\newcommand*{\wh}{\widehat}
\newcommand*{\wt}{\widetilde}
\newcommand*{\ol}{\overline}
\newcommand*{\eps}{\varepsilon}
\newcommand*{\N}{\mathbb{N}}
\newcommand*{\R}{\mathbb{R}}
\newcommand*{\IZ}{\mathbb{Z}}
\newcommand*{\Znn}{\bbN_0}
\newcommand*{\bbN}{\mathbb N}
\newcommand*{\bbR}{\mathbb R}
\newcommand*{\cA}{\mathcal{A}}
\newcommand*{\cB}{\mathcal{B}}
\newcommand*{\cF}{\mathcal{F}}
\newcommand*{\loc}{\mathrm{loc}}
\newcommand*{\ucp}{\mathrm{ucp}}
\newcommand*{\Law}{\operatorname{Law}}
\newcommand{\be}{\begin{eqnarray*}}
\newcommand{\ee}{\end{eqnarray*}}
\newcommand{\ben}{\begin{eqnarray}}
\newcommand{\een}{\end{eqnarray}}
\newcommand{\bi}{\begin{itemize}}
\newcommand{\ei}{\end{itemize}}
\newtheorem{theo}{Theorem}[section]
\newtheorem{lemma}[theo]{Lemma}
\newtheorem{propo}[theo]{Proposition}
\newtheorem{corollary}[theo]{Corollary}
\theoremstyle{definition}
\newtheorem{ex}[theo]{Example}
\newtheorem{remark}[theo]{Remark}
\newcounter{numpar}[section]
\title{A functional limit theorem for\\ coin tossing Markov chains}
\author{Stefan Ankirchner\thanks{%
Stefan Ankirchner, Institute of Mathematics, University of Jena, Ernst-Abbe-Platz 2, 07745 Jena, Germany. \emph{Email:} s.ankirchner@uni-jena.de, \emph{Phone:} +49 (0)3641 946275.
}
\and Thomas Kruse\thanks{Institute of Mathematics, University of Gie{\ss}en, Arndtstr.~2, 35392 Gießen, Germany.
\emph{Email:} thomas.kruse@math.uni-giessen.de, \emph{Phone:} +49 (0)641 9932102.}
\and Mikhail Urusov\thanks{%
Mikhail Urusov, Faculty of Mathematics, University of Duisburg-Essen, Thea-Leymann-Str.~9, 45127 Essen, Germany.
\emph{Email:} mikhail.urusov@uni-due.de, \emph{Phone:} +49 (0)201 1837428.
}}
\begin{document}

\maketitle

\begin{abstract}
We prove a functional limit theorem for Markov chains that, in each step, move up or down by a possibly state dependent constant with probability $1/2$, respectively. The theorem entails that the law of every one-dimensional
regular continuous strong Markov process in natural scale can be approximated with such Markov chains arbitrarily well.
The functional limit theorem applies, in particular, to Markov processes that cannot be characterized as solutions to stochastic differential equations. 
Our results allow to practically approximate such processes with irregular behavior;
we illustrate this with Markov processes exhibiting sticky features, e.g., sticky Brownian motion and a Brownian motion slowed down on the Cantor set.   

\smallskip
\emph{Keywords:} one-dimensional Markov process;
speed measure;
Markov chain approximation;
functional limit theorem;
sticky Brownian motion; sticky reflection;
slow reflection;
Brownian motion slowed down on the Cantor set.

\smallskip
\emph{2010 MSC:}
Primary: 60F17; 60J25; 60J60.
Secondary: 60H35; 60J22.
\end{abstract}

\section*{Introduction}

Let $(\xi_k)_{k \in \bbN}$ be an iid sequence of random variables, on a probability space with a measure $P$, satisfying $P(\xi_1 = \pm 1) = \frac12$. 
Given $y\in \R$, $h \in (0,\infty)$ and a function $a_h:\R \to \R$, 
we denote by $(X^h_{kh})_{k \in \bbN_0}$ the Markov chain defined by
\begin{equation}\label{eq:def_X_intro}
X^h_0 = y
\quad\text{and}\quad
X^h_{(k+1)h} = X^h_{kh}+ a_h(X^h_{kh}) \xi_{k+1}, \quad \text{ for } k \in \bbN_0. 
\end{equation}
We choose as the Markov chain's index set the set of non-negative multiples of $h$ because we interpret $h$ as the length of a time step. We extend $(X^h_{kh})_{k \in \bbN_0}$ to a continuous-time process by linear interpolation, i.e., we set
\begin{equation}\label{eq:lin_interpo_intro}
X^h_t = X^h_{\lfloor t/h \rfloor h} + (t/h - \lfloor t/h \rfloor) (X^h_{(\lfloor t/h \rfloor +1)h} - X^h_{\lfloor t/h \rfloor h}), \qquad t\in[0,\infty). 
\end{equation}
Let $\ol h \in (0,\infty)$ and let $(a_h)_{h \in (0,\ol h)}$ be a family of real functions and $(X^h)_{h \in (0,\ol h)}$ the associated family of extended Markov chains defined as in \eqref{eq:lin_interpo_intro}. A fundamental problem of probability theory  is to find conditions on $(X^h)_{h \in (0,\ol h)}$ such that  the laws of the processes $X^h$, $h \in (0,\ol h)$, converge in some sense as $h\to 0$.  In this article we provide an asymptotic condition on the family $(a_h)_{h \in (0,\ol h)}$ guaranteeing that the laws of the processes $X^h$, $h \in (0,\ol h)$, converge as $h\to 0$ to the law of a one-dimensional
regular continuous strong Markov process (in the sense of Section~VII.3 in \cite{RY} or Section~V.7 in \cite{RogersWilliams}). In what follows we use the term
{\itshape general diffusions} for the latter class of processes. 
Recall that a general diffusion $Y = (Y_t)_{t \in [0,\infty)}$ has a state space that is an open, half-open or closed interval $I \subseteq \R$. We denote by $I^\circ=(l,r)$ the interior of $I$, where $-\infty\leq l<r\leq \infty$. 
Moreover, the law of any general diffusion is uniquely characterized by its speed measure $m$ on $I$, its scale function and its boundary behavior. 
Throughout the introduction we assume that $Y$ is in natural scale and that 
every accessible boundary point is absorbing (see the beginning of Section \ref{sec:MC approx} and Section~\ref{sec:reflecting} on how to incorporate 
diffusions in general scale
and with reflecting boundary points).
This setting covers, in particular, solutions of driftless SDEs with discontinuous  and fast growing diffusion coefficient
(see Section~\ref{sec:sdes}) and also diffusions with sticky features (see Section~\ref{sec:examples}),
which cannot be modeled by SDEs whenever a sticky point is located in the interior of the state space.

Our main result, Theorem~\ref{main thm sec 1}, shows that
if a family of functions $(a_h)_{h\in (0,\ol h)}$ satisfies for all $y\in I^\circ$, $h\in (0, \ol h)$ the equation 
\begin{equation}\label{eq:charac_sf_intro}
\frac{1}{2}\int_{(y-a_h(y),y+a_h(y))}(a_h(y)-|u-y|)\, m(du)=h,
\end{equation}
with a precision of order $o(h)$ uniformly in $y$
over compact subsets of $I^\circ$
(see Condition~(A) below for a precise statement), then
the associated family $(X^h)_{h\in (0,\ol h)}$ converges in distribution, as $h\to 0$, to the general diffusion $Y$ with speed measure $m$.
We show that for every general diffusion a family of functions $(a_h)_{h\in (0,\ol h)}$
satisfying \eqref{eq:charac_sf_intro} exists implying that every general diffusion can be approximated by a Markov chain of the form \eqref{eq:def_X_intro}. Equation~\eqref{eq:charac_sf_intro} dictates how to compute the functions $(a_h)_{h\in (0,\ol h)}$ and therefore paves the way to approximate the distribution of a general diffusion numerically (see, e.g., Section \ref{sec:cantor_bm}). 

The central idea in the derivation of Equation~\eqref{eq:charac_sf_intro} is to embed for every $h \in (0,\ol h)$ the Markov chain $(X^h_{kh})_{k \in \bbN_0}$ into $Y$ with a sequence of stopping times. To explain this idea assume for the moment that the state space is $I=\R$. For every 
$h \in (0,\ol h)$ let $\tau^h_0=0$ and then
recursively define $\tau^h_{k+1}$ as the first time $Y$ exits the interval $(Y_{\tau^h_k}-a_h(Y_{\tau^h_k}), Y_{\tau^h_k}+a_h(Y_{\tau^h_k}))$ after  $\tau^h_{k}$.
It follows that the discrete-time process $(Y_{\tau^h_k})_{k\in \N_0}$ has the same law as the Markov chain $(X^h_{kh})_{k\in \N_0}$. 
Instead of controlling now directly the spatial errors $|Y_{\tau^h_k}-Y_{kh}|$,
we first analyze the temporal errors $|\tau^h_k-kh|$, $k\in \N_0$. 
We show that for every $y\in \R$, $a\in [0,\infty)$ the expected time it takes $Y$ started in $y$ to leave the interval 
$(y-a,y+a)$ is equal to $\frac{1}{2}\int_{(y-a,y+a)}(a-|u-y|)\, m(du)$. In particular,
if $a_h$ satisfies \eqref{eq:charac_sf_intro} for all $y\in I$, it 
follows that for all $k\in \N_0$ the time lag $\tau^h_{k+1}-\tau^h_k$ between two consecutive stopping times is in expectation equal to $h$.
In this case we refer to $(X^h_{kh})_{k\in \N_0}$ as Embeddable Markov Chain with
Expected time Lag $h$ (we write shortly $(X^h_{kh})_{k\in \N_0} \in \text{EMCEL}(h)$). 

For some diffusions $Y$ one can construct EMCEL approximations explicitly (see, e.g., Section~\ref{sec:examples}). 
For cases where \eqref{eq:charac_sf_intro} cannot be solved in closed form, we perform a perturbation analysis
and show that it suffices to find for all $h\in \ol h$, $y\in I^\circ$ a number $a_h(y)$ satisfying \eqref{eq:charac_sf_intro} with an error of order $o(h)$ uniformly in $y$ belonging to compact subsets of $I^\circ$.
We prove that for the associated stopping times $(\tau^h_k)_{k\in \N_0}$ the temporal errors
$|\tau^h_k-kh|$, $k\in \N_0$, converge to $0$ as $h\to 0$ in every $L^\alpha$-space, $\alpha\in [1,\infty)$. This 
 ultimately implies convergence 
 of $(X^h)_{h\in (0,\ol h)}$ to $Y$ in distribution as $h\to 0$.

To illustrate the benefit of the perturbation analysis,
we construct in Section~\ref{sec:cantor_bm}
approximations for a Brownian motion
slowed down on the Cantor set (see Figure~\ref{fig:cbm}).
Moreover, we note that our main result, Theorem~\ref{main thm sec 1},
is not only applicable to perturbations of the EMCEL approximation
but can also be used to derive new convergence results
for other approximation methods such as, e.g., weak Euler schemes
(see Corollary~\ref{cor:euler}).

The idea to use embeddings in order to prove a functional limit theorem goes back to Skorokhod. In the seminal book \cite{skoro} scaled random walks are embedded into Brownian motion in order to prove Donsker's invariance principle. In \cite{aku-aihp} we embed Markov chains into the solution process of an SDE and prove a functional limit theorem where the limiting law is that of the SDE. 
In \cite{skoro} and \cite{aku-aihp} the approximating Markov chains have to be embeddable with a sequence of stopping times $(\tau_k)_{k \in \mathbb{N}_0}$ such that the expected distance between two consecutive stopping times is {\itshape exactly} equal to $h$, the time discretization parameter. In contrast, in the present article we require that the expected distance between consecutive embedding stopping times is only approximately equal to $h$. We show that for the convergence of the laws it is sufficient to require that the difference of the expected distance and $h$ is of the order $o(h)$. 
Moreover, compared to \cite{aku-aihp}, we allow for a larger class of limiting distributions. Indeed, our setting includes processes that cannot be characterized as the solution of an SDE, e.g., diffusions with sticky points.

\smallskip
There are further articles in the literature using random time grids to approximate a Markov process, under the additional assumption that it solves a one-dimensional SDE. 
In \cite{EL} the authors first fix a finite grid in the state space of the diffusion. Then they construct a Bernoulli random walk on this grid that
can be embedded into the diffusion. The authors determine the 
expected time for attaining one of the neighboring points by solving a PDE. 

\cite{milstein2015uniform} describes a similar approximation method for the 
Cox-Ingersoll-Ross (CIR) process. Also here the authors first fix a grid 
on $[0,\infty)$ and then construct a random walk on the grid that can 
be embedded into the CIR process. In contrast to \cite{EL}, the authors in 
\cite{milstein2015uniform} compute the distributions of the embedding stopping times (and 
not only their expected value) by solving a parabolic PDE. In the numerical 
implementation of the scheme the authors then draw the random time
increments from these distributions and thereby obtain a scheme that is
exact along a sequence of stopping times.
Note that in contrast to \cite{EL} and \cite{milstein2015uniform}, in our approach 
the space grid is not fixed a priori. Instead, we approximately fix the expected time lag between the consecutive embedding stopping times.

Yet a further scheme that uses a random time partition to approximate a
diffusion $Y$ with discontinuous coefficients is suggested in \cite{LLP2019}. 
In contrast to our approach the distribution of the time increments is fixed there.
More precisely,
the authors of \cite{LLP2019} use the fact that the distribution of $Y$ sampled at an independent
exponential random time is given by the resolvent of the process. 
Consequently, if it is possible to generate random variables 
distributed according to the resolvent kernel,
one obtains an exact simulation of $Y$ at an exponentially distributed time.
Iterating this procedure and letting the parameter
of the exponential distribution go to infinity provides an approximation of~$Y$.

We remark that embeddings along random time grids have been recently employed in \cite{GLL2} and \cite{GLL2020} in order to obtain convergence rates of (F)BSDE approximations driven by Bernoulli increments.

\smallskip
Recall that, while approximating solutions of SDEs on deterministic time grids
usually employs Euler-type schemes~\eqref{eq:def_X_intro} with Gaussian increments $(\xi_k)_{k\in\bbN}$,
we use Bernoulli increments in our paper.
In this connection, we would like to mention that, from the numerical perspective,
convergence results along equidistant time grids,
including approximations by the weak Euler schemes with Bernoulli increments,
can be found, e.g., in Section~14.1 of \cite{KloedenPlaten}.
From the more theoretical perspective, we refer to
Theorem~7.4.1 in \cite{EthierKurtz} and Theorem~IX.4.8 in \cite{JacodShiryaev},
which are some general functional limit theorems of the Trotter-Kato type for approximating diffusions.
A discussion of how to approximate controlled diffusions by Markov chains with Bernoulli increments can be found in \cite{KushnerDupuis}.
Another perspective on schemes with Bernoulli increments is suggested in
\cite{BSWOHRKK} and \cite{GHMR:19},
where, on certain machines (like field programmable gate arrays),
such schemes are shown to be more efficient for simulation algorithms.

While in our paper a continuous-time Markov process is approximated via (linearly-interpolated) \emph{discrete-time} Markov chains,
there is an alternative approach, pioneered in \cite{Stone63},
where the approximating processes are themselves \emph{continuous-time} Markov processes.
For a recent account, see \cite{BouVanden2018} and references therein.
A generalization of the latter approach for variable-speed random walks on trees contained in \cite{ALW:17} is worth mentioning as well.

\bigskip
The article is organized as follows. In Section~\ref{sec:MC approx} we rigorously formulate and discuss the functional limit theorem. In Section~\ref{sec:sdes} we discuss some of its implications for diffusions that can be described as solution of SDEs. In Sections \ref{sec 2} and~\ref{sec:higher moments} we explain, for a given general diffusion, how to embed an approximating coin tossing Markov chain into the diffusion and prove some properties of the embedding stopping times. Section~\ref{sec fctal lim thm} provides the proof of the functional limit theorem, where we, in particular, need the material discussed in Sections \ref{sec 2} and~\ref{sec:higher moments}. The functional limit theorem is shown under the additional assumption that if a boundary point is attainable, then it is absorbing. In Section~\ref{sec:reflecting} we explain how one can extend the functional limit theorem to general diffusions with reflecting boundary points. In the last two sections we illustrate our main result with diffusions exhibiting some stickiness. In Section~\ref{sec:examples} we construct coin tossing Markov chains approximating sticky Brownian motion, with and without reflection, respectively. In Section~\ref{sec:cantor_bm} we first describe a Brownian motion that is slowed down on the Cantor set, and secondly we explicitly construct coin tossing Markov chains that approximate this process arbitrarily well.

\section{Approximating general diffusions with Markov chains}\label{sec:MC approx}

Let $(\Omega, \cF, (\cF_t)_{t \ge 0}, (P_y)_{y \in I}, (Y_t)_{t \ge 0})$ be a one-dimensional continuous strong Markov process in the sense of Section~VII.3 in \cite{RY}. We refer to this class of processes as {\itshape general diffusions} in the sequel. We assume that the state space is an open, half-open or closed interval $I \subseteq \R$. We denote by $I^\circ=(l,r)$ the interior of $I$, where $-\infty\leq l<r\leq \infty$, and we set $\ol I=[l,r]$.
Recall that by the definition we have $P_y[Y_0=y]=1$ for all $y\in I$. 
We further assume that $Y$ is regular. This means that for every $y\in I^\circ$ and $x\in I$ we have that $P_y[H_x(Y)<\infty]>0$, where $H_x(Y)=\inf\{t\geq 0: Y_t=x \}$.
If there is no ambiguity,
we simply write $H_x$ in place of $H_x(Y)$. Moreover, for $a<b$ in $\ol I$ we denote by $H_{a,b}=H_{a,b}(Y)$
the first exit time of $Y$ from $(a,b)$,
i.e.\ $H_{a,b} = H_a\wedge H_b$.
Without loss of generality
we suppose that the diffusion $Y$ is in natural scale. If $Y$ is not in natural scale, then there exists a strictly increasing continuous function $s:I \to \R$, the so-called scale function, such that $s(Y_t)$, $t\geq 0$, is in natural scale. 
Let $m$ be the speed measure of the Markov process $Y$
(see VII.3.7 and~VII.3.10 in \cite{RY}).\footnote{\label{fn:conv_sm}There are different conventions concerning the normalization of the speed measure. We follow the convention of \cite{RY} and \cite{BS2002} and note that our speed measure is thus twice as large as the one for example found in \cite{RogersWilliams}.}
Recall that for all $a<b$ in $I^\circ$ we have
\begin{equation}\label{eq:06072018a1}
0<m([a,b])<\infty.
\end{equation}
Finally,
we also assume that if a boundary point is accessible, then it is absorbing. We drop this assumption in Section~\ref{sec:reflecting}, where we extend our approximation method to Markov processes with reflecting boundaries.
The extension works for both instantaneous and slow reflection.

Let $\ol h \in (0,\infty)$ and suppose that for every $h \in (0, \ol h)$ we are given a measurable function $a_{h}\colon \ol I \to [0,\infty)$ such that $a_h(l)=a_h(r)=0$ and for all $y\in I^\circ$ we have $y\pm a_h(y)\in I$. We refer to each function $a_h$ as a scale factor.
We next construct a sequence of Markov chains associated to the family of scale factors $(a_h)_{h\in (0, \ol h)}$. To this end
we fix a starting point $y \in I^\circ$ of $Y$.
Let $(\xi_k)_{k \in \bbN}$ be an iid sequence of random variables,
on a probability space with a measure $P$,
satisfying $P(\xi_k = \pm 1) = \frac12$. 
We denote by $(X^h_{kh})_{k \in \bbN_0}$ the Markov chain defined by
\begin{equation}\label{eq:def_X}
X^h_0 = y
\quad\text{and}\quad
X^h_{(k+1)h} = X^h_{kh}+ a_h(X^h_{kh}) \xi_{k+1}, \quad \text{ for } k \in \bbN_0. 
\end{equation}
We extend $(X^h_{kh})_{k \in \bbN_0}$ to a continuous-time process by linear interpolation, i.e., for all $t\in[0,\infty)$, we set
\begin{equation}\label{eq:13112017a1}
X^h_t = X^h_{\lfloor t/h \rfloor h} + (t/h - \lfloor t/h \rfloor) (X^h_{(\lfloor t/h \rfloor +1)h} - X^h_{\lfloor t/h \rfloor h}). 
\end{equation}
To highlight the dependence of $X^h=(X^h_t)_{t\in[0,\infty)}$ on the starting point $y\in I^\circ$ we also sometimes write~$X^{h,y}$.


To formulate our main result we need the following condition.

\smallskip\noindent
\textbf{Condition~(A)}
For all compact subsets $K$ of $I^\circ$ it holds that
\begin{equation}
\sup_{y\in K }  \left|
\frac{1}{2}\int_{(y-a_h(y),y+a_h(y))} (a_h(y)-|u-y|)\,m(du)
-h
\right|
\in o(h), \quad h \to 0.
\end{equation}


\begin{theo}\label{main thm sec 1}
Assume that Condition~(A) is satisfied. Then, for any $y\in I^\circ$,
the distributions of the processes
$(X^{h,y}_{t})_{t \in [0,\infty)}$ under $P$
converge weakly to the distribution of
$(Y_t)_{t \in [0,\infty)}$ under $P_{y}$, as $h \to 0$;
i.e., for every bounded and continuous functional\footnote{As usual,
we equip $C([0,\infty),\bbR)$ with the topology
of uniform convergence on compact intervals,
which is generated, e.g., by the metric
$$
d(x,y)=\sum_{n=1}^\infty 2^{-n}
\left(\|x-y\|_{C[0,n]}\wedge1\right),
\quad x,y\in C([0,\infty),\bbR),
$$
where $\|\cdot\|_{C[0,n]}$ denotes the sup norm
in $C([0,n],\bbR)$.}
$F\colon C([0,\infty),\bbR)\to\bbR$,
it holds that
\ben\label{eq:13112017a2}
E[F(X^{h,y})]\to E_y[F(Y)], \quad h\to 0.
\een
\end{theo}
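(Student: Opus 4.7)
The plan is to realize each $X^{h,y}$ jointly with $Y$ by a Skorokhod-type embedding and then deduce uniform closeness of paths from closeness of temporal grids. Fix $y \in I^\circ$ and work under $P_y$. Define recursively $\tau^h_0 = 0$ and
\[
\tau^h_{k+1} = \inf\{t \geq \tau^h_k : |Y_t - Y_{\tau^h_k}| \geq a_h(Y_{\tau^h_k})\},
\]
with the convention $\tau^h_{k+1} = \tau^h_k$ when $a_h(Y_{\tau^h_k}) = 0$. Because $Y$ is continuous and in natural scale, the exit distribution from the symmetric interval $(z - a_h(z),\, z+a_h(z))$ puts mass $1/2$ on each endpoint, so the strong Markov property yields that $(Y_{\tau^h_k})_{k \in \bbN_0}$ under $P_y$ has the same law as $(X^{h,y}_{kh})_{k \in \bbN_0}$ under $P$. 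Linearly interpolating the embedded skeleton as in \eqref{eq:13112017a1} produces a continuous-time process $\wt X^{h,y}$ with the same law as $X^{h,y}$, so it is enough to prove that, for every $T>0$,
\[
\sup_{t\in[0,T]} \bigl|\wt X^{h,y}_t - Y_t\bigr| \longrightarrow 0 \text{ in } P_y\text{-probability as } h\to 0;
\]
the assertion \eqref{eq:13112017a2} then follows by bounded convergence.

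The key tool for the temporal analysis is the mean exit-time identity $E_z[H_{z-a,z+a}] = \tfrac12 \int_{(z-a,z+a)} (a-|u-z|)\,m(du)$ stated in the introduction. Substituting $a = a_h(z)$ and invoking Condition~(A) gives, uniformly for $z$ in any compact $K \subset I^\circ$, the per-step estimate $E[\Delta^h_k \mid \mathcal{F}_{\tau^h_k}] = h + o(h)$ as $h \to 0$, where $\Delta^h_k := \tau^h_{k+1} - \tau^h_k$. Decomposing
\[
\tau^h_k - kh = \sum_{j=0}^{k-1}\bigl(\Delta^h_j - E[\Delta^h_j \mid \mathcal{F}_{\tau^h_j}]\bigr) + \sum_{j=0}^{k-1}\bigl(E[\Delta^h_j \mid \mathcal{F}_{\tau^h_j}] - h\bigr),
\]
the first sum is a martingale whose quadratic variation is bounded by $k$ times the $L^2$ estimate $E[(\Delta^h_j)^2 \mid \mathcal{F}_{\tau^h_j}] = O(h^2)$ (which, together with its higher-moment analogues, is to be provided by Sections~\ref{sec 2} and~\ref{sec:higher moments}), while the second is controlled pathwise by $k \cdot o(h)$. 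For $k \leq T/h$, Doob's maximal inequality then yields $\eta^h_T := \sup_{k \leq T/h}|\tau^h_k - kh| \to 0$ in $L^\alpha(P_y)$ for every $\alpha \in [1,\infty)$. A localization at the first exit of $Y$ from a large compact subinterval $[l',r'] \subset I^\circ$ is needed in order for Condition~(A) to act uniformly; the localization is later removed by continuity of $Y$ together with $[l',r'] \uparrow I^\circ$.

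To pass from the temporal bound to uniform path closeness, I first observe that Condition~(A) forces $a_h \to 0$ uniformly on every compact subset of $I^\circ$: if $a_h(z) \geq \varepsilon$ along some sequence inside a compact set, then the integral on the left of Condition~(A) would stay bounded below by a positive constant (using~\eqref{eq:06072018a1}), contradicting its $o(h)$ size. Combining $\sup_K a_h \to 0$ with $\eta^h_T \to 0$ in probability and the a.s.\ uniform continuity of $Y$ on $[0, T+1]$, the elementary estimate
\[
\sup_{t\in[0,T]}\bigl|\wt X^{h,y}_t - Y_t\bigr| \leq 2\sup_{k:\, \tau^h_k \leq T+\eta^h_T} a_h(Y_{\tau^h_k}) + \omega_Y\bigl(\eta^h_T;\, [0,T+\eta^h_T]\bigr),
\]
where $\omega_Y(\delta; J)$ denotes the modulus of continuity of $Y$ on $J$, delivers the required uniform convergence in probability. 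I expect the main obstacle to be the localization step: one must simultaneously guarantee that the embedded chain $(Y_{\tau^h_k})_{k \leq T/h}$ stays inside $[l',r']$ with high probability, that Condition~(A) then applies uniformly along the entire discrete path, and that the boundary convention $a_h(l)=a_h(r)=0$ is compatible with the law identity between $\wt X^{h,y}$ and $X^{h,y}$ once trajectories approach an absorbing endpoint.
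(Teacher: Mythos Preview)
Your overall strategy matches the paper's: embed via symmetric exit times, decompose $\tau^h_k-kh$ into a martingale plus predictable part, control the former via BDG and the latter via Condition~(A), then convert the temporal bound into uniform path closeness through the modulus of continuity of~$Y$. You have also correctly located the real obstacle---the circularity in the localization---but you have not resolved it, and the resolution is not a routine detail. The paper does \emph{not} localize by stopping $Y$ at the exit from $[l',r']$; instead, for each compact $K_n\subset I^\circ$ it \emph{modifies the scale factors}, setting $\wt a^n_h=a_h$ on $K_n$ and $\wt a^n_h=\wh a_h$ (the exact EMCEL factors of~\eqref{sf absorbing case}) on $\ol I\setminus K_n$. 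Since $\wh a_h$ satisfies the integral identity exactly, the modified family obeys the stronger global condition~\eqref{eq:26112018a2}, and Corollary~\ref{cor:26112018a1} then delivers $\sup_{k\le T/h}|\wt\tau^{n,h}_k-kh|\to0$ in $L^\alpha$ with \emph{no} prior knowledge of where the embedded chain lives. This breaks your circle: the time bound for the modified chain is unconditional, and on the high-probability event that $Y$ does not leave $K_n$ before time $T+1$ the modified and original embeddings coincide on $[0,T]$.

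There is a second device you are missing. When $Y_{\tau^h_k}$ lies so close to an accessible boundary that $Y_{\tau^h_k}\pm a_h(Y_{\tau^h_k})\in\{l,r\}$ and the expected exit time drops below~$h$, the paper deterministically \emph{extends} the stopping time on the event of exit through the boundary so that $E[\tau^h_{k+1}-\tau^h_k\mid\cF_{\tau^h_k}]=h$ exactly there (see~\eqref{eq:def_tau} and Proposition~\ref{main thm sec 2}). This forces the predictable part to vanish identically outside the set $I_h$, which is why the supremum in Proposition~\ref{theo:conv_A} runs only over~$I_h$ and why the right-hand side of~\eqref{eq:lalphamart} can be controlled. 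Your raw first-exit construction preserves the law identity, but without the extension the steps near an absorbing boundary contribute uncontrolled (possibly order-$h$, not $o(h)$) terms to your predictable sum $\sum_j(E[\Delta^h_j\mid\cF_{\tau^h_j}]-h)$, since Condition~(A) says nothing about those points.
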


\begin{remark}\label{rem:03012020a1}
To better explain Condition~(A), for every $\alpha<\beta$ in $I$, we introduce the Green function $G_{\alpha,\beta}\colon[\alpha,\beta]^2\to\bbR$ of $Y$ by the formula
$$
G_{\alpha,\beta}(u,v)=\frac{(\beta-u\vee v)(u\wedge v-\alpha)}{\beta-\alpha}, \quad u,v\in [\alpha,\beta]
$$
(recall that $Y$ is in natural scale) and observe that, for all $y\in[\alpha,\beta]$,
\begin{equation}\label{eq:03012020a1}
E_y[H_{\alpha,\beta}(Y)]=\int_{(\alpha,\beta)}G_{\alpha,\beta}(y,u)\,m(du)
\end{equation}
(see, e.g., Section~VII.3 in \cite{RY}).
It follows that, for any $y\in I^\circ$ and $a>0$ such that $y\pm a\in I$, it holds
\begin{align}
&E_y[H_{y-a,y+a}(Y)]
\notag\\
&=\int_{(y-a,y)}\frac12 (u-y+a)\,m(du)
+\int_{\{y\}}\frac12 a\,m(du)
+\int_{(y,y+a)}\frac12 (y+a-u)\,m(du)
\notag\\
&=\frac12\int_{(y-a,y+a)}(a-|u-y|)\,m(du).
\label{eq:03012020a3}
\end{align}
Thus, Condition~(A) is an analytic condition that is equivalent to requiring that the scale factors $(a_h)_{h\in(0,\ol h)}$ satisfy
$$
\sup_{y\in K }  \left|
E_y[H_{y-a_h(y),y+a_h(y)}(Y)]-h
\right|
\in o(h), \quad h \to 0,
$$
for any compact subset $K$ of $I^\circ$.
\end{remark}

\begin{remark}
It is worth noting that Condition~(A) is, in fact, nearly necessary
for weak convergence~\eqref{eq:13112017a2}
(see Example~\ref{ex:14122018a1}).
\end{remark}

\begin{remark}
For all $y\in I^\circ$, $h\in (0,\ol h)$ it holds that
$$
\int_{(y-a_h(y),y+a_h(y))} (a_h(y)-|u-y|)\,m(du)
=
\int_{I} (a_h(y)-|u-y|)^+\,m(du).
$$
This yields an alternative representation of Condition (A) which is occasionally used below.
\end{remark}

It is important to note that for every speed measure $m$ there exists a family of scale factors such that Condition~(A) is satisfied and hence every general diffusion $Y$ can be approximated
by Markov chains of the form~\eqref{eq:def_X}. Indeed, for all $y\in I^\circ$, $h\in (0,\ol h)$ let $\wh a_h(l)=\wh a_h(r)=0$ and
\ben\label{sf absorbing case}
\wh a_h(y) = \sup\left\{a \ge 0: y\pm a \in I \text{ and } \frac{1}{2}\int_{(y-a,y+a)} (a-|z-y|)\,m(dz) \le h\right\}
\een
and denote by $(\wh X^h)_{h\in (0,\ol h)}$ the associated family of processes defined in \eqref{eq:def_X} and \eqref{eq:13112017a1}. 
Then the proof of Corollary~\ref{cor:emcel} below shows that for all compact subsets $K$ of $I^\circ$ there exists $h_0 \in (0,\ol h)$ such that for all $y\in K$, $h\in (0,h_0)$ it holds that
$$
\frac{1}{2}\int_{(y-\wh a_h(y),y+\wh a_h(y))} (\wh a_h(y)-|z-y|)\,m(dz) = h.
$$
In particular, the family $(\wh a_h)_{h\in (0,\ol h)}$ satisfies Condition~(A) and 
we show in Section~\ref{sec 2} below that the Markov chain $(\wh X^h_{kh})_{k\in \N_0}$ is embeddable into $Y$ with a sequence of stopping times with expected time lag $h$. We refer to $(\wh X^{h}_{t})_{t \in [0,\infty)}$, $h\in (0,\ol h)$, as EMCEL approximations and write shortly $(\wh X^h_{kh})_{k\in \N_0} \in \text{EMCEL}(h)$.

\begin{corollary}\label{cor:emcel}
For every $y\in I^\circ$
the distributions of the EMCEL approximations
$(\wh X^{h,y}_{t})_{t \in [0,\infty)}$ under $P$
converge weakly to the distribution of
$(Y_t)_{t \in [0,\infty)}$ under $P_{y}$ as $h \to 0$.
\end{corollary}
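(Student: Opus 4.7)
The plan is to deduce the corollary directly from Theorem~\ref{main thm sec 1} by verifying that the family $(\wh a_h)_{h\in(0,\ol h)}$ defined by~\eqref{sf absorbing case} satisfies Condition~(A). In fact, I will establish the stronger statement that on every compact $K\subset I^\circ$ and for all sufficiently small $h$, the integral appearing in Condition~(A) evaluated at $a_h=\wh a_h$ equals $h$ exactly, so the $o(h)$ error is identically zero for such $h$ and $y\in K$.

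The key preparatory step is to analyze the auxiliary function
\[
\ffi_y(a) := \frac12\int_I (a-|z-y|)^+\, m(dz), \qquad y\in I^\circ,\ a\in[0,\infty),
\]
which is precisely the integral appearing in~\eqref{sf absorbing case}. Clearly $\ffi_y$ is non-decreasing in $a$ with $\ffi_y(0)=0$; since the integrand is $1$-Lipschitz in $a$ and, for bounded $a$, supported in a bounded neighborhood of $y$, dominated convergence shows that $(y,a)\mapsto \ffi_y(a)$ is jointly continuous on $I^\circ\times[0,\infty)$. Moreover, by bounding the integrand below by $a/2$ on $[y-a/2, y+a/2]$ and invoking~\eqref{eq:06072018a1}, one sees that $\ffi_y(a)>0$ whenever $a>0$ and $[y-a,y+a]\subset I^\circ$.

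Now fix a compact $K\subset I^\circ$. Since $I^\circ$ is open, there exists $\delta>0$ with $[y-\delta,y+\delta]\subset I^\circ$ for every $y\in K$. By joint continuity of $\ffi_\cdot(\delta)$, compactness of $K$, and the positivity just observed, $c_K:=\inf_{y\in K}\ffi_y(\delta)>0$. For any $h\in(0,c_K)$ and $y\in K$, the set $\{a\ge 0:y\pm a\in I,\ \ffi_y(a)\le h\}$ is contained in $[0,\delta)$ because $\ffi_y(\delta)>h$; in particular, the admissibility constraint $y\pm a\in I$ is inactive, and~\eqref{sf absorbing case} reduces to $\wh a_h(y)=\sup\{a\ge 0:\ffi_y(a)\le h\}$. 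By continuity and monotonicity of $\ffi_y$, this supremum is attained and satisfies $\ffi_y(\wh a_h(y))=h$. Hence Condition~(A) holds with error zero for all $h\in(0,c_K)$ and $y\in K$, and Theorem~\ref{main thm sec 1} immediately yields the conclusion. The only real subtlety in this argument is the uniform lower bound $c_K>0$, which is what prevents the state-space boundary from interfering with the defining equation for $\wh a_h(y)$; once this is secured, no perturbation analysis is required.
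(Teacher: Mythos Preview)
Your proof is correct and follows essentially the same route as the paper's: fix a compact $K\subset I^\circ$, choose a uniform margin $\delta>0$ (the paper calls it $a_0$) so that $[y-\delta,y+\delta]\subset I^\circ$ for all $y\in K$, use continuity of $y\mapsto\ffi_y(\delta)$ together with the positivity guaranteed by~\eqref{eq:06072018a1} to get a uniform lower bound $c_K>0$, and then argue that for $h<c_K$ the supremum in~\eqref{sf absorbing case} is attained with $\ffi_y(\wh a_h(y))=h$. The only cosmetic difference is that the paper invokes \emph{strict} monotonicity of $a\mapsto\ffi_y(a)$ on $[0,\delta]$ (which is immediate from~\eqref{eq:06072018a1}), whereas you rely on continuity plus non-decreasingness; both routes yield $\ffi_y(\wh a_h(y))=h$ equally well.
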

\begin{proof}
Let $K$ be a compact subset of $I^\circ$. Without loss of generality assume that
$K=[l_0,r_0]$ with $l<l_0<r_0<r$. Let $a_0=\frac{r-r_0}{2}\wedge\frac{l_0-l}{2}\wedge 1$. It follows with dominated convergence that the function
$$
K\ni y\mapsto \frac{1}{2}\int_{I} (a_0-|u-y|)^+\,m(du)\in (0,\infty)
$$
is continuous. In particular, it is bounded away from zero, i.e., there exists $h_0 \in (0,\ol h)$ such that for all $y\in K$ it holds that
$\frac{1}{2}\int_{I} (a_0-|u-y|)^+\,m(du)\ge h_0$.
Next observe that for all $y\in K$ the function
$[0,a_0]\ni a \mapsto \frac{1}{2}\int_{I} (a-|u-y|)^+\,m(du) \in [0,\infty)$ is 
continuous and
strictly increasing. Hence for all $y\in K$, $h\in (0,h_0)$ the supremum in~\eqref{sf absorbing case} is a maximum and it holds that
$
\frac{1}{2}\int_{(y-\wh a_h(y),y+\wh a_h(y))} (\wh a_h(y)-|u-y|)\,m(du) = h.
$
In particular, Condition~(A) is satisfied and the statement of Corollary~\ref{cor:emcel} follows from Theorem~\ref{main thm sec 1}.
\end{proof}


\section{Application to SDEs}\label{sec:sdes}
A particular case of our setting is the case,
where $Y$ is a solution to the driftless SDE
\begin{equation}\label{eq:27092018a1}
dY_t=\eta(Y_t)\,dW_t,
\end{equation}
where $\eta\colon I^\circ\to\bbR$ is a Borel function
satisfying the Engelbert-Schmidt conditions
\begin{gather}
\eta(x)\ne0\;\;\forall x\in I^\circ,
\label{eq:27092018a2}\\[1mm]
\eta^{-2}\in L^1_{\loc}(I^\circ)
\label{eq:27092018a3}
\end{gather}
($L^1_{\loc}(I^\circ)$ denotes the set of Borel functions
locally integrable on~$I^\circ$).
Under \eqref{eq:27092018a2}--\eqref{eq:27092018a3}
SDE~\eqref{eq:27092018a1}
has a unique in law weak solution
(see \cite{ES1985} or Theorem~5.5.7 in \cite{KS}).
This means that there exists a pair of processes $(Y,W)$ on a filtered probability space $(\Omega, \cF, (\cF_t), P)$, with $(\cF_t)$ satisfying the usual conditions, such that $W$ is an $(\cF_t)$-Brownian motion and $(Y,W)$ satisfies SDE~\eqref{eq:27092018a1}.
The process $Y$ possibly reaches the endpoints $l$ or $r$ in finite time.
By convention we force $Y$ to stay in $l$ (resp.,~$r$) in this case.
This can be enforced in~\eqref{eq:27092018a1}
by extending $\eta$ to $\ol I$ with $\eta(l)=\eta(r)=0$.
In this example $Y$ is a regular continuous strong Markov
process with the state space being the interval
with the endpoints $l$ and~$r$
(whether $l$ and $r$ belong to the state space
is determined by the behavior of $\eta$ near the boundaries).
Moreover, $Y$ is in natural scale, and its speed measure
on $I^\circ$ is given by the formula
$$
m(dx)=\frac 2{\eta^2(x)}\,dx.
$$
In this situation a change of variables shows that it holds for 
all $h\in (0,\ol h)$, $y\in I^\circ$ that
\begin{equation}
\int_{(y-a_h(y),y+a_h(y))} (a_h(y)-|u-y|)\,m(du)
=
2a_h(y)^2\int_{-1}^1 \frac{1-|z|}{\eta^2(y+a_h(y)z)}\, dz.
\end{equation}
Condition~(A) hence becomes that for every compact subset $K$ of $I^\circ$ it holds that
\begin{equation}\label{eq:cond_A_sde}
\lim_{h\to 0}
\left(
\sup_{y\in K } 
\left|
\frac{a_h(y)^2}{h}\int_{-1}^1 \frac{1-|z|}{\eta^2(y+a_h(y)z)}\, dz
-1
\right|
\right)
=0.
\end{equation}

\begin{ex}[Brownian motion]\label{ex:14122018a1}
In the special case where $Y=W$ is a Brownian motion
(i.e., $I=\bbR$, $\eta(x)\equiv1$), Condition~(A) requires that for all compact sets $K\subset \R$ it holds that
$
\sup_{y\in K } 
\left|
\frac{a_h(y)^2}{h}
-1
\right|
\to
0
$ as $h\to 0$. In particular, Condition~(A) is satisfied for the choice $a_h(y)=\sqrt{h}$, $h\in (0,\infty)$, $y\in \R$, and we recover from Theorem~\ref{main thm sec 1} Donsker's functional limit theorem for the scaled simple random walk.

Moreover, in the case of a Brownian motion it is natural to restrict ourselves to space-homogeneous (i.e., constant) scale factors $a_h(y)\equiv a_h$, $h\in(0,\ol h)$, so that Condition~(A) takes the form $\lim_{h\to0}\frac{a_h^2}h=1$.
It is straightforward to show that the latter condition is also necessary for the weak convergence of approximations
\eqref{eq:def_X}--\eqref{eq:13112017a1}
driven by space-homogeneous scale factors
to the Brownian motion.
\end{ex}

\begin{ex}[Geometric Brownian motion]
Let $\sigma>0$ and assume that $\eta$ satisfies for all $x\in (0,\infty)$ that $\eta(x)=\sigma x$. Then the solution $Y$ of \eqref{eq:27092018a1} with positive initial value $Y_0=y\in (0,\infty)$ is a geometric Brownian motion. Its state space is $I=(0,\infty)$ and both boundary points are inaccessible. Note that for all
$y \in (0,\infty)$, $a\in (0,y)$ it holds that
$$
\int_{-1}^1 \frac{1-|z|}{\eta^2(y+az)}\, dz
=\frac{1}{(\sigma y)^2}\int_{-1}^1 \frac{1-|z|}{(1+az/y)^2}\, dz
= -\frac{1}{(\sigma a)^2} \log\left(1-\frac{a^2}{y^2}\right).
$$
Hence, Condition~(A) requires that for all compact sets $K\subset (0,\infty)$ it holds that
\begin{equation}\label{eq:cond_A_gbm}
\lim_{h\to 0}
\left(
\sup_{y\in K } 
\left|
\frac{1}{h \sigma^2}\log\left(1-\frac{a_h(y)^2}{y^2}\right)
+1
\right|
\right)
=0.
\end{equation}
To obtain the EMCEL approximation of $Y$ we solve for all $y\in (0,\infty)$, $h\in (0,\infty)$ the equation 
$
\frac{1}{h \sigma^2}\log\left(1-\frac{a^2}{y^2}\right)
+1=0
$
in $a$ and obtain $\wh a_h(y)=y \sqrt{1-e^{-\sigma^2 h}}$. Note that also the usual choice $a_h(y)=\sqrt{h}\sigma y$, $y\in (0,\infty)$, $h\in (0, 1/\sigma^2)$,
which corresponds to the weak Euler scheme
for geometric Brownian motion,
satisfies~\eqref{eq:cond_A_gbm}. 
\end{ex}

\subsection*{Convergence of the weak Euler scheme}
Throughout this subsection we assume that $I=\R$.
A common method to approximate solutions of SDEs is the Euler scheme. For equations of the form \eqref{eq:27092018a1} with initial condition $Y_0=y$ the Euler scheme $(X^{Eu,h}_{kh})_{k\in \N_0}$ with time step $h\in (0,\infty)$ is given by
$$
X^{Eu,h}_0=y
\quad\text{and}\quad
X^{Eu,h}_{(k+1)h} = X^{Eu,h}_{kh}+ \eta(X^{Eu,h}_{kh}) (W_{(k+1)h}-W_{kh}), \quad \text{ for } k \in \bbN_0. 
$$
Weak Euler schemes are variations of the Euler scheme, where the normal increments $W_{(k+1)h}-W_{kh}$, $k\in \N_0$, are replaced by an iid sequence of centered random variables with variance $h$. Therefore, with the choice $a_h(y)=\sqrt{h} \eta(y)$, $h\in (0,\infty)$, $y\in \R$, the Markov chain $(X^h_{kh})_{k\in \N_0}$ defined in \eqref{eq:def_X} represents a weak Euler scheme with Rademacher increments.

In this subsection we show how Theorem~\ref{main thm sec 1} can be used to derive new convergence results for weak Euler schemes. To this end let the setting of Section~\ref{sec:sdes} be given and let $a_h(y)=\sqrt{h} \eta(y)$, $h\in (0,\infty)$, $y\in \R$. 
Then it follows from \eqref{eq:cond_A_sde} that Condition~(A) is equivalent to assuming that for every compact subset $K \subset \R$ we have
\begin{equation}\label{eq:cond_A_sde_euler}
\sup_{y\in K } 
\left|
\int_{-1}^1 \frac{\eta^2(y) (1-|z|)}{\eta^2(y+\sqrt{h}\eta (y)z)}\, dz
-1
\right|
=
\sup_{y\in K } 
\left|
 \int_{-1}^1 
\frac{\eta^2(y)-\eta^2(y+\sqrt{h}\eta (y)z)}{\eta^2(y+\sqrt{h}\eta (y)z)}
\left( 1-|z|\right)
\, dz
\right|
\to 0,
\end{equation}
as $h\to 0$.

Suppose that $\eta$ is continuous, let $K\subset \R$ be compact and let $\eps>0$. Then $\eta$ is bounded on $K$ and since every continuous function is uniformly continuous on compact sets, we obtain that there exists $h_0 \in (0,\infty)$ such that for all $h\in (0,h_0]$, $y\in K$, $z\in [-1,1]$ it holds that
$$
|\eta(y)-\eta(y+\sqrt{h}\eta (y)z)|\le  \eps.
$$
By \eqref{eq:27092018a2} and the continuity of $\eta$ the function $\eta^2$ is strictly bounded away from $0$ on every compact subset of $\R$ and hence we obtain that there exists $C\in [0,\infty)$ such that for all
$h\in (0,h_0]$ it holds that
$$
\sup_{y\in K } 
\left|
 \int_{-1}^1 
\frac{\eta^2(y)-\eta^2(y+\sqrt{h}\eta (y)z)}{\eta^2(y+\sqrt{h}\eta (y)z)}
\left( 1-|z|\right)
\, dz
\right| \le C \eps.
$$
It follows with \eqref{eq:cond_A_sde_euler} that Condition~(A) is satisfied. Therefore we obtain the following Corollary of Theorem~\ref{main thm sec 1}.
\begin{corollary}\label{cor:euler}
Assume the setting of Section~\ref{sec:sdes} with $I=\R$ and that $\eta$ is continuous. Let $a_h \colon \R \to \R$ satisfy $a_h(y)=\sqrt{h} \eta(y)$ for all $h\in (0,\infty)$, $y\in \R$. 
Then
for all 
$y\in \R$
the distributions of the processes $(X^{h,y}_{t})_{t \in [0,\infty)}$ under $P$ converge weakly to the distribution of $(Y_t)_{t \in [0,\infty)}$ under $P_{y}$, as $h \to 0$.
\end{corollary}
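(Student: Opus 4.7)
The plan is to invoke Theorem~\ref{main thm sec 1} with the choice $a_h(y)=\sqrt{h}\,\eta(y)$, and the only thing to check is that this family of scale factors satisfies Condition~(A). All of the work consists in verifying the reformulation of Condition~(A) given in equation~\eqref{eq:cond_A_sde_euler}, namely that for every compact $K\subset\bbR$,
\[
\sup_{y\in K}\left|\int_{-1}^1\frac{\eta^2(y)-\eta^2\bigl(y+\sqrt{h}\,\eta(y)z\bigr)}{\eta^2\bigl(y+\sqrt{h}\,\eta(y)z\bigr)}(1-|z|)\,dz\right|\xrightarrow[h\to 0]{}0.
\]

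Fix $K\subset\bbR$ compact and $\varepsilon>0$. First I would enlarge $K$ to a compact set $\wt K$ containing the $\sqrt{h}\|\eta\|_{\infty,K}$-neighbourhood of $K$ for all sufficiently small $h$; this is possible because $\eta$ is continuous and hence bounded on $K$. Then for all small $h$, $y\in K$ and $z\in[-1,1]$, the perturbed point $y+\sqrt{h}\,\eta(y)z$ stays inside $\wt K$. On $\wt K$ the function $\eta^2$ is continuous and, by the Engelbert–Schmidt condition \eqref{eq:27092018a2}, strictly positive; hence $\eta^2$ attains a positive minimum $c>0$ on $\wt K$, so the denominator in the integrand is bounded below by $c$ uniformly in $y\in K$ and $z\in[-1,1]$.

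Next I would use uniform continuity of $\eta^2$ on $\wt K$: there exists $\delta>0$ such that $|\eta^2(x)-\eta^2(x')|<\varepsilon$ whenever $x,x'\in\wt K$ with $|x-x'|<\delta$. Since $|\sqrt{h}\,\eta(y)z|\le\sqrt{h}\,\|\eta\|_{\infty,K}$, one can choose $h_0>0$ so small that this bound is less than $\delta$ for every $h\in(0,h_0]$. Consequently, for all such $h$, $y\in K$ and $z\in[-1,1]$, the numerator is bounded in absolute value by $\varepsilon$. Putting the bounds on numerator and denominator together and using $\int_{-1}^1(1-|z|)\,dz=1$, we obtain
\[
\sup_{y\in K}\left|\int_{-1}^1\frac{\eta^2(y)-\eta^2\bigl(y+\sqrt{h}\,\eta(y)z\bigr)}{\eta^2\bigl(y+\sqrt{h}\,\eta(y)z\bigr)}(1-|z|)\,dz\right|\le\frac{\varepsilon}{c}.
\]
Since $\varepsilon>0$ is arbitrary, the left-hand side tends to $0$, so Condition~(A) holds, and Theorem~\ref{main thm sec 1} yields the claimed weak convergence.

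The main (mild) obstacle is the bookkeeping in the first step: one must ensure the perturbed argument stays inside a fixed compact set on which $\eta$ is both uniformly continuous and uniformly bounded away from zero, rather than working on $K$ itself. Everything else is a routine $\varepsilon$-$\delta$ argument, and no further information about $\eta$ beyond continuity and the Engelbert–Schmidt conditions is required.
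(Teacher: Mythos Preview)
Your proof is correct and follows essentially the same route as the paper: verify Condition~(A) in the form~\eqref{eq:cond_A_sde_euler} via uniform continuity of $\eta$ (or $\eta^2$) and a uniform positive lower bound for $\eta^2$ on a compact set, then apply Theorem~\ref{main thm sec 1}. Your explicit introduction of the enlarged compact set $\wt K$ containing the perturbed arguments is in fact slightly more careful than the paper's presentation, which leaves this step implicit.
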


\begin{remark}
Corollary~\ref{cor:euler} complements convergence results for the Euler scheme for example obtained in \cite{yan} and \cite{gyongy98}. Theorem 2.2 in \cite{yan} shows weak convergence of the Euler scheme if $\eta$ has at most linear growth and is discontinuous on a set of Lebesgue measure zero. Theorem 2.3 in \cite{gyongy98} establishes almost sure convergence of the Euler scheme if $\eta$ is locally Lipschitz continuous. Moreover, \cite{gyongy98} allows for a multidimensional setting and a drift coefficient. In contrast, Corollary~\ref{cor:euler} above applies to the {\itshape weak} Euler scheme and does not require linear growth or local Lipschitz continuity of $\eta$.
\end{remark}

\begin{remark}
As stated in Corollary~\ref{cor:emcel}, EMCEL approximations can be constructed
for \textit{every} general diffusion. In particular, they can be used in cases where $\eta$ is not continuous and where (weak) Euler schemes do not converge (see, e.g., Section 5.4 in \cite{aku-jmaa}). In Sections~\ref{sec:examples} and \ref{sec:cantor_bm} we consider further irregular examples.
\end{remark}

\section{Embedding the chains into the Markov process}\label{sec 2}
In this section we construct the embedding stopping times. 
Throughout the section we assume the setting of Section~\ref{sec:MC approx}.

We need to introduce an auxiliary subset of $I^\circ$.
To this end, if $l> -\infty$, we define, for all $h\in(0,\ol h)$,
\begin{align*}
l_h := l+ \inf\left\{ a \in \left(0,\frac{r-l}2\right]:
a<\infty
\;\;\text{and}\;\;
\frac12\int_{(l, l+2a)} (a - |u-(l+a)|)\,m(du) \ge h \right\}, 
\end{align*}
where we use the convention $\inf \emptyset = \infty$.
If $l = -\infty$, we set $l_h = -\infty$. 
Similarly, if $r < \infty$, then we define, for all $h\in(0,\ol h)$,
\begin{align*}
r_h := r- \inf\left\{ a \in \left(0,\frac{r-l}2\right]:
a<\infty
\;\;\text{and}\;\;
\frac12\int_{(r-2a, r)} (a - |u-(r-a)|)\,m(du) \ge h \right\}.
\end{align*}
If $r = \infty$, we set $r_h = \infty$.
It is worth noting that $l$ is inaccessible if and only if $l_h = l$ for all $h\in (0,\ol h)$;
and, similarly, $r$ is inaccessible if and only if $r_h = r$ for all $h\in (0,\ol h)$.
This is verified in Remark~\ref{rem:03012020a2} below.
The auxiliary subset is defined by
$$
I_h = (l_h,r_h)\cup\left\{y\in I^\circ \colon y\pm a_h(y)\in I^\circ\right\}.
$$

Now we have everything at hand to start constructing a sequence of embedding stopping times. Suppose $Y$ starts at a point $y\in I^\circ$ and fix $h \in(0, \ol h)$. Set $\tau^h_0=0$.
Let $\sigma^h_1 = H_{y-a_h(y), y+a_h(y)}$.
Recall that we have
$$
E_y[\sigma^h_1] = \frac12 \int_{(y-a_h(y),y+a_h(y))} (a_h(y)-|u-y|)\,m(du)
$$
(see Remark~\ref{rem:03012020a1}).
We now define $\tau^h_1$ by distinguishing two cases. 

\medskip\noindent
{\itshape Case 1:} $y \in I_h$  (i.e., $y\in (l_h,r_h)$ or $y\pm a_h(y)\in I^\circ$). In this case we set $\tau^h_1 = \sigma^h_1$.

\medskip\noindent
{\itshape Case 2:} $y \notin I_h$ (i.e., $y\notin (l_h,r_h)$ and ($y+a_h(y)=r$ or $y-a_h(y)= l$)). In this case we deterministically extend $\sigma^h_1$ so as to make it have expectation $h$.
Observe that by the definition of $l_h$ and $r_h$ we have in this case $E_y[\sigma^h_1] \le h$. 
Moreover, we can assume in this case that it must hold that $P_y(Y_{\sigma^N_1} \in \{l,r\})=\frac12$
(only in the case $\max\{|l|,|r|\}<\infty$, $y=\frac{l+r}2$ and $a_h(y)=\frac{r-l}2$
this probability is $1$, but we exclude this case by considering a sufficiently small $h$, so that $a_h(\frac{l+r}2)<\frac{r-l}2$; notice that Condition (A) implies that, for any $y\in I^\circ$, $\lim_{h\to 0}a_h(y)=0$).  We define $\tau^h_1$ by  
\begin{align*}
\tau^h_1 = \sigma^h_1 + 2 \left(h  - E[\sigma^h_1]\right)
1_{ \{ l, r\}}(Y_{\sigma^h_1}). 
\end{align*}
Observe that the definition implies $E_y[\tau^h_1] = h$ and that the three random variables $Y_{\tau^h_1}$, $Y_{\sigma^h_1}$ and $X^{h,y}_h$ have all the same law.  

\smallskip
We can proceed in a similar way to define the subsequent stopping times. Let $k\in \bbN$. Suppose that we have already constructed $\tau^h_k$. We first define $\sigma^h_{k+1} = \inf\{ t \ge \tau^h_k: |Y_t - Y_{\tau^h_k}| = a_h(Y_{\tau^h_k}) \}$. 
On the event $\{Y_{\tau^h_k} \in I_h\}$
we set $\tau^h_{k+1} = \sigma^h_{k+1}$.
On the event $\{Y_{\tau^h_k} \notin I_h\}$
we extend $\sigma^h_{k+1}$ as follows. Note that $Y_{\tau^h_k}$ takes only finitely many values. Let $v \in I\setminus (l_h, r_h)$ be a possible value of $Y_{\tau^h_k}$ such that $v-a_h(v) = l$ or $v+a_h(v) = r$. Consider the event $A = \{Y_{\tau^h_k} = v \}$. Observe that $c:= E_y[\sigma^h_{k+1}-\tau^h_k | A] \le  h$.
We extend $\sigma^h_{k+1}$ on the event $A$ by setting 
\begin{align}\label{eq:def_tau}
\tau^h_{k+1} = \sigma^h_{k+1} +
2\left(h  - c\right)
1_{  \{ l , r\}}(Y_{\sigma^h_{k+1}} )
\end{align}   
(notice that $P_y(Y_{\sigma^h_{k+1}} \in \{l,r\} | A)=\frac12$). 
This implies that $E_y[\tau^h_{k+1} - \tau^h_k| \cF_{\tau^h_k}] = h$ on the event $\{Y_{\tau^h_k} \notin I_h\}$. Moreover, the processes $(Y_{\tau^h_j})_{j\in \{0,\ldots,k+1\}}$ and $(X^{h,y}_{jh})_{j\in \{0,\ldots,k+1\}}$ have the same law. 
To sum up, we have the following. 

\begin{propo}\label{main thm sec 2}
For all $h\in (0,\ol h)$ and $y\in I^\circ$ the sequence of stopping times $(\tau^h_k)_{k \in \Znn}$ satisfies
\begin{enumerate}
\item 
$
\Law_{P_y}
\left(Y_{\tau^h_k}; k\in \Znn \right)
=\Law_P
\left(X^{h,y}_{kh}; k\in \Znn\right).
$
\item For all $k\in \N_0$ we have
\begin{align*}
&E_y(\tau^h_{k+1} - \tau^h_k| \cF_{\tau^h_k}) \\
&=\left\{ \begin{array}{lc}
\frac{1}{2}\int_{(Y_{\tau^h_k}-a_h(Y_{\tau^h_k}),Y_{\tau^h_k}+a_h(Y_{\tau^h_k}))} (a_h(Y_{\tau^h_k})-|u-Y_{\tau^h_k}|)\,m(du), &\text{ if } Y_{\tau^h_k} \in I_h, \\
h, & \text{ if }Y_{\tau^h_k} \notin I_h.
\end{array}\right.
\end{align*} 
\end{enumerate}
\end{propo}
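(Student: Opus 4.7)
The proposition is essentially a bookkeeping consequence of the construction carried out before its statement, so the plan is to make that construction airtight by induction on $k$, invoking the strong Markov property of $Y$ at $\tau^h_k$ at each step.

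For part (1), I would argue by induction on $k$. The base case $k=0$ is trivial since $Y_0=y=X^{h,y}_0$ under the respective measures. For the inductive step, assume the laws of $(Y_{\tau^h_j})_{j=0,\ldots,k}$ under $P_y$ and $(X^{h,y}_{jh})_{j=0,\ldots,k}$ under $P$ coincide. Applying the strong Markov property at $\tau^h_k$ and exploiting that $Y$ is in natural scale, the exit time $\sigma^h_{k+1}$ of $Y$ from the symmetric interval $(Y_{\tau^h_k}-a_h(Y_{\tau^h_k}),Y_{\tau^h_k}+a_h(Y_{\tau^h_k}))$ has $Y_{\sigma^h_{k+1}}=Y_{\tau^h_k}\pm a_h(Y_{\tau^h_k})$, each with conditional probability $\tfrac12$ given $\cF_{\tau^h_k}$. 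The extension in Case~2 alters only the time variable and leaves $Y_{\tau^h_{k+1}}=Y_{\sigma^h_{k+1}}$, so conditionally on $\cF_{\tau^h_k}$ we have $Y_{\tau^h_{k+1}}=Y_{\tau^h_k}+a_h(Y_{\tau^h_k})\varepsilon$ for a $\pm1$-valued Bernoulli $\varepsilon$ independent of $\cF_{\tau^h_k}$, which matches the recursion defining $X^{h,y}_{(k+1)h}$. A small degenerate case, in which $Y$ is already absorbed at $l$ or $r$ so $a_h(Y_{\tau^h_k})=0$, gives $\sigma^h_{k+1}=\tau^h_k$ and $Y_{\tau^h_{k+1}}=Y_{\tau^h_k}$, which also agrees with the chain because $X^{h,y}$ then satisfies $X^{h,y}_{(k+1)h}=X^{h,y}_{kh}$.

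For part (2), I would again condition on $\cF_{\tau^h_k}$ and apply the strong Markov property. On the event $\{Y_{\tau^h_k}\in I_h\}$ we have $\tau^h_{k+1}-\tau^h_k=\sigma^h_{k+1}-\tau^h_k$, which under $P_y(\,\cdot\,|\cF_{\tau^h_k})$ has the same distribution as $H_{Y_{\tau^h_k}-a_h(Y_{\tau^h_k}),Y_{\tau^h_k}+a_h(Y_{\tau^h_k})}(Y)$ started from $Y_{\tau^h_k}$; the formula for its expectation from Remark~\ref{rem:03012020a1} yields precisely the first line of the claim. On the event $\{Y_{\tau^h_k}\notin I_h\}$, $Y_{\tau^h_k}$ takes only finitely many possible values by (1) (since $X^{h,y}$ does); for each such value $v$ with $v\pm a_h(v)\in\{l,r\}$ we use that $v$ is in natural scale and hence $P_y(Y_{\sigma^h_{k+1}}\in\{l,r\}\mid A)=\tfrac12$, where $A=\{Y_{\tau^h_k}=v\}$. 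Thus, denoting $c=E_y[\sigma^h_{k+1}-\tau^h_k\mid A]$, definition~\eqref{eq:def_tau} gives
\[
E_y[\tau^h_{k+1}-\tau^h_k\mid A]=c+2(h-c)\cdot\tfrac12=h,
\]
which is the second line of the claim. Summing over the finitely many values $v$ yields the conditional expectation on the event $\{Y_{\tau^h_k}\notin I_h\}$.

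The only delicate point is the inductive management of boundary behavior: one has to check that the extension in Case~2 is legitimate, i.e., that $P_y(Y_{\sigma^h_{k+1}}\in\{l,r\}\mid A)=\tfrac12$ rather than $1$. This is exactly the degenerate configuration excluded in the construction by taking $h$ small enough so that $a_h((l+r)/2)<(r-l)/2$; with this caveat and the convergence $a_h(y)\to 0$ forced by Condition~(A), the argument sketched above goes through without further difficulty.
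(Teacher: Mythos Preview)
Your proposal is correct and mirrors the paper's own argument: the paper does not give a separate proof but treats the proposition as a summary of the recursive construction of $(\tau^h_k)$ in Section~\ref{sec 2}, where the strong Markov property, the symmetric-exit probability $\tfrac12$ (since $Y$ is in natural scale), the absorbing boundary convention (so that $Y_{\tau^h_{k+1}}=Y_{\sigma^h_{k+1}}$ in Case~2), and the exclusion of the degenerate configuration $a_h((l+r)/2)=(r-l)/2$ are used exactly as you describe. Your handling of both parts, including the verification that the Case~2 extension yields conditional expectation $h$, is the same computation the paper records just before stating the proposition.
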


\section{Higher moment estimates for exit times}\label{sec:higher moments}
In this section we provide some moment estimates for the exit times of $Y$ from intervals. We use the estimates in the next section to prove convergence in probability of $\sup_{k\in \{1,\ldots, \lfloor T/h \rfloor \}}\left|\tau^h_k-kh\right|$ to zero, where $(\tau^h_k)_{k \in \Znn}$ is the sequence of embedding stopping times from Section \ref{sec 2}. This is a crucial ingredient in the proof of our main result, Theorem~\ref{main thm sec 1}.

We introduce the function $q\colon I^\circ\times\ol I \to [0,\infty]$ defined by
\begin{align}\label{eq:def_q}
q(y,x) = \frac12 m(\{y\}) |x-y| + \int_y^x m((y,u))du, 
\end{align}
where for $u<y$ we set $m((y,u)):= - m((u,y))$.
Notice that, for $y\in I^\circ$, the function $q(y,\cdot)$
is decreasing on $[l,y]$ and increasing on $[y,r]$.
For our analysis the key property of $q$ is that it makes
 the process $q(y,Y_t) - (t \wedge H_{l,r})$, $t \in [0,\infty)$,
a $P_y$-local martingale (see Lemma~\ref{propo q-t mart} below).
Moreover, $q$ plays a central role in Feller's test for explosions:
for any $y\in I^\circ$,
\begin{align}\label{fellerl}
l \text{ is accessible (i.e., }l\in I)
&\;\;\Longleftrightarrow\;\;
q(y,l) < \infty,\\[2mm]
\label{fellerr}
r \text{ is accessible  (i.e., }r\in I)
&\;\;\Longleftrightarrow\;\;
q(y,r) < \infty
\end{align}
(see, e.g., Lemma~2.1 in \cite{AKKK17} or Theorem~3.3 in \cite{AKKU2018}).
Consequently, $q$ is finite on $I^\circ\times I$.
Notice that for all $y,z \in I^\circ$
and $x\in I$ we have
\begin{align}\label{master formula}
q(z,x) = q(y,x) - q(y, z) - \frac{\partial^0 q}{\partial x}(y, z) (x-z), 
\end{align}
where $\frac{\partial^0 q}{\partial x}(y, x) = \frac12 (\frac{\partial^+ q}{\partial x}+\frac{\partial^- q}{\partial x})(y, x)$.

\medskip
The following lemma identifies a local martingale associated to $Y$.

\begin{lemma}\label{propo q-t mart}
Let $y\in I^\circ$.
Then the process $q(y,Y_t) - (t \wedge H_{l,r})$, $t \in [0,\infty)$,
is a $P_y$-local martingale. 
\end{lemma}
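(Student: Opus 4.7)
The plan is to apply the Itô--Tanaka formula to the convex function $q(y,\cdot)$ and to the continuous local martingale $(Y_{t\wedge H_{l,r}})_{t\ge 0}$, and then to match the resulting local-time term with $t\wedge H_{l,r}$ via the occupation time formula characterizing the speed measure.

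First I would check that $q(y,\cdot)$ is finite and convex on $I$. Finiteness on $I^\circ$ is immediate from \eqref{eq:06072018a1}; at an accessible boundary point it follows from Feller's test~\eqref{fellerl}--\eqref{fellerr}. A direct differentiation of~\eqref{eq:def_q} shows that, for $x\ne y$, the one-sided derivatives $\partial_x^\pm q(y,x)$ are (up to sign) of the form $\tfrac12 m(\{y\})+m(J)$ for an interval $J$ bordered by $y$ and $x$; these are monotone nondecreasing in $x$ and exhibit a jump of size $m(\{y\})$ at $x=y$. Hence $q(y,\cdot)$ is convex on $I$ and its second derivative in the distributional sense coincides with the restriction of $m$ to $I$.

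Next, since $Y$ is in natural scale and every accessible boundary point is absorbing, the stopped process $(Y_{t\wedge H_{l,r}})_{t\ge 0}$ is a continuous $P_y$-local martingale. Applying the Itô--Tanaka formula (e.g.\ Theorem~VI.1.5 in~\cite{RY}) to $q(y,\cdot)$ then yields
\begin{align*}
q(y,Y_{t\wedge H_{l,r}})-q(y,y)
=\int_0^{t\wedge H_{l,r}}\partial_x^- q(y,Y_s)\,dY_s+\frac12\int_I L^a_{t\wedge H_{l,r}}(Y)\,m(da),
\end{align*}
where $L^a_\cdot(Y)$ denotes the semimartingale local time of $Y$ at level $a$. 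The stochastic integral is a $P_y$-local martingale, and $q(y,y)=0$ by the definition~\eqref{eq:def_q}.

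Finally, invoking the occupation times formula characterizing the speed measure in the RY/BS convention used here,
\begin{align*}
\int_0^{t\wedge H_{l,r}}f(Y_s)\,ds=\frac12\int_I f(a)\,L^a_{t\wedge H_{l,r}}(Y)\,m(da),\quad f\colon I\to[0,\infty)\text{ Borel},
\end{align*}
with $f\equiv 1$ gives $\tfrac12\int_I L^a_{t\wedge H_{l,r}}(Y)\,m(da)=t\wedge H_{l,r}$, so that $q(y,Y_{t\wedge H_{l,r}})-(t\wedge H_{l,r})$ is a $P_y$-local martingale. The property then extends to all $t\ge 0$: after $H_{l,r}$, the process $Y$ is absorbed at the boundary, so both $q(y,Y_t)=q(y,Y_{H_{l,r}})$ and $t\wedge H_{l,r}=H_{l,r}$ remain constant. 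The delicate point of the argument is this last step, where the normalizations of the semimartingale local time and of the speed measure must be reconciled, accounting for the factor $\tfrac12$; the remaining computations are routine.
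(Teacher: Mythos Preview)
Your argument is correct and takes a genuinely different route from the paper's own proof. The paper proceeds in an elementary, self-contained way: it first fixes $a<y<b$ in $I$, establishes the identity $E_y[H_{a,b}]=E_y[q(y,Y_{H_{a,b}})]$ directly from the Green-function formula~\eqref{eq:03012020a1}, and then uses the Markov property together with the algebraic relation~\eqref{master formula} to show that $q(y,Y_{t\wedge H_{a,b}})-t\wedge H_{a,b}$ is a $P_y$-martingale for each such $a,b$. A localization with $l_n\searrow l$, $r_n\nearrow r$ then yields the local martingale property on all of~$I$. No stochastic calculus beyond the Markov property is invoked.

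Your approach instead packages the computation into the It\^o--Tanaka formula applied to the convex function $q(y,\cdot)$ and the continuous local martingale $Y^{H_{l,r}}$, and then identifies the local-time integral $\tfrac12\int_I L^a_{t\wedge H_{l,r}}(Y)\,m(da)$ with $t\wedge H_{l,r}$ via the occupation-time characterization of the speed measure. This is shorter and more conceptual, but it front-loads two nontrivial facts: that $Y$ in natural scale is a local martingale, and, more importantly, the identity $t\wedge H_{l,r}=\tfrac12\int_I L^a_{t\wedge H_{l,r}}(Y)\,m(da)$, which in full generality (atoms and singular parts of $m$ included) essentially relies on the time-change representation of one-dimensional diffusions. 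The paper's argument avoids both of these inputs and works purely from the Green-function description of $m$ already in use elsewhere in the paper; your argument, by contrast, makes the reason for the result transparent (second distributional derivative of $q(y,\cdot)$ equals $m$) at the cost of importing a heavier structural theorem. Your caveat about matching the normalizations of semimartingale local time and the speed measure is exactly the point where care is needed; the factor $\tfrac12$ you use is consistent with the \cite{RY} conventions adopted here.
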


\begin{proof}
Let $a, b \in I$ with $a < y < b$. We first show that $q(y,Y_{t \wedge H_{a,b}}) - t \wedge H_{a,b}$, $t \in[0,\infty)$, is a $P_y$-martingale.
It follows from~\eqref{eq:03012020a1} that
\begin{align*}
E_y H_{a,b} & = \frac{1}{b-a} \left[\int_{(a,y)} (b-y)(x-a) m(dx)+ \int_{(y,b)} (b-x)(y-a) m(dx) + (b-y)(y-a)m(\{y\}) \right]
\\
& =\frac{b-y}{b-a} \left[\int_{(a,y)} (x-a) m(dx)+(y-a)\frac{m(\{y\})}{2} \right] + \frac{y-a}{b-a}\left[\int_{(y,b)} (b-x) m(dx) + (b-y)\frac{m(\{y\})}{2} \right]
\\
& = \frac{b-y}{b-a} q(y,a) + \frac{y-a}{b-a} q(y,b)
\\
& = E_y q(y, Y_{H_{a,b}}),
\end{align*}
where, in the third equality, we use the representation
$\int_{(a,y)}1_{\{u<x\}}\,du$ for $x-a$
(and the similar one for $b-x$)
and apply Fubini's theorem. Thus,
\begin{equation}\label{eq 1.step}
E_y H_{a,b} = E_y q(y, Y_{H_{a,b}}).
\end{equation}
Next, we observe that for all $t\in [0,\infty)$ it holds
\begin{equation}\label{aux eq 230716}
E_y\left[q(y,Y_{H_{a,b}}) - H_{a,b} | \cF_t \right] 
= (q(y,Y_{H_{a,b}}) - H_{a,b}) 1_{\{ H_{a,b} \le t \}} + E_y\left[q(y,Y_{H_{a,b}}) - H_{a,b} | \cF_t \right] 1_{\{ H_{a,b} > t \}}. 
\end{equation}
On the event $\{H_{a,b} > t\}$ we have $q(y,Y_{H_{a,b}}) - H_{a,b} = q(y,Y_{H_{a,b}})\circ \theta_t - H_{a,b}\circ \theta_t -t$,
where $\theta_t$ denotes the shift operator for~$Y$
(see Chapter~III in \cite{RY}).
The Markov property and~\eqref{eq 1.step} imply that on the event $\{ H_{a,b} > t \}$ we have $P_y$-a.s.
\begin{align}\label{051017_1}
E_y\left[q(y,Y_{H_{a,b}}) - H_{a,b} | \cF_t \right] = & E_{z} [q(y,Y_{H_{a,b}}) - H_{a,b}]\Big|_{z = Y_t} -t \nonumber \\
= & E_{z} [q(y,Y_{H_{a,b}}) - q(z,Y_{H_{a,b}})]\Big|_{z = Y_t}-t. 
\end{align}
Formula \eqref{master formula} yields for all $z\in I^\circ$
\begin{align*}
q(y,Y_{H_{a,b}}) - q(z,Y_{H_{a,b}}) = q(y,z) + \frac{\partial^0 q}{\partial x} (y,z) (Y_{H_{a,b}} - z).
\end{align*}
Since $E_z[Y_{H_{a,b}} - z] = 0$ for all $z\in I^\circ$, equation \eqref{051017_1} implies that on the event $\{ H_{a,b} > t \}$ we have $P_y$-a.s.
\begin{align*}
E_y\left[q(y,Y_{H_{a,b}}) - H_{a,b} | \cF_t \right] 
& = q(y,Y_t) - t. 
\end{align*}
Together with \eqref{aux eq 230716} this yields for all $t\in [0,\infty)$
\begin{align*}
E_y\left[q(y,Y_{H_{a,b}}) - H_{a,b} | \cF_t \right] 
= q(y,Y_{t \wedge H_{a,b}}) - t \wedge H_{a,b},
\end{align*}
which shows that $q(y,Y_{t \wedge H_{a,b}}) - t \wedge H_{a,b}$, $t \in[0,\infty)$, is a $P_y$-martingale.

The statement of the lemma follows via a localization argument. If $l \notin I$, then choose a decreasing sequence $(l_n)_{n\in \N}\subseteq I$ with $l_1<y$ and $\lim_{n \to \infty} l_n= l$. If $l \in I$, set $l_n = l$ for all $n\in \N$. Similarly, if $r \notin I$, then choose an increasing sequence $(r_n)_{n\in \N}\subseteq I$ with $r_1>y$ and $\lim_{n\to \infty} r_n = r$, and if $r \in I$, then set $r_n = r$ for all $n\in \N$. The sequence of stopping times $\inf\{t \ge 0\colon X_t \notin [l_n,r_n]\}$, $n\in \N$,
 is then a localizing sequence for the process $q(y,Y_t) - (t \wedge H_{l,r})$, $t\in [0,\infty)$.   
\end{proof}

\begin{remark}\label{rem:03012020a2}
We still owe verifying that, for $l_h$ and $r_h$ introduced in Section~\ref{sec 2},
$l$ is inaccessible if and only if $l_h = l$ for all $h\in (0,\ol h)$;
and $r$ is inaccessible if and only if $r_h = r$ for all $h\in (0,\ol h)$.
Let us prove this equivalence for the left boundary point.
To this end, the following identity is helpful:
for $y\in I^\circ$ and $a\in(0,\infty)$ such that $y\pm a\in\ol I$,
it holds
\begin{equation}\label{eq:03012020a2}
\int_{(y-a,y+a)} (a-|u-y|)\,m(du)=q(y,y+a)+q(y,y-a).
\end{equation}
Indeed, if $y\pm a\in I$, then \eqref{eq:03012020a2} follows from \eqref{eq:03012020a3} and~\eqref{eq 1.step}.
If we only have $y\pm a\in\ol I$, then \eqref{eq:03012020a2} follows by the monotone convergence argument.

Now, if $l=-\infty$, then the equivalence is clear.
Let $l>-\infty$. Consider a small enough $a>0$. For the integral in the definition of $l_h$, we have due to~\eqref{eq:03012020a2}
\begin{equation}\label{eq:03012020a4}
\int_{(l, l+2a)} (a - |u-(l+a)|)\,m(du)=q(l+a,l)+q(l+a,l+2a),
\end{equation}
where $l+a,l+2a\in I^\circ$, and hence the second term on the right-hand side is finite,
i.e., the integral is infinite if and only if the first term on the right-hans side is infinite.
We conclude by applying Feller's test~\eqref{fellerl}:
$l$ is inaccessible if and only if the integral in~\eqref{eq:03012020a4} is infinite for all small $a>0$,
and the latter holds if and only if $l_h=l$ for all $h\in(0,\ol h)$.
\end{remark}

The next result provides conditions guaranteeing
that moments of a stopping time $\tau$ can be bounded 
against an integral with respect to the distribution of $Y_\tau$. 
\begin{theo}\label{thm:mom_gen_st}
Let $\alpha \in [1,\infty)$ and let $y\in I$. Let $\tau$ be a stopping time 
such that $\tau \le H_{l,r}(Y)$, $P_y$-a.s.\ and the process $(q^\alpha(y,Y_{\tau\wedge t}))_{t\in [0,\infty)}$ is of class~(D) under $P_y$. 
Then $\tau<\infty$, $P_y$-a.s.\ and it holds that
\begin{equation}\label{eq:mom_gen_st}
E_y[\tau^\alpha]\le \alpha^\alpha E_y[q^\alpha(y,Y_{\tau})].
\end{equation}
\end{theo}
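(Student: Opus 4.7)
The plan is to combine the semimartingale identity $q(y,Y_{\tau \wedge t}) = (\tau \wedge t) + M_t$, with $M$ the local martingale supplied by Lemma~\ref{propo q-t mart}, with a Garsia-Neveu-type integration by parts. Write $A_t = \tau\wedge t$, $Z_t = q(y,Y_{\tau\wedge t})$ and $M_t = Z_t - A_t$. Then $M$ is a continuous $P_y$-local martingale (Lemma~\ref{propo q-t mart}, stopped at $\tau\le H_{l,r}$), $A$ is continuous nondecreasing with $A_0 = 0$, and $Z \ge 0$ is continuous with $Z_0 = q(y,y) = 0$. Note that the class-(D) hypothesis on $Z^\alpha$ passes to $Z$ itself via Hölder's inequality applied stopping time by stopping time, so $Z$ is a uniformly integrable continuous submartingale with terminal value $Z_\infty = q(y, Y_\tau)$.

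I would first dispose of $\alpha = 1$ simultaneously with $\tau < \infty$. Localize $M$ by $\sigma_n = \inf\{s \ge 0 : Z_s \ge n\}$, apply optional sampling at $\sigma_n \wedge t$ to get $E_y[A_{\sigma_n\wedge t}] = E_y[Z_{\sigma_n\wedge t}]$, and let $n,t \to \infty$: monotone convergence on the left and UI on the right yield $E_y[\tau] = E_y[q(y,Y_\tau)] < \infty$. Since $\alpha^\alpha = 1$, this already delivers \eqref{eq:mom_gen_st} in the case $\alpha = 1$ and shows $\tau < \infty$ $P_y$-a.s.

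For $\alpha > 1$, the chain rule for the continuous nondecreasing $A$ yields $A_t^\alpha = \alpha \int_0^t A_s^{\alpha-1} dA_s$. Substituting $dA = dZ - dM$ and applying Itô's integration-by-parts to the product $A^{\alpha-1} Z$ (the bracket $[A^{\alpha-1}, Z]$ vanishes because $A^{\alpha-1}$ is continuous of bounded variation) gives
\begin{equation*}
A_t^\alpha \;=\; \alpha A_t^{\alpha-1} Z_t \;-\; \alpha(\alpha-1) \int_0^t Z_s A_s^{\alpha-2}\, dA_s \;-\; \alpha \int_0^t A_s^{\alpha-1}\, dM_s.
\end{equation*}
Evaluating at $\sigma_n \wedge t$ (on which $A^{\alpha-1}$ and $M$ are bounded, so the stochastic integral is a genuine $L^1$-martingale with zero expectation) and discarding the middle term, which is nonnegative since $Z, A, dA \ge 0$, one obtains $E_y[A_{\sigma_n\wedge t}^\alpha] \le \alpha\, E_y[A_{\sigma_n\wedge t}^{\alpha-1} Z_{\sigma_n\wedge t}]$. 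Hölder's inequality with conjugate exponents $\alpha/(\alpha-1)$ and $\alpha$, combined with the submartingale estimate $E_y[Z_{\sigma_n\wedge t}^\alpha] \le E_y[q(y,Y_\tau)^\alpha]$ (from class-(D) of $Z^\alpha$), gives, after dividing through by $(E_y[A_{\sigma_n\wedge t}^\alpha])^{(\alpha-1)/\alpha}$, the bound $E_y[A_{\sigma_n\wedge t}^\alpha] \le \alpha^\alpha E_y[q(y,Y_\tau)^\alpha]$; monotone convergence $n, t \to \infty$ closes~\eqref{eq:mom_gen_st}.

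The main obstacle lies in the bookkeeping around the stochastic integral $\int A^{\alpha-1} dM$: the localization must simultaneously keep $A^{\alpha-1}$ bounded and keep $M$ under control so that the integral is truly an $L^1$-martingale with vanishing expectation, rather than merely a local martingale. A minor subtlety when $\alpha \in (1, 2)$ is the singularity of $A_s^{\alpha-2}$ at $s = 0$; this is harmless because $Z_s \to 0$ as $s \to 0$ and we only exploit the nonnegativity of the middle term in the inequality.
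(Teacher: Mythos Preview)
Your proof is correct and follows essentially the same route as the paper: both rely on the product formula/integration by parts applied to $t^{\alpha-1}q(y,Y_t)$ (equivalently, to $A_t^{\alpha-1}Z_t$), discard the same nonnegative remainder $(\alpha-1)\int s^{\alpha-2}q(y,Y_s)\,ds$, localize, apply H\"older, and pass to the limit using the class~(D) hypothesis. The only cosmetic differences are that you frame everything with the stopped process $A_t=\tau\wedge t$ from the outset and deduce $\tau<\infty$ from the $\alpha=1$ identity $E_y[\tau]=E_y[q(y,Y_\tau)]<\infty$, whereas the paper first argues $\tau<\infty$ by contradiction with class~(D) and then handles the limit via Vitali rather than your submartingale bound.
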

\begin{proof}
If $y\in  I\setminus I^\circ$, then $\tau=0$ and \eqref{eq:mom_gen_st} is satisfied. For the remainder of the proof we assume that $y\in I^\circ$. 
We first show by contradiction that $\tau<\infty$ $P_y$-a.s. So assume that
$P_y(\tau=\infty)>0$. Since on $\{\tau=\infty\}$ we necessarily have $\tau=H_{l,r}(Y)$, we
obtain 
$$
\limsup_{t\to \infty}q^\alpha(y,Y_{t\wedge \tau})=\infty \qquad P_y\text{-a.s.\ on } \{\tau=\infty\}.
$$
For any $n\in \N$ let $\sigma_n=\inf\{t\in [0,\infty): q^\alpha(y,Y_{\tau\wedge t})\ge n\}$.
Then we obtain that 
$$
\lim_{n\to \infty}q^\alpha(y,Y_{\sigma_n\wedge \tau})=\infty \qquad P_y\text{-a.s.\ on } \{\tau=\infty\},
$$
which contradicts the uniform integrability of $\{q^\alpha(y,Y_{\sigma_n\wedge \tau}) \}_{n\in \N}$. Thus, we proved that $\tau<\infty$ $P_y$-a.s. and, in particular, that $Y_\tau$ on the right-hand side of \eqref{eq:mom_gen_st} is well-defined.

According to Lemma~\ref{propo q-t mart} the process
$N_t := q(y,Y_t) - (t \wedge H_{l,r}(Y))$, $t\ge 0$, is a $P_y$-local martingale.
The product formula yields for all $t \in [0, H_{l,r}(Y)]$
\ben\label{eq:prod_form}
t^{\alpha-1}q(y,Y_t)&=& t^{\alpha-1}N_t + t^\alpha =(\alpha-1) \int_0^t s^{\alpha-2} N_s ds + \int_0^t s^{\alpha-1} dN_s + t^\alpha \nonumber \\
&=& (\alpha-1)\int_0^t s^{\alpha-2} q(y,Y_s) ds + \int_0^t s^{\alpha-1} dN_s +\frac{1}{\alpha} t^\alpha\nonumber \\
&\ge& \int_0^t s^{\alpha-1} dN_s + \frac1\alpha t^\alpha. \label{hanoi aux ineq 1}
\een
Note that $(\int_0^t s^{\alpha-1} dN_s)_{t\ge 0}$ is a local martingale and let $(\tau'_n)_{n\in \N}$ be a localizing sequence for it. Set $\tau_n:=n\wedge\tau'_n$ for all $n\in \N$. In particular, it holds $E_y[\tau_n^\alpha]<\infty$ for all $n\in \N$.
With inequality \eqref{hanoi aux ineq 1} and H\"older's inequality we obtain for all $n\in \N$ that
\be
E_y[(\tau_n\wedge\tau)^\alpha]\le \alpha E_y[(\tau_n\wedge\tau)^{\alpha-1}q(y,Y_{\tau_n\wedge\tau})]\le \alpha (E_y[(\tau_n\wedge\tau)^\alpha])^{\frac{\alpha-1}{\alpha}}(E_y[q^\alpha(y,Y_{\tau_n\wedge\tau})])^{\frac{1}{\alpha}}.
\ee
This implies for all $n\in \N$
\ben\label{eq:140914a12}
E_y[(\tau_n\wedge\tau)^\alpha]\le \alpha^\alpha E_y[q^\alpha(y,Y_{\tau_n\wedge\tau})].
\een
By monotone convergence the left-hand side converges to $E_y[\tau^\alpha]$ as $n\to\infty$. Since the process $(q^\alpha(y,Y_{\tau\wedge t}))_{t\in [0,\infty)}$ is of class (D), it follows that the family $(q^\alpha(y,Y_{\tau_n\wedge\tau}))_{n\in \N}$ is uniformly integrable. Vitali's convergence theorem implies that $E_y[q^\alpha(y,Y_{\tau_n\wedge\tau})] \to E_y[q^\alpha(y,Y_{\tau})]$ as $n\to \infty$. Therefore we obtain
\be
E_y[\tau^\alpha ]\le \alpha^\alpha E_y[q^\alpha (y,Y_\tau)],
\ee
which is precisely~\eqref{eq:140914a11}.
\end{proof}

\begin{remark}
Under the assumption of Theorem~\ref{thm:mom_gen_st} we have equality in~\eqref{eq:mom_gen_st} for $\alpha=1$, i.e., $E_y[\tau]=E_y[q(y,Y_\tau)]$.
Indeed, the inequality~$\le$ is provided by~\eqref{eq:mom_gen_st},
while, for the reverse inequality~$\ge$, use that equality holds in~\eqref{eq:prod_form} for $\alpha=1$,
localize~\eqref{eq:prod_form}, compute expectations of the both sides and apply Fatou's lemma.
\end{remark}

From Theorem~\ref{thm:mom_gen_st} we obtain the following
moment estimate for first exit times.

\begin{corollary}\label{2moment}
Let $\alpha \in [1,\infty)$, let $y\in I$ and let $a\in [0,\infty)$
 be such that $[y-a,y+a]\subseteq I$.
 Then it holds that
\ben\label{eq:140914a11}
E_y[(H_{y-a,y+a}(Y))^\alpha] \le \frac{\alpha^\alpha}{2}\left( q^\alpha(y,y-a)+q^\alpha(y,y+a)\right).
\een
\end{corollary}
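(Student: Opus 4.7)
The plan is to apply Theorem~\ref{thm:mom_gen_st} directly with the stopping time $\tau=H_{y-a,y+a}(Y)$. I would first dispose of the degenerate cases: if $a=0$, then $\tau=0$ and both sides equal zero; and if $y\in I\setminus I^\circ$ (so $y\in\{l,r\}$), the assumption $[y-a,y+a]\subseteq I$ forces $a=0$. So I may assume $y\in I^\circ$ and $a>0$.

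Next I would verify the hypotheses of Theorem~\ref{thm:mom_gen_st}. The inclusion $[y-a,y+a]\subseteq I$ implies $\tau\le H_{l,r}(Y)$ pathwise. For the class~(D) property, the key observation is that on $[\![0,\tau]\!]$ the process $Y$ stays in $[y-a,y+a]$, and, as noted right after the definition of $q$, the map $q(y,\cdot)$ is decreasing on $[l,y]$ and increasing on $[y,r]$. Hence for every $t\ge0$,
$$
q^\alpha(y,Y_{\tau\wedge t})\le\max\bigl\{q^\alpha(y,y-a),\,q^\alpha(y,y+a)\bigr\},
$$
and the right-hand side is finite because $y\pm a\in I$ (recall $q$ is finite on $I^\circ\times I$ by Feller's test~\eqref{fellerl}--\eqref{fellerr}). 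Thus the process $(q^\alpha(y,Y_{\tau\wedge t}))_{t\ge0}$ is uniformly bounded, so certainly of class~(D) under $P_y$.

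Theorem~\ref{thm:mom_gen_st} then yields $\tau<\infty$ $P_y$-a.s.\ and
$$
E_y[\tau^\alpha]\le\alpha^\alpha\,E_y[q^\alpha(y,Y_\tau)].
$$
It remains to evaluate the right-hand side. Since $Y$ is in natural scale and $[y-a,y+a]\subseteq I$, the stopped process $(Y_{\tau\wedge t})_{t\ge0}$ is a bounded $P_y$-martingale, so optional stopping gives $E_y[Y_\tau]=y$. Because $Y_\tau\in\{y-a,y+a\}$ $P_y$-a.s., this forces $P_y(Y_\tau=y-a)=P_y(Y_\tau=y+a)=\tfrac12$, hence
$$
E_y[q^\alpha(y,Y_\tau)]=\tfrac12\bigl(q^\alpha(y,y-a)+q^\alpha(y,y+a)\bigr),
$$
which combined with the previous inequality yields~\eqref{eq:140914a11}. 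The only substantive step is the verification of class~(D), and this is immediate from monotonicity of $q(y,\cdot)$ away from $y$; everything else is bookkeeping.
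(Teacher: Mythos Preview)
Your proof is correct and follows the same approach as the paper: verify $\tau\le H_{l,r}(Y)$ and that $(q^\alpha(y,Y_{\tau\wedge t}))_{t\ge0}$ is bounded (hence of class~(D)), then apply Theorem~\ref{thm:mom_gen_st}. You spell out more explicitly than the paper the handling of the degenerate cases and the computation $E_y[q^\alpha(y,Y_\tau)]=\tfrac12\bigl(q^\alpha(y,y-a)+q^\alpha(y,y+a)\bigr)$ via the optional stopping theorem, which the paper leaves implicit.
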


\begin{proof}
Clearly, $H_{y-a,y+a}(Y)\le H_{l,r}(Y)$, $P_y$-a.s.
Moreover, under~$P_y$,
the process $(q^\alpha(y,Y_{H_{y-a,y+a}(Y)\wedge t}))_{t\in [0,\infty)}$ is bounded
(see \eqref{fellerl} and~\eqref{fellerr})
and hence of class~(D).
Inequality \eqref{eq:140914a11} then follows from Theorem~\ref{thm:mom_gen_st}.
\end{proof}

\section{Proof of Theorem~\protect\ref{main thm sec 1}}
\label{sec fctal lim thm}
In this section we prove Theorem~\ref{main thm sec 1} and show that under 
Condition~(A) the processes $(X^h)_{h\in (0,\ol h)}$ converge in distribution to $Y$.
We use the embedding stopping times $(\tau^h_k)_{k \in \Znn}$ constructed in Section~\ref{sec 2} 
and control the temporal errors $|\tau^h_k-kh|$, $h\in (0,\ol h)$, $k\in \N_0$. To this end, for every $h\in (0,\ol h)$ we apply the Doob decomposition to the process $(\tau^h_k-kh)_{k\in \N_0}$ and write $\tau^h_k-kh=M^h_k+A^h_k$, $k\in \N_0$, for a martingale $M^h$ and a predictable process $A^h$. 
Condition~(A) guarantees that $A^h$ converges to $0$ as $h\to 0$ 
(see Proposition~\ref{theo:conv_A} below). We show that also the martingale part $M^h$ can be nicely controlled (see Proposition~\ref{coro sec 3} below). 


For all $h\in (0,\ol h)$ let $(\tau^h_k)_{k\in \N}$ be the sequence of embedding stopping times defined in Section~\ref{sec 2}. Then we have the following result about the time lags $\rho^h_k=\tau^h_{k}-\tau^h_{k-1}$, $h\in (0,\ol h)$, $k\in \N$, between consecutive embedding stopping times.

\begin{lemma}\label{l:st_ui}
Let $\alpha \in [1,\infty)$, $h\in (0,\ol h)$ and $y \in I^\circ$.
Then it holds that
\begin{equation}
\sup_{k \in \N }\left \| \rho^h_k
\right \|_{L^\alpha(P_y)}
\le 2h+
2^{1-1/\alpha} \alpha \sup_{z\in I}\left( E_z[H_{z-a_h(z), z + a_h(z)}(Y)]\right).
\end{equation}
\end{lemma}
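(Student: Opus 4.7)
The plan is to decompose $\rho^h_k=\tau^h_k-\tau^h_{k-1}$ into the exit time $\sigma^h_k-\tau^h_{k-1}$ and the deterministic extension $\tau^h_k-\sigma^h_k$, bound each piece separately, and then apply the triangle inequality in $L^\alpha(P_y)$. The extension is by construction bounded by $2(h-c)\leq 2h$ since $c = E_y[\sigma^h_{k}-\tau^h_{k-1}\mid A]\geq 0$; thus it contributes at most the $2h$ term in the target estimate.

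For the first piece I would use the strong Markov property at $\tau^h_{k-1}$: conditionally on $\cF_{\tau^h_{k-1}}$, the random variable $\sigma^h_k-\tau^h_{k-1}$ has the same law as $H_{z-a_h(z),z+a_h(z)}(Y)$ under $P_z$ with $z=Y_{\tau^h_{k-1}}$. Corollary~\ref{2moment} then gives, with $a=a_h(z)$ and $H=H_{z-a,z+a}(Y)$,
$$
E_z[H^\alpha]\leq\frac{\alpha^\alpha}2\bigl(q^\alpha(z,z-a)+q^\alpha(z,z+a)\bigr).
$$
Since $Y$ is in natural scale, $Y_H$ takes the values $z\pm a$ each with probability $\tfrac12$, so from Remark after Theorem~\ref{thm:mom_gen_st} (the $\alpha=1$ equality, equivalently \eqref{eq:03012020a3} combined with \eqref{eq:03012020a2}) one gets
$$
E_z[H]=\tfrac12\bigl(q(z,z-a)+q(z,z+a)\bigr).
$$

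The key elementary step is the inequality $p^\alpha+q^\alpha\leq(p+q)^\alpha$ valid for all $p,q\geq 0$ and $\alpha\geq 1$ (proved by writing $p^\alpha+q^\alpha=(p+q)^\alpha(t^\alpha+(1-t)^\alpha)$ with $t=p/(p+q)$ and noting $t^\alpha+(1-t)^\alpha\leq 1$). Applying this with $p=q(z,z-a)$ and $q=q(z,z+a)$ yields
$$
E_z[H^\alpha]\leq \frac{\alpha^\alpha}{2}\bigl(q(z,z-a)+q(z,z+a)\bigr)^{\alpha}=\frac{\alpha^\alpha}{2}\bigl(2E_z[H]\bigr)^{\alpha}=2^{\alpha-1}\alpha^\alpha \bigl(E_z[H]\bigr)^{\alpha},
$$
i.e.\ $\|H\|_{L^\alpha(P_z)}\leq 2^{1-1/\alpha}\alpha\, E_z[H]$. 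Taking expectation under $P_y$ of the conditional $\alpha$-th moment and taking $\sup$ over $z\in I$ yields
$$
\bigl\|\sigma^h_k-\tau^h_{k-1}\bigr\|_{L^\alpha(P_y)}\leq 2^{1-1/\alpha}\alpha\sup_{z\in I}E_z[H_{z-a_h(z),z+a_h(z)}(Y)].
$$
Combining with the deterministic bound $\|\tau^h_k-\sigma^h_k\|_{L^\infty}\leq 2h$ and applying Minkowski's inequality gives the claim, uniformly in $k$.

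No step looks particularly delicate; the only point to be careful about is the conversion from the $q$-based bound of Corollary~\ref{2moment} to one involving $E_z[H]$ itself, which is precisely where natural scale together with the elementary inequality $p^\alpha+q^\alpha\leq(p+q)^\alpha$ enters and produces the correct constant $2^{1-1/\alpha}\alpha$.
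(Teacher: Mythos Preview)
Your proof is correct and follows essentially the same route as the paper: bound $\rho^h_k$ by the exit time $\sigma^h_k-\tau^h_{k-1}$ plus the deterministic extension $\leq 2h$, apply the strong Markov property at $\tau^h_{k-1}$, invoke Corollary~\ref{2moment}, and use the elementary inequality $p^\alpha+q^\alpha\leq (p+q)^\alpha$ together with $q(z,z-a)+q(z,z+a)=2E_z[H]$ to obtain the constant $2^{1-1/\alpha}\alpha$. The only cosmetic difference is that the paper first bounds $\rho^h_k\leq H+2h$ and then applies the triangle inequality inside $\|H+2h\|_{L^\alpha(P_z)}$, whereas you split $\rho^h_k$ and apply Minkowski directly; the estimates coincide.
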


\begin{proof}
By construction of the sequence $(\tau_k^h)_{k\in \N_0}$
(in particular, recall~\eqref{eq:def_tau}) it holds for all $k\in \N$ that
$$
\rho_k^h=\tau^h_{k}-\tau^h_{k-1}\le \inf\left\{t\ge \tau^h_{k-1}\colon |Y_t-Y_{\tau^h_{k-1}}|=a_h(Y_{\tau^h_{k-1}})\right\}+2h.
$$
This and the strong Markov property of $Y$ imply for all $k\in \N$ that
\begin{equation}\label{eq:def_tau_bound}
\begin{split}
E_y\left[\left(\tau^h_{k}-\tau^h_{k-1}\right)^\alpha \right]
&\le 
E_y\left[ E_{z}\left[\left(H_{z-a_h(z),z+a_h(z)}(Y)+2h\right)^\alpha \right] \Big |_{z= Y_{\tau^h_{k-1}}}\right]\\
&\le \sup_{z\in I}E_{z}\left[\left(H_{z-a_h(z),z+a_h(z)}(Y)+2h\right)^\alpha \right].
\end{split}
\end{equation}
Notice that
\begin{equation}\label{eq:24102017a1}
a^\alpha+b^\alpha\le (a+b)^\alpha \quad \text{for } a,b \in [0,\infty),
\end{equation}
because the function $x\mapsto x^\alpha$, $x\in [0,\infty)$, is convex, increasing and starts in zero.
It follows from the triangle inequality, Corollary~\ref{2moment} and \eqref{eq:24102017a1}  that for all $z\in I$ it holds 
\be 
\|H_{z-a_h(z),z+a_h(z)}(Y)+2h\|_{L^\alpha(P_z)}
&\le  & \left(E_z[H_{z-a_h(z), z + a_h(z)}(Y)^\alpha]\right)^{\frac{1}{\alpha}}+2h\\
& \le & 2^{-1/\alpha} \alpha \left( q^\alpha(z,z-a_h(z))+q^\alpha(z,z+a_h(z))\right)^{\frac{1}{\alpha}}+2h\\
& \le & 2^{-1/\alpha} \alpha\left( q(z,z-a_h(z))+q(z,z+a_h(z))\right)+2h\\
&=&2^{1-1/\alpha} \alpha E_z[H_{z-a_h(z), z + a_h(z)}(Y)]+2h.
\ee
Combining this with \eqref{eq:def_tau_bound} completes the proof.
\end{proof}
Below, for a random variable $\xi$,
it is convenient to use the notation
$$
\|\xi\|_{L^\alpha(P_y)}=\left(
E_y|\xi|^\alpha\right)^{1/\alpha}
$$
for all $\alpha\in(0,\infty)$, even though it is not a norm
for $\alpha\in(0,1)$. Notice that
$\|\xi\|_{L^\alpha(P_y)}\le\|\xi\|_{L^\beta(P_y)}$
for $0<\alpha<\beta$ by the Jensen inequality.

\begin{propo}\label{coro sec 3}
Let $\alpha \in (0,\infty)$.
Then there exists a constant $C(\alpha)\in (0,\infty)$
such that, for all $T\in(0,\infty)$, $y \in I^\circ$ and $h\in (0,\ol h)$,
it holds that
\begin{multline}\label{eq:lalphamart}
\left\|\sup_{k\in \{1,\ldots, \lfloor T/h\rfloor \}}\left|\tau^h_k-\sum_{n=1}^k E[\rho^h_n|\mathcal F_{\tau^h_{n-1}}]\right|\right\|_{L^\alpha(P_y)}\\
\le C(\alpha)\sqrt{T}
\left(\sqrt{h}+
\frac{\sup_{z\in I}\left( E_z[H_{z-a_h(z), z + a_h(z)}(Y)]\right)}{\sqrt{h}}
\right).
\end{multline}
\end{propo}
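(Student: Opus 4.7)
My plan is to recognise the quantity on the left as the maximum of a discrete-time martingale, estimate its $L^\alpha$-norm via the Burkholder--Davis--Gundy inequality, and then control the resulting square function using Lemma~\ref{l:st_ui}.

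First, I fix $y\in I^\circ$, $h\in(0,\ol h)$, $T\in(0,\infty)$, set $N=\lfloor T/h\rfloor$, and introduce the martingale differences $d_n := \rho^h_n - E_y[\rho^h_n\mid\cF_{\tau^h_{n-1}}]$, $n\in\bbN$, so that $M^h_k := \tau^h_k - \sum_{n=1}^k E_y[\rho^h_n\mid\cF_{\tau^h_{n-1}}]$, $k\in\bbN_0$, is an $(\cF_{\tau^h_k})$-martingale whose required integrability is supplied by Lemma~\ref{l:st_ui}. Since $L^\alpha(P_y)$-norms on a probability space are non-decreasing in $\alpha$, Lyapunov's inequality reduces the task to establishing \eqref{eq:lalphamart} for $\alpha\ge 2$; the case $\alpha\in(0,2)$ then follows by enlarging $C(\alpha)$ if necessary.

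For $\alpha\ge 2$, the discrete Burkholder--Davis--Gundy inequality furnishes a universal constant $B_\alpha\in(0,\infty)$ with
$$
\Big\|\sup_{k\in\{1,\dots,N\}}|M^h_k|\Big\|_{L^\alpha(P_y)} \le B_\alpha\, \Big\|\Big(\sum_{n=1}^N d_n^2\Big)^{\!1/2}\Big\|_{L^\alpha(P_y)}.
$$
Since $\alpha/2\ge 1$, the power-mean inequality applied to $x\mapsto x^{\alpha/2}$ gives $\big(\sum_{n=1}^N d_n^2\big)^{\alpha/2}\le N^{\alpha/2-1}\sum_{n=1}^N|d_n|^\alpha$, which, combined with the conditional Jensen bound $\|d_n\|_{L^\alpha(P_y)}\le 2\|\rho^h_n\|_{L^\alpha(P_y)}$, yields
$$
\Big\|\Big(\sum_{n=1}^N d_n^2\Big)^{\!1/2}\Big\|_{L^\alpha(P_y)} \le N^{1/2}\sup_{n\le N}\|d_n\|_{L^\alpha(P_y)} \le 2\, N^{1/2}\sup_{n\in\bbN}\|\rho^h_n\|_{L^\alpha(P_y)}.
$$

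Finally, Lemma~\ref{l:st_ui} bounds the right-hand side by $2\,N^{1/2}\bigl(2h + 2^{1-1/\alpha}\alpha\, c_h\bigr)$, where $c_h:=\sup_{z\in I}E_z[H_{z-a_h(z),z+a_h(z)}(Y)]$; using $N\le T/h$ and absorbing numerical factors into $C(\alpha)$ produces
$$
\Big\|\sup_{k\le N}|M^h_k|\Big\|_{L^\alpha(P_y)} \le 2B_\alpha\sqrt{T/h}\,\bigl(2h + 2^{1-1/\alpha}\alpha\, c_h\bigr) \le C(\alpha)\sqrt{T}\Big(\sqrt{h}+\tfrac{c_h}{\sqrt{h}}\Big),
$$
which is exactly \eqref{eq:lalphamart}. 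I do not foresee a serious obstacle: this is a routine martingale estimate, with the only mild subtlety being the reduction from arbitrary $\alpha\in(0,\infty)$ to $\alpha\ge 2$, handled by Lyapunov's inequality as above.
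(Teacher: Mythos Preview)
Your proof is correct and follows essentially the same approach as the paper: reduce to $\alpha\ge 2$ by monotonicity of $L^\alpha$-norms, apply the discrete Burkholder--Davis--Gundy inequality to the martingale $M^h_k$, control the square function via the power-mean (Jensen) inequality and the bound $\|d_n\|_{L^\alpha}\le 2\|\rho^h_n\|_{L^\alpha}$, and conclude with Lemma~\ref{l:st_ui} together with $N\le T/h$. The only cosmetic difference is that the paper works with $\alpha$-th moments throughout and takes the $1/\alpha$-th root at the end, whereas you phrase the same estimates directly in terms of $L^\alpha$-norms.
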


\begin{proof}
Without loss of generality we consider $\alpha\in[2,\infty)$.
Throughout the proof we fix $T\in(0,\infty)$, $y\in I^\circ$,
$h\in (0,\ol h)$ and let $N=\lfloor T/h\rfloor$.
For all $k\in \{0,\ldots, N\}$ we define $\mathcal G_k=\mathcal F_{\tau^h_k}$ and $M_k=\tau^h_k-\sum_{n=1}^k E[\rho^h_n|\mathcal G_{n-1}]$. Notice that $(M_k)_{k\in \{0,\ldots, N\}}$ is a $(\mathcal G_k)_{k\in \{0,\ldots, N\}}$-martingale. The Burkholder-Davis-Gundy inequality ensures that there exists a constant $\wt C(\alpha) \in (0,\infty)$ (only depending on $\alpha$) such that
\begin{equation*}
\begin{split}
E_y\left[\sup_{k\in \{1,\ldots, N\}}\left|\tau^h_k-\sum_{n=1}^k E[\rho^h_n|\mathcal G_{n-1}]\right|^\alpha\right]&\leq \wt C(\alpha) E_y\left[\left(\sum_{k=1}^N (M_k-M_{k-1})^2\right)^{\frac{\alpha}{2}} \right]\\
&=\wt C(\alpha) E_y\left[\left(\sum_{k=1}^N  \left(\rho^h_k-E[\rho^h_k|\mathcal G_{k-1}]\right)^2\right)^{\frac{\alpha}{2}} \right].
\end{split}
\end{equation*}
This, together with Jensen's inequality, proves that
\begin{equation}\label{eq:01091017}
\begin{split}
E_y\left[\sup_{k\in \{1,\ldots, N \}}\left|\tau^h_k-\sum_{n=1}^k E[\rho^h_k|\mathcal G_{k-1}]\right|^\alpha\right]
&\leq \wt C(\alpha) N^{\frac{\alpha}{2}-1} \sum_{k=1}^N E_y\left[ \left|\rho^h_k-E[\rho^h_k|\mathcal G_{k-1}]\right|^{\alpha} \right]\\
&\le 
\wt C(\alpha) 2^\alpha N^{\frac{\alpha}{2}-1} \sum_{k=1}^N  E_y\left[ \left|\rho^h_k\right|^\alpha\right]\\
&\le 
\wt C(\alpha) 2^\alpha N^{\frac{\alpha}{2}} \sup_{k\in \N}  E_y\left[ \left|\rho^h_k\right|^\alpha\right]\\
&\le 
\wt C(\alpha) 2^\alpha T^{\frac{\alpha}{2}} \left(\frac{\sup_{k \in \N} \left\| \rho^h_k\right\|_{L^\alpha(P_y)}}{\sqrt{h}} \right)^\alpha.
\end{split}
\end{equation}
Then Lemma~\ref{l:st_ui} proves~\eqref{eq:lalphamart}.
\end{proof}

\begin{propo}\label{theo:conv_A}
Let $\alpha \in (0,\infty)$.
Then, for all $T\in(0,\infty)$, $y\in I^\circ$ and $h\in (0,\ol h)$, it holds that
\begin{equation}\label{eq:lalphapred}
\begin{split}
\left\|\sup_{k\in \{1,\ldots, \lfloor T/h \rfloor \}}\left|\left(\sum_{n=1}^k E[\rho^h_n|\mathcal F_{\tau^h_{n-1}}]\right)-kh\right|\right\|_{L^\alpha(P_y)}
&\le \frac{T}{h} \sup_{z\in I_h}
\left|
E_z[H_{z-a_h(z),z+a_h(z)}(Y)]
-h
\right|.
\end{split}
\end{equation}
\end{propo}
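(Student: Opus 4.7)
My plan is to turn the bound into a purely pathwise (deterministic) estimate, so that the $L^\alpha(P_y)$-norm plays no real role and reduces to a constant.

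First, I would unpack the conditional expectations using Proposition~\ref{main thm sec 2}. By that proposition together with identity~\eqref{eq:03012020a3} from Remark~\ref{rem:03012020a1}, for every $n\in\bbN$,
\[
E_y[\rho^h_n\mid\cF_{\tau^h_{n-1}}]
=\begin{cases}
E_z[H_{z-a_h(z),z+a_h(z)}(Y)]\big|_{z=Y_{\tau^h_{n-1}}}, & \text{if } Y_{\tau^h_{n-1}}\in I_h,\\[1mm]
h, & \text{if } Y_{\tau^h_{n-1}}\notin I_h.
\end{cases}
\]
Thus the increment $E_y[\rho^h_n\mid\cF_{\tau^h_{n-1}}]-h$ is identically zero on $\{Y_{\tau^h_{n-1}}\notin I_h\}$, and on $\{Y_{\tau^h_{n-1}}\in I_h\}$ its absolute value is pointwise dominated by $\sup_{z\in I_h}\bigl|E_z[H_{z-a_h(z),z+a_h(z)}(Y)]-h\bigr|$.

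Next, I would telescope and estimate. Writing
\[
\left(\sum_{n=1}^k E_y[\rho^h_n\mid\cF_{\tau^h_{n-1}}]\right)-kh
=\sum_{n=1}^k\bigl(E_y[\rho^h_n\mid\cF_{\tau^h_{n-1}}]-h\bigr),
\]
the triangle inequality and the pointwise bound above give, $P_y$-a.s.,
\[
\sup_{k\in\{1,\dots,\lfloor T/h\rfloor\}}\left|\sum_{n=1}^k\bigl(E_y[\rho^h_n\mid\cF_{\tau^h_{n-1}}]-h\bigr)\right|
\le \lfloor T/h\rfloor\cdot\sup_{z\in I_h}\bigl|E_z[H_{z-a_h(z),z+a_h(z)}(Y)]-h\bigr|.
\]
Since the right-hand side is a deterministic constant and $\lfloor T/h\rfloor\le T/h$, taking $L^\alpha(P_y)$-norms on both sides yields the claimed inequality~\eqref{eq:lalphapred}.

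There is no real obstacle here; the whole point is that the predictable part of the Doob decomposition is controlled on the event $\{Y_{\tau^h_{n-1}}\in I_h\}$ by the analytic quantity appearing in Condition~(A) (via Remark~\ref{rem:03012020a1}) and is exactly $h$ off that event by the design of the stopping times $\tau^h_n$ in Section~\ref{sec 2}. The only point to be a touch careful about is that $\alpha\in(0,\infty)$, in particular possibly $\alpha<1$, so $\|\cdot\|_{L^\alpha(P_y)}$ is not a norm; however, the bound is deterministic, so $\|c\|_{L^\alpha(P_y)}=c$ for any nonnegative constant $c$ regardless of~$\alpha$, and the estimate is immediate.
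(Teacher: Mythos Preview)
Your proof is correct and follows essentially the same approach as the paper: both identify the conditional expectations via Proposition~\ref{main thm sec 2}, observe that the increment vanishes off $I_h$ and is uniformly bounded on $I_h$ by the analytic quantity, and then sum. The only cosmetic difference is that you apply the triangle inequality pointwise before taking the $L^\alpha$-norm, whereas the paper applies it in $L^\alpha$ (after reducing to $\alpha\ge 1$); your version is slightly cleaner since it sidesteps that reduction.
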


\begin{proof}
Without loss of generality we consider $\alpha\in[1,\infty)$.
Throughout the proof we fix $T\in(0,\infty)$, $y\in I^\circ$,
$h\in (0,\ol h)$ and let $N=\lfloor T/h\rfloor$.
For all $k\in \{0,\ldots, N\}$ we define $\mathcal G_k=\mathcal F_{\tau^h_k}$.
The triangle inequality ensures that
\begin{equation}\label{eq:231119a}
\begin{split}
\left\|\sup_{k\in \{1,\ldots, N \}}\left|\left(\sum_{n=1}^k E[\rho^h_n|\mathcal G_{n-1}]\right)-kh\right|\right\|_{L^\alpha(P_y)}
&=
\left\|\sup_{k\in \{1,\ldots,N \}}\left|\left(\sum_{n=1}^k E[\rho^h_n|\mathcal G_{n-1}]-h\right)\right|\right\|_{L^\alpha(P_y)}
\\
&\le \sum_{n=1}^N
\left\|  (E[\rho^h_n|\mathcal G_{n-1}]-h)\right\|_{L^\alpha(P_y)}.
\end{split}
\end{equation}
By Proposition~\ref{main thm sec 2}, on the event $\{Y_{\tau^h_{n-1}} \in I_h\}$ we have 
\begin{equation}
\begin{split}
&\left|E_y[\rho^h_n|\mathcal G_{n-1}]-h\right| \\
&=
\left|
\frac{1}{2}\int_{(Y_{\tau^h_{n-1}}-a_h(Y_{\tau^h_{n-1}}),Y_{\tau^h_{n-1}}+a_h(Y_{\tau^h_{n-1}}))} (a_h(Y_{\tau^h_{n-1}})-|u-Y_{\tau^h_{n-1}}|)\,m(du)
-h
\right|
\\
&\le 
\sup_{z\in I_h } 
\left|
\frac{1}{2}\int_{(z-a_h(z),z+a_h(z))} (a_h(z)-|u-z|)\,m(du)
-h
\right|\\
&=\sup_{z\in I_h} 
\left|
E_z[H_{z-a_h(z),z+a_h(z)}(Y)]
-h
\right|.
\end{split}
\end{equation}
On the event $\{Y_{\tau^h_{n-1}} \notin I_h\}$ we have $|E_y[\rho^h_n|\mathcal G_{n-1}]-h| = 0$. Therefore, 
\begin{equation}
\begin{split}
\left\|\sup_{k\in \{1,\ldots, N \}}\left|\left(\sum_{n=1}^k E[\rho^h_n|\mathcal G_{n-1}]\right)-kh\right|\right\|_{L^\alpha(P_y)}
&\le N \sup_{z\in I_h } 
\left|
E_z[H_{z-a_h(z),z+a_h(z)}(Y)]
-h
\right|
\\
&\le \frac{T}{h} \sup_{z\in I_h } 
\left|
E_z[H_{z-a_h(z),z+a_h(z)}(Y)]
-h
\right| .
\end{split}
\end{equation}
This proves~\eqref{eq:lalphapred}.
\end{proof}

By combining the two preceding theorems we obtain a result
about uniform in $k$ convergence of the embedding stopping times
$(\tau^h_k)$ in spaces $L^\alpha(P_y)$, as $h\to0$.
To this end, we impose a slightly stronger condition than
Condition~(A), namely,
\begin{equation}\label{eq:26112018a2}
\sup_{y\in I_h}  \left|
\frac{1}{2}\int_{(y-a_h(y),y+a_h(y))} (a_h(y)-|u-y|)\,m(du)
-h
\right|
\in o(h), \quad h \to 0.
\end{equation}

\begin{corollary}\label{cor:26112018a1}
Assume~\eqref{eq:26112018a2}.
Let $\alpha \in (0,\infty)$, $T\in (0,\infty)$ and $y\in I^\circ$.
Then it holds
$$
\sup_{k\in \{1,\ldots, \lfloor T/h \rfloor \}}\left|\tau^h_k-kh\right|
\xrightarrow[]{L^\alpha(P_y)}0,
\quad h\to 0.
$$
\end{corollary}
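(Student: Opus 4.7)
The plan is to carry out a Doob decomposition of $(\tau^h_k-kh)_{k\in\N_0}$ and dispatch the martingale part via Proposition~\ref{coro sec 3} and the predictable part via Proposition~\ref{theo:conv_A}. Since $\|\cdot\|_{L^\alpha(P_y)}\le\|\cdot\|_{L^\beta(P_y)}$ for $0<\alpha<\beta$, I may assume $\alpha\in[1,\infty)$. Writing
$$
\tau^h_k-kh=\underbrace{\Bigl(\tau^h_k-\sum_{n=1}^k E[\rho^h_n|\mathcal F_{\tau^h_{n-1}}]\Bigr)}_{=:M^h_k}+\underbrace{\Bigl(\sum_{n=1}^k E[\rho^h_n|\mathcal F_{\tau^h_{n-1}}]-kh\Bigr)}_{=:A^h_k},
$$
the triangle inequality reduces the claim to showing $\sup_{k\le\lfloor T/h\rfloor}|M^h_k|\to 0$ and $\sup_{k\le\lfloor T/h\rfloor}|A^h_k|\to 0$ in $L^\alpha(P_y)$.

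\textbf{Predictable part.} Proposition~\ref{theo:conv_A} bounds $\|\sup_k|A^h_k|\|_{L^\alpha(P_y)}$ by $(T/h)\sup_{z\in I_h}|E_z[H_{z-a_h(z),z+a_h(z)}(Y)]-h|$. By the strengthened assumption~\eqref{eq:26112018a2} this supremum is $o(h)$, so the whole expression is $(T/h)\cdot o(h)=o(1)$ as $h\to 0$.

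\textbf{Martingale part.} Proposition~\ref{coro sec 3} bounds $\|\sup_k|M^h_k|\|_{L^\alpha(P_y)}$ by
$$
C(\alpha)\sqrt{T}\Bigl(\sqrt{h}+\tfrac{1}{\sqrt{h}}\sup_{z\in I}E_z[H_{z-a_h(z),z+a_h(z)}(Y)]\Bigr).
$$
The first summand clearly vanishes. For the second, the key observation is that the sup runs over all of $I$, whereas the hypothesis controls only $z\in I_h$; hence I split the sup. For $z\in I_h$, \eqref{eq:26112018a2} together with Remark~\ref{rem:03012020a1} gives $E_z[H_{z-a_h(z),z+a_h(z)}(Y)]\le h+o(h)$ uniformly in $z$. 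For $z\in I\setminus I_h$, the very definitions of $l_h$ and $r_h$ together with identity~\eqref{eq:03012020a3} (reformulating the exit-time expectation as the relevant integral) force $E_z[H_{z-a_h(z),z+a_h(z)}(Y)]\le h$. Combining both, $\sup_{z\in I}E_z[H_{z-a_h(z),z+a_h(z)}(Y)]\le h+o(h)$, and dividing by $\sqrt{h}$ produces $\sqrt{h}+o(\sqrt{h})\to 0$.

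\textbf{Main obstacle.} The only non-routine point is reconciling the global supremum over $I$ appearing in Proposition~\ref{coro sec 3} with the supremum over $I_h$ used in~\eqref{eq:26112018a2}. This is resolved by the boundary bookkeeping outlined above: the definition of the auxiliary set $I_h$ was engineered precisely so that exit-time expectations from intervals anchored at the accessible boundary (where $z\notin I_h$) are automatically at most $h$.
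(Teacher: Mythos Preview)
Your proof is correct and follows essentially the same approach as the paper's: decompose $\tau^h_k-kh$ into the martingale part handled by Proposition~\ref{coro sec 3} and the predictable part handled by Proposition~\ref{theo:conv_A}, then split the global supremum over $I$ into $I_h$ (controlled by~\eqref{eq:26112018a2}) and $I\setminus I_h$ (where the definition of $I_h$ forces the exit-time expectation to be at most~$h$). The paper's proof is slightly terser but identical in substance.
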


\begin{proof}
The proof is an application of Propositions~\ref{coro sec 3} and~\ref{theo:conv_A}.
The fact that the right-hand side of~\eqref{eq:lalphapred}
converges to zero as $h\to0$
is a direct consequence of~\eqref{eq:26112018a2}
(also recall Remark~\ref{rem:03012020a1}).
Similarly, \eqref{eq:26112018a2} implies
$$
\frac{\sup_{z\in I_h}\left( E_z[H_{z-a_h(z), z + a_h(z)}(Y)]\right)}{\sqrt{h}}\to0,\quad h\to0.
$$
The remaining property
$$
\frac{\sup_{z\in I\setminus I_h}\left( E_z[H_{z-a_h(z), z + a_h(z)}(Y)]\right)}{\sqrt{h}}\to0,\quad h\to0
$$
(cf.~\eqref{eq:lalphamart}), follows from the fact that,
by the definition of $I_h$, we have
$$
 E_z[H_{z-a_h(z), z + a_h(z)}(Y)]\le h,\quad z\in I\setminus I_h.
$$
This concludes the proof.
\end{proof}

\begin{proof}[Proof of Theorem~\ref{main thm sec 1}]
For any $h\in(0,\ol h)$, we define the continuous-time process
$Y^h=(Y^h_t)_{t\in[0,\infty)}$
by linear interpolation of $(Y_{\tau^h_k})_{k\in\bbN_0}$.
More precisely, we set
$$
Y^h_t=Y_{\tau^h_{\lfloor t/h\rfloor}}
+\left(t/h-\lfloor t/h\rfloor\right)
\left(Y_{\tau^h_{\lfloor t/h\rfloor+1}}-Y_{\tau^h_{\lfloor t/h\rfloor}}\right),
\quad t\in[0,\infty).
$$
Notice that $Y^h_{kh}=Y_{\tau^h_k}$, $k\in\bbN_0$,
and Proposition~\ref{main thm sec 2}
easily extends to
$$
\Law_{P_y}
\left(Y^h_t;t\in[0,\infty)\right)
=\Law_P
\left(X^{h,y}_t;t\in[0,\infty)\right).
$$
Therefore, in order to prove Theorem~\ref{main thm sec 1},
it is sufficient to show that the processes
$Y^h=(Y^h_t)_{t\in[0,\infty)}$
converge to the process
$Y=(Y_t)_{t\in[0,\infty)}$
in probability $P_y$ uniformly on compact intervals, i.e.,
that, for all $T\in(0,\infty)$, it holds
$$
\|Y^h-Y\|_{C[0,T]}\xrightarrow[]{P_y}0,\quad h\to0,
$$
where $\|\cdot\|_{C[0,T]}$ denotes the sup norm in $C([0,T],\bbR)$.
In what follows, we use the notation
\begin{equation}\label{eq:27112018a1}
Y^h\xrightarrow[]{\ucp(P_y)}Y,\quad h\to0,
\end{equation}
for this mode of convergence.

\smallskip
\textbf{1.}
In the first step, we prove~\eqref{eq:27112018a1}
under assumption~\eqref{eq:26112018a2},
which is stronger than Condition~(A).
To this end, fix $T\in(0,\infty)$ and $\eps>0$.
Take an arbitrary $T'\in(0,\infty)$, $T'>T$,
and choose $\delta\in\left(0,\frac{T'-T}2\right)$
such that $P_y(A(\delta))>1-\frac\eps2$, where
$$
A(\delta) = \{ |Y_t-Y_s| < \frac{\varepsilon}{2} \text{ for all } s,t \in [0,T'] \text{ such that } |t-s| < 3 \delta \}. 
$$
Corollary~\ref{cor:26112018a1} implies
$$
\sup_{k\in \{1,\ldots, \lfloor T'/h \rfloor \}}\left|\tau^h_k-kh\right|
\xrightarrow[]{P_y}0,
\quad h\to 0,
$$
hence, there exists $\gamma\in(0,\ol h)$ such that
$P_y(C(h,\delta))>1-\frac\eps2$
whenever $h\in(0,\gamma)$, where
$$
C(h, \delta) = \left\{ \sup_{k\in \{1,\ldots, \lfloor T'/h \rfloor \}}\left|\tau^h_k-kh\right| < \delta \right\}.
$$
A somewhat tedious check shows that, if $h\in(0,\delta)$,
$$
\|Y^h-Y\|_{C[0,T]}<\eps\quad\text{on }A(\delta)\cap C(h,\delta).
$$
Thus, we get
$P_y(\|Y^h-Y\|_{C[0,T]}>\eps)<\eps$
whenever $h\in(0,\gamma\wedge\delta)$.
This completes the proof of the first step.

\smallskip
\textbf{2.}
We now prove~\eqref{eq:27112018a1} under Condition~(A).
Consider strictly monotone sequences $\{l_n\}_{n\in\bbN}$
and $\{r_n\}_{n\in\bbN}$ with $l_n\searrow l$ and $r_n\nearrow r$.
We define compact subintervals $K_n$ of $I^\circ$
by setting $K_n=[l_n,r_n]$, $n\in\bbN$,
and modified scale factors $\wt a^n_h\colon\ol I\to[0,\infty)$
by setting
$$
\wt a^n_h(y)=\begin{cases}
a_h(y),&y\in K_n,\\
\wh a_h(y),&y\in\ol I\setminus K_n,
\end{cases}
$$
$n\in\bbN$, $h\in(0,\ol h)$,
where the scale factors $\wh a_h$, $h\in(0,\ol h)$,
are the ones from the EMCEL algorithm
(recall~\eqref{sf absorbing case}).
Let $(\wt\tau^{n,h}_k)_{k\in\bbN_0}$
be the associated sequences of the embedding stopping times
and $\wt Y^{n,h}=(\wt Y^{n,h}_t)_{t\in[0,\infty)}$
the analogues of the process
$Y^h=(Y^h_t)_{t\in[0,\infty)}$
for the modified scale factors $\wt a^n_h$,
$n\in\bbN$, $h\in(0,\ol h)$.
Since the scale factors $(a_h)_{h\in(0,\ol h)}$
satisfy Condition~(A), the modified scale factors
$(\wt a^n_h)_{h\in(0,\ol h)}$
satisfy~\eqref{eq:26112018a2}
for each $n\in\bbN$.
By the first step of the proof,
\begin{equation}\label{eq:28112018a1}
\wt Y^{n,h}\xrightarrow[]{\ucp(P_y)}Y,\quad h\to0,
\end{equation}
for any fixed $n\in\bbN$.

Fix $T\in(0,\infty)$ and $\eps>0$.
For any $n\in\bbN$, we define the events
\begin{align*}
A_n&=\{H_{l,r}(Y)\le T+2\text{ and }\exists\,t\in[H_{l_n,r_n}(Y),H_{l,r}(Y)]
\text{ such that }|Y_t-Y_{H_{l,r}(Y)}|>\eps\},\\
B_n&=\{H_{l,r}(Y)>T+2\text{ and }H_{l_n,r_n}(Y)\le T+1\}.
\end{align*}
Notice that the expression $Y_{H_{l,r}(Y)}$
in the above formula for $A_n$
is well-defined and finite.
Indeed, this is the position of $Y$
at an accessible boundary
(because $H_{l,r}(Y)\le T+2<\infty$),
while an infinite boundary cannot be accessible
(because $Y$ is in natural scale).
As $H_{l_n,r_n}(Y)\nearrow H_{l,r}(Y)$ $P_y$-a.s.,
as $n\to\infty$, and $Y$ is continuous, we can choose
a sufficiently big $n_0\in\bbN$ such that
$P_y(A_{n_0})<\frac\eps3$ and $P_y(B_{n_0})<\frac\eps3$.
We also take an arbitrary $T'\in(T,T+1)$.
Corollary~\ref{cor:26112018a1}
applied to the modified scale factors
$(\wt a^{n_0}_h)_{h\in(0,\ol h)}$,
which satisfy~\eqref{eq:26112018a2},
yields that there exists $\gamma>0$ such that,
for any $h\in(0,\gamma)$, we have
$$
P_y\left(\wt\tau^{n_0,h}_{\lfloor T'/h\rfloor}\le T+1\right)>1-\frac\eps3.
$$
For $h\in(0,\ol h)$, we define the event
$$
C_h=\left\{\wt\tau^{n_0,h}_{\lfloor T'/h\rfloor}\le T+1\right\}
\cap\left(A_{n_0}\cup B_{n_0}\right)^c
$$
(the notation $D^c$ means the complement of an event~$D$).
Notice that $P_y(C_h)>1-\eps$ whenever $h\in(0,\gamma)$.
Furthermore, on $C_h$ we have either
$$
H_{l,r}(Y)>T+2,\;
H_{l_{n_0},r_{n_0}}(Y)>T+1,\;
\text{hence }
Y^h_t=\wt Y^{n_0,h}_t,\;
t\in[0,T],
$$
or
\begin{align*}
&H_{l,r}(Y)\le T+2,\;
|Y_t-Y_{H_{l,r}(Y)}|\le\eps\;
\text{for }t\in[H_{l_{n_0},r_{n_0}}(Y),H_{l,r}(Y)],\\
&\text{hence }
\left|Y^h_t-Y_t\right|\le\eps
\text{ whenever }
Y^h_t\ne\wt Y^{n_0,h}_t,\;
t\in[0,T].
\end{align*}
Together with~\eqref{eq:28112018a1}, this proves
$\|Y^h-Y\|_{C[0,T]}\xrightarrow[]{P_y}0$ as $h\to0$.
As $T\in(0,\infty)$ is arbitrary, we obtain~\eqref{eq:27112018a1}.
This concludes the proof.
\end{proof}

\section{Reflecting boundaries}
\label{sec:reflecting}
Throughout the preceding sections we assume that if a boundary point is accessible, then it is absorbing. In this section we explain how one can drop this assumption, i.e.\ how one can extend our functional limit theorem, Theorem~\ref{main thm sec 1}, to Markov processes with reflecting boundaries. 

The idea is to reduce the reflecting case to the inaccessible or absorbing case. Indeed, for every Markov process $Z$ with reflecting boundaries one can find a Markov process $Y$ on an extended state space and a Lipschitz function $f$ such that 
$Y$ has inaccessible or absorbing boundaries and $Z \stackrel{d}{=} f(Y)$. 

We illustrate the reduction for a Markov process $Z$ in natural scale with state space $I_Z = [l, \infty)$, where $l > -\infty$ is a reflecting boundary. We denote by $m_Z$ the speed measure of $Z$. Since $l$ is non-absorbing, it must hold that $m_Z(\{l\}) < \infty$.
Notice that $m_Z(\{l\})=0$ corresponds to instantaneous reflection,
while $m_Z(\{l\})\in(0,\infty)$
to slow reflection.

To proceed with the construction, we first remark that it holds
\begin{equation}\label{eq:06072018a2}
m_Z((l,l+1))<\infty.
\end{equation}
Indeed, in terms of the Feller boundary classification
(see Table~15.6.2 in \cite{KarlinTaylor:81}),
as the accessible boundary point $l$ is reflecting,
it can only be \emph{regular},
which implies~\eqref{eq:06072018a2}.

Now let $Y$ be a Markov process in natural scale with state space $I_Y = \R$ and speed measure $m_Y$ satisfying
\begin{align*}
m_Y(A) & = m_Z(A),
\quad\text{for all }A\in\cB(\bbR),\;A\subseteq(l,\infty), \\
m_Y(A) & = m_Z(2l-A),
\quad\text{for all }A\in\cB(\bbR),\;A\subseteq(-\infty,l), \\
m_Y(\{l\}) & = 2 m_Z(\{l\}),
\end{align*}
which is a valid speed measure on $I_Y=\bbR$
(i.e., \eqref{eq:06072018a1} holds)
due to~\eqref{eq:06072018a2}.
Then $l+|Y-l|$ has the same distribution as~$Z$
(see Proposition~VII.3.10 in \cite{RY}).

Let $(a_h)_{h \in (0,\ol h)}$ satisfy the assumptions of Theorem \ref{main thm sec 1}. Then $(X^h)_{h\in (0,\ol h)}$ converges in distribution to $Y$ as $h\to 0$. This implies 
that the processes $(l+|X^h-l|)_{h\in (0,\ol h)}$ converge in distribution to~$Z$ as $h\to 0$.

\bigskip
In a similar way, a Markov process $Z$ on a bounded interval $I_Z$
with endpoints $l$ and $r$ ($l<r$), where $l\in I_Z$ is reflecting
and $r$ is inaccessible (resp., absorbing),
can be reduced to a Markov process $Y$ with state space
$I_Y$, which is the interval with endpoints $2l-r$ and $r$,
where both these endpoints are inaccessible (resp., absorbing).

\bigskip
A Markov process $Z$ with two reflecting boundaries can be reduced to a Markov process with state space~$\R$. To explain this, suppose for simplicity that the state space of $Z$ is $[0,1]$. Define $Y$ as the Markov process on $\R$ with speed measure $m_Y$ satisfying
\begin{align*}
m_Y(A) = m_Y(-A) = m_Z(A), &\text{ for all } A\in \cB\big( (0,1) \big), \\
m_Y(A + 2k) = m_Z(A), &\text{ for all } A\in \cB\big((-1,0)\cup(0,1)\big) \text{ and } k \in \IZ, \\
m_Y(\{2k\}) = 2 m_Z(\{0\}), &\text{ for all } k \in \IZ, \\
m_Y(\{2k+1\}) = 2 m_Z(\{1\}), &\text{ for all } k \in \IZ.
\end{align*}   
Let $f\colon\R\to[0,1]$ be the periodic function
with period $2$ satisfying $f(x)=|x|$, $x\in[-1,1]$.
Then the process $f(Y)$ has the same distribution as~$Z$
(cf.\ Proposition~VII.3.10 in \cite{RY}).

\section{Examples with sticky points}\label{sec:examples}
In this section we apply Theorem~\ref{main thm sec 1}
to sticky Brownian motions on $\bbR$
and on $[0,\infty)$, where the sticky point is zero.
In the latter case one also speaks about slow (or sticky) reflection at $0$.
Recent years have witnessed an increased interest in the sticky Brownian motion and related processes, see
\cite{KSS2011},
\cite{Bass2014},
\cite{ep2014},
\cite{HajriCaglarArnaudon:17},
\cite{CanCaglar2019}
and references therein.
Newly, diffusions with slow reflection
were applied in \cite{EberleZimmer2019}
to provide bounds (via sticky couplings)
for the distance between two
multidimensional diffusions with different drifts.
Stickiness is a convenient concept for modeling repulsive interactions between particles, and, motivated by natural questions from physics, it is discussed in multi- and infinitedimensional situations in
\cite{FGV2016},
\cite{GV2017},
\cite{GV2018},
\cite{Konarovskyi2017},
\cite{KvR2017}.
Diffusions with slow reflection also attracted interest in
economic theory, where such processes characterize
optimal continuation values
in dynamic principal-agent problems (see, e.g., \cite{zhu2012optimal} and \cite{piskorski2016optimal}).

On the contrary,
the literature on \emph{approximations} of diffusions with atoms in the speed measure is scarce. We remark that \cite{Amir1991} provides
a sequence of random walks that converges in distribution to the Brownian motion on $\bbR$ sticky at zero. The random walks considered there are forced to stay in zero for some time whenever they visit zero. In contrast to our approach, the approximating processes are not Markov chains.
\cite{fushiya2010weak} constructs
Markov chains that converge in distribution to 
the Brownian motion on $[0,\infty)$ with slow reflection 
at~$0$. The approximating Markov chains considered there exhibit sticky behavior in 
zero in the sense that once the Markov chains reach zero they stay there with positive probability 
also in the next time period. The recent work \cite{bou2019sticky} proposes an approximation of the Brownian motion on $[0,\infty)$ with slow reflection 
at~$0$ by continuous-time pure jump Markov processes $Y^\delta$, $\delta\in (0,\infty)$, with uniform grids $\{0,\delta,2\delta,\ldots\}$ as state spaces. The jump times are exponentially distributed and the mean waiting time at the interior points $\{\delta,2\delta,\ldots\}$ is of order $\delta^2$ whereas it is of order $\delta$ at the origin~$0$.

\subsection{Brownian motion on $\R$ with sticky point~$0$}
\label{subsec:sticky_at_0}

Brownian motion on $\bbR$ sticky at $0$ is a Markov process $Y$ in natural scale with state space $I=\bbR$ and speed measure
\begin{equation}\label{eq:sm_sticky_bm}
m(dx)=\frac{2}{\sigma^2}\,\lambda(dx)+ \frac{2}{\theta}\,\delta_0(dx),
\end{equation}
where $\sigma, \theta \in (0,\infty)$ and $\lambda(dx)$ denotes the Lebesgue measure.\footnote{Since there exist different conventions concerning the normalization of the speed measure (cf.\ Footnote~\ref{fn:conv_sm}), our representation of $m$ in \eqref{eq:sm_sticky_bm} may differ by a factor of $2$ from related representations found in the literature (cf., e.g., \cite{Bass2014}).}
Such a process $Y$ behaves like $\sigma$ times a Brownian motion outside zero, but spends a positive amount of time at zero having no intervals of zeros.
Notice that the bigger $\theta$ is, the less time $Y$ spends at zero;
$\theta=\infty$ corresponds to a standard Brownian motion (times~$\sigma$).

It is instructive to compute the function $q(y,x)$, $y,x\in\bbR$,
of~\eqref{eq:def_q}
$$
q(y,x)=\begin{cases}
\frac{(x-y)^2}{\sigma^2}+2\frac{x^-}{\theta}&\text{if }y>0,\\[1mm]
\frac{x^2}{\sigma^2}+\frac{|x|}{\theta}&\text{if }y=0,\\[1mm]
\frac{(x-y)^2}{\sigma^2}+2\frac{x^+}{\theta}&\text{if }y<0
\end{cases}
$$
($x^+=\max\{x,0\}$, $x^-=-\min\{x,0\}$)
and to observe that, for any $y\in\bbR$,
the function $q(y,\cdot)$ has a kink at zero.

We now determine, for every $h \in (0,\infty)$, a function $\wh a_h\colon \R \to (0,\infty)$ such that the associated Markov chain $(\wh X^h_{hk})_{k \in \mathbb{N}_0}$, defined in \eqref{eq:def_X}, belongs to EMCEL$(h)$. Indeed, one can explicitly
determine for all $y\in \R$ the real number $\wh a_h(y)$ satisfying
\begin{equation}\label{eq:equality_for_sf}
\frac{1}{2}\int_{(y-\wh a_h(y),y+\wh a_h(y))}(\wh a_h(y)-|u-y|)\, m(du)=h.
\end{equation}
We state the closed-form representations of $\wh a_h$ in the next Lemma.

\begin{lemma}
For all $h\in (0,\infty)$ and $y\in \R$ let
$$
\wh a_h(y)=\begin{cases}
\sigma \sqrt{h}
&\text{if } |y|\ge \sigma \sqrt{h},\\[1mm]
\sigma\left(\sqrt{h+\frac{|y|}{\theta}+\left(\frac{\sigma}{2\theta}\right)^2}-\frac{\sigma}{2\theta}\right)&\text{if } |y|< \sigma \sqrt{h}.
\end{cases}
$$
Then Equation \eqref{eq:equality_for_sf} is satisfied for all $h\in (0,\infty)$ and $y\in \R$.
\end{lemma}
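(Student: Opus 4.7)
The plan is a direct calculation: plug the proposed $\wh a_h(y)$ into the left-hand side of \eqref{eq:equality_for_sf} and verify the equality, splitting cases according to whether the atom of $m$ at $0$ lies inside the integration interval.

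First I would fix $h>0$ and $y\in\bbR$, set $a:=\wh a_h(y)$, and decompose
$$
\frac12\int_{(y-a,y+a)}(a-|u-y|)\,m(du)=\frac1{\sigma^2}\int_{y-a}^{y+a}(a-|u-y|)\,du+\frac1\theta(a-|y|)\,\mathbf 1_{\{|y|<a\}}.
$$
The Lebesgue integral is the area of an isoceles triangle of base $2a$ and height $a$, which equals $a^2$, so the Lebesgue contribution is $a^2/\sigma^2$. Thus \eqref{eq:equality_for_sf} reduces to
\begin{equation}\label{eq:plan_reduced}
\frac{a^2}{\sigma^2}+\frac{(a-|y|)^+}{\theta}\,\mathbf 1_{\{|y|<a\}}=h.
\end{equation}

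Next I would treat the two regimes separately. In the regime $|y|\ge\sigma\sqrt h$, I would guess $a=\sigma\sqrt h$: then $a\le|y|$, so the atom term vanishes and \eqref{eq:plan_reduced} reduces to $a^2/\sigma^2=h$, which holds. In the regime $|y|<\sigma\sqrt h$, the atom term should be active, so I would solve the quadratic
$$
\frac{a^2}{\sigma^2}+\frac{a-|y|}{\theta}=h,\qquad\text{i.e.,}\qquad a^2+\frac{\sigma^2}{\theta}a-\sigma^2\Bigl(h+\frac{|y|}{\theta}\Bigr)=0,
$$
whose positive root is $a=\sigma\bigl(\sqrt{h+|y|/\theta+(\sigma/(2\theta))^2}-\sigma/(2\theta)\bigr)$, matching the claimed formula.

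The only thing that needs checking is internal consistency, namely that the case hypothesis $|y|<a$ (so that the atom is indeed inside $(y-a,y+a)$) is equivalent to the case hypothesis $|y|<\sigma\sqrt h$ used in the statement. I would verify this by squaring $|y|+\sigma/(2\theta)<\sigma\sqrt{h+|y|/\theta+(\sigma/(2\theta))^2}$, which after cancellation of the $\sigma|y|/\theta$ and $\sigma^2/(4\theta^2)$ terms yields exactly $|y|^2<\sigma^2 h$. No step is a genuine obstacle; the only mild point to be careful about is tracking the square root and the case boundary, so that the two formulas match continuously at $|y|=\sigma\sqrt h$ (they do: both give $a=\sigma\sqrt h$).
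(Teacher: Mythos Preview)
Your proof is correct and follows essentially the same route as the paper: compute the integral against $m$ as $a^2/\sigma^2$ plus the atom contribution $(a-|y|)/\theta$ when $|y|<a$, then handle the two cases. You actually go slightly further than the paper by explicitly verifying the consistency condition $|y|<a\iff|y|<\sigma\sqrt h$ in the second case (the paper simply asserts $\wh a_h(y)>|y|$). One cosmetic slip: in that verification the constant being moved across should be $\sigma^2/(2\theta)$, not $\sigma/(2\theta)$, and correspondingly the cancelled terms are $\sigma^2|y|/\theta$ and $\sigma^4/(4\theta^2)$; the final inequality $|y|^2<\sigma^2 h$ is unaffected.
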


\begin{proof}
Throughout the proof fix $h\in (0,\infty)$ and $y\in \R$. For every $a\in [0,\infty)$ it holds that
$$
\frac{1}{2}\int_{(y-a,y+a)}(a-|u-y|)\, m(du)
=\frac{a^2}{\sigma^2}+\frac{a-|y|}{\theta}1_{(y-a,y+a)}(0)
=\frac{a^2}{\sigma^2}+\frac{a-|y|}{\theta}1_{[0,a)}(|y|).
$$
Assume first that $|y|\ge \sigma\sqrt{h}$. Then it holds that $|y|\ge \wh a_h(y)$
and hence \eqref{eq:equality_for_sf} is satisfied. Next assume that $|y| < \sigma \sqrt{h}$. In this case it holds that $\wh a_h(y)> |y|$. Moreover it holds that
\begin{equation}
\frac{\wh a_h(y)^2}{\sigma^2} +\frac{\wh a_h(y)-|y|}{\theta} =h.
\end{equation}
This proves \eqref{eq:equality_for_sf} in the case $|y| < \sigma\sqrt{h}$. The proof is thus completed.
\end{proof}
Since $\wh a_h$ satisfies Equation \eqref{eq:equality_for_sf} exactly, it follows that Condition~(A) is satisfied. 
Theorem~\ref{main thm sec 1} implies that
the processes $(X^h)$ 
converge in distribution to $Y$ as $h\to 0$.
Figure~\ref{fig:sbm1} depicts two realizations
of a Brownian motion on $\bbR$ sticky at $0$
with $\sigma=1$ and different values for $\theta$
as well as the empirical distribution function of~$\wh X^h_1$ with $h=10^{-3}$.

\begin{figure}[htb!]
\centering
\includegraphics[width=0.49\textwidth]{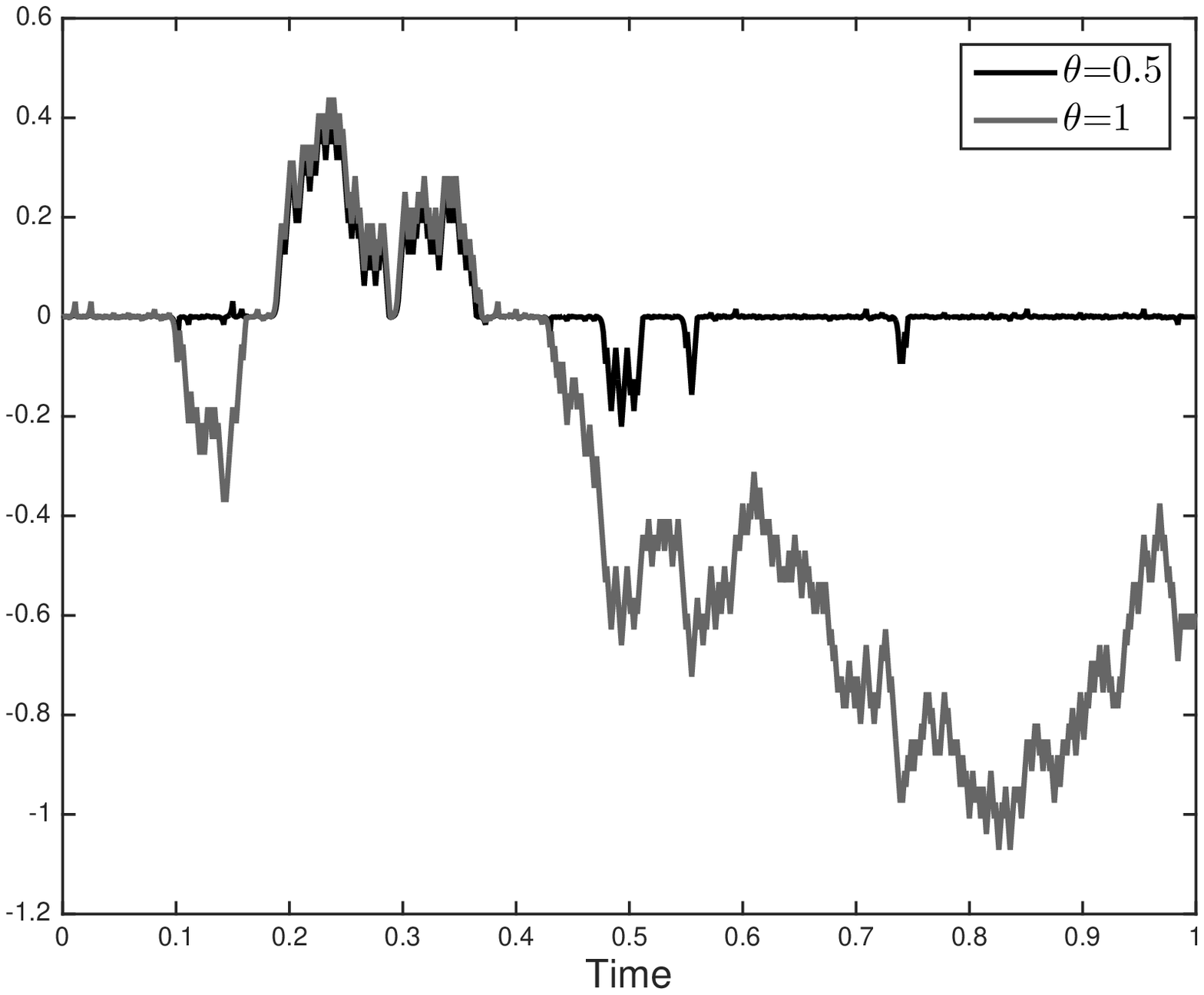}
\includegraphics[width=0.49\textwidth]{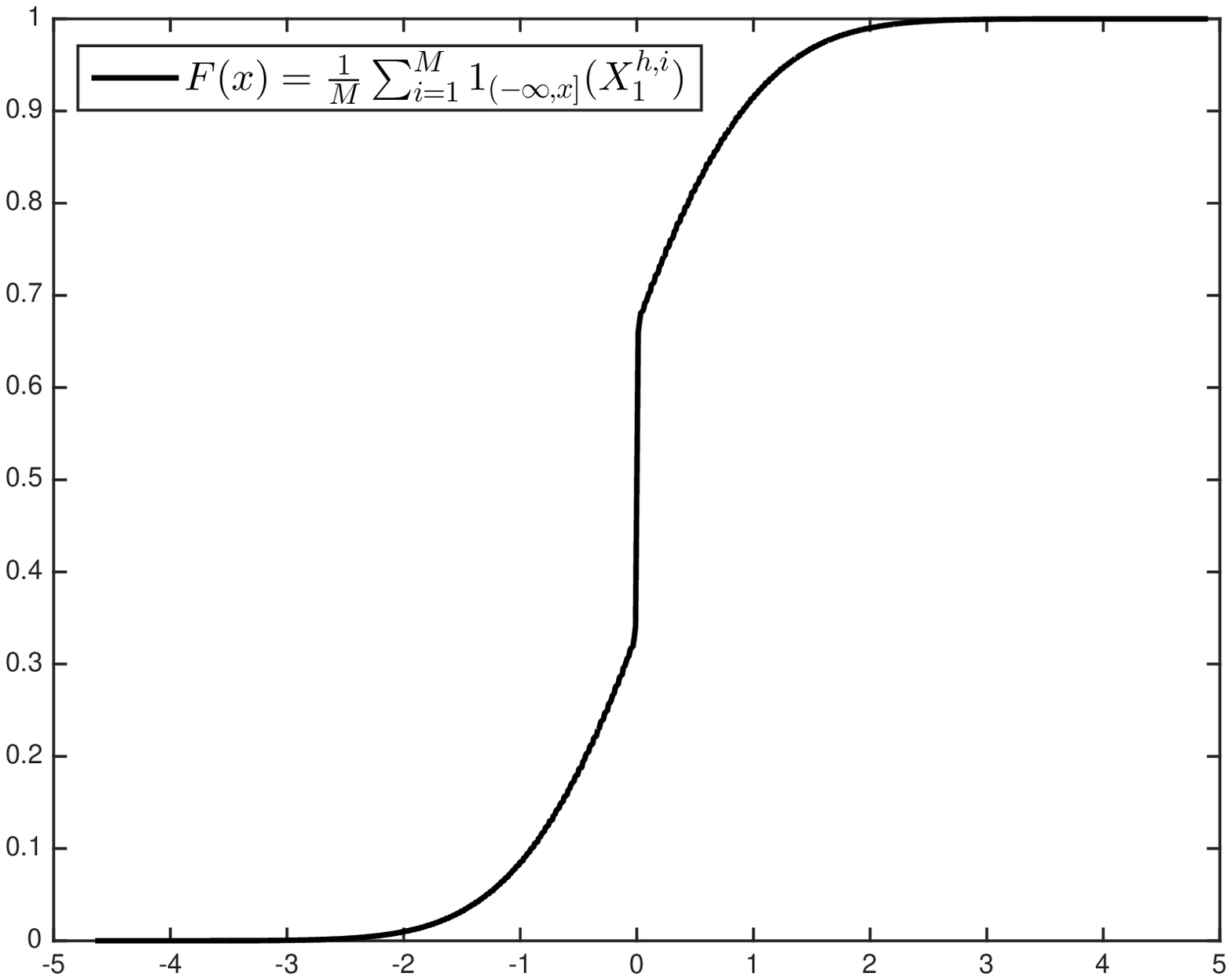}
{\caption{\footnotesize Left: Two trajectories of the approximation of the sticky Brownian motion. The black line depicts one realization of $(\wh X^h_{t})_{t\in [0,1]}$ with $h=10^{-3}$, $\sigma=1$ and $\theta=0.5$. The gray line shows one realization of $(\wh X^h_{t})_{t\in[0,1]}$ with $h=10^{-3}$, $\sigma=1$ and $\theta=1$. Both trajectories are generated by the same sample of random increments $(\xi_k)_{k\in \N}$. Observe that the smaller the value of $\theta$ is, the more the process sticks to~$0$.
Right: Empirical distribution function of the approximation of the sticky Brownian motion. The figure depicts the function $F\colon \R \to [0,1]$, $F(x)=\frac{1}{M}\sum_{i=1}^M 1_{(-\infty,x]}(\wh X^{h,i}_1)$, where $(\wh X^{h,i}_1)_{i\in \{1,\ldots,M\}}$ are $M=10^6$ independent realizations of $\wh X^h_1$ with $h=10^{-3}$, $\sigma=1$ and $\theta=1$. Observe that a jump at $0$ becomes apparent. This reflects the fact that the (weak) limit $Y_1$ of $\wh X^h_1$ is with positive probability equal to~$0$. Notice that the distribution function of $Y_1$ is known in closed form (cf.\ Lemma~\ref{lem:distr_funct_sbm} below). We refrain from plotting it in the figure on the right-hand side because with the current image scaling it is visually nearly indistinguishable from~$F$.}\label{fig:sbm1}}
\end{figure}

\subsection{Brownian motion on $[0,\infty)$ with slow reflection at~$0$}\label{subsec:sticky_reflection}
In this section we consider a Brownian motion on $[0,\infty)$ with slow reflection at $0$. 
We first define this process, as in Warren \cite{Warren97}, as the solution of SDE~\eqref{eq:sde_sbm} below.
We subsequently show that its distribution is identical to the distribution of $|Y|$, where $Y$ is the general diffusion analyzed in Section~\ref{subsec:sticky_at_0}. From this perspective, the main difference between the processes studied in this section and those in Section~\ref{subsec:sticky_at_0} is the state space.

Let $\sigma,\theta\in (0,\infty)$.
According to Theorem~IV.7.2 in \cite{ikeda2014stochastic}
the stochastic differential equation
\begin{equation}\label{eq:sde_sbm}
dZ_t=\theta 1_{\{Z_t=0\}}\,dt +\sigma 1_{\{Z_t>0\}}\,dW_t, \quad Z_0=0,
\end{equation}
possesses a weak solution that is unique in law.
However, it is worth noting that neither existence of a strong
solution nor pathwise uniqueness hold for~\eqref{eq:sde_sbm}
(see \cite{ep2014} and references therein).
The next result shows that $Z$ is a regular diffusion on $[0,\infty)$ and identifies the associated speed measure.

\begin{lemma}\label{lem:sbm_speed_measure}
The solution $Z$ of~\eqref{eq:sde_sbm} is a regular continuous strong Markov process in natural scale with state space $I_Z=[0,\infty)$  and with speed measure
\begin{equation}\label{eq:12072018a2}
m_Z(dz)
=\frac2{\sigma^2}\,\lambda(dz)
+\frac{1}{\theta}\,\delta_0(dz).
\end{equation}
\end{lemma}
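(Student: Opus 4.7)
The plan is to identify $Z$ in law with $|Y|$, where $Y$ is the Brownian motion on $\bbR$ sticky at $0$ from Section~\ref{subsec:sticky_at_0} with the same parameters $\sigma,\theta$, and then to read off the claimed properties from Section~\ref{sec:reflecting}. Observe first that the speed measure $m_Y=\frac{2}{\sigma^2}\lambda+\frac{2}{\theta}\delta_0$ coincides exactly with the measure produced from $m_Z=\frac{2}{\sigma^2}\lambda+\frac{1}{\theta}\delta_0$ by the reflecting-boundary construction of Section~\ref{sec:reflecting} applied with $l=0$: the absolutely continuous part is symmetric under reflection and the atom mass at $0$ doubles. Section~\ref{sec:reflecting} therefore asserts that $|Y|$ has the law of a regular continuous strong Markov process in natural scale on $[0,\infty)$ with speed measure~$m_Z$. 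Consequently, once we know $Z\stackrel{d}{=}|Y|$, the lemma follows.

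To obtain $Z\stackrel{d}{=}|Y|$, I would show that $|Y|$ is itself a weak solution of~\eqref{eq:sde_sbm} and then invoke the uniqueness in law of that SDE quoted from~\cite{ikeda2014stochastic}. Because $Y$ is in natural scale on $\bbR$, it is a continuous local martingale, and Itô-Tanaka gives
\begin{equation*}
|Y_t|=\int_0^t\sgn(Y_s)\,dY_s+L^0_t(Y).
\end{equation*}
The generator of $Y$ is $\cL=\frac{d}{dm_Y}\frac{d}{dx}$, which equals $\frac{\sigma^2}{2}\partial_x^2$ on $\bbR\setminus\{0\}$ and $\cL f(0)=\frac{\theta}{2}\bigl(f'(0^+)-f'(0^-)\bigr)$ at the atom (recall $m_Y(\{0\})=2/\theta$). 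Applying the associated martingale problem to $f(x)=x^2$ shows that $Y_t^2-\sigma^2\int_0^t 1_{\{Y_s\ne 0\}}\,ds$ is a local martingale; comparing with $Y^2-\langle Y\rangle$ and using the uniqueness of the continuous finite-variation part of a semimartingale gives $\langle Y\rangle_t=\sigma^2\int_0^t 1_{\{Y_s\ne 0\}}\,ds$. Applying the same martingale problem to $f(x)=|x|$ (for which $\cL f(0)=\theta$) shows that $|Y_t|-\theta\int_0^t 1_{\{Y_s=0\}}\,ds$ is a local martingale, and comparing with Itô-Tanaka yields $L^0_t(Y)=\theta\int_0^t 1_{\{Y_s=0\}}\,ds$. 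Setting $\widetilde Z_t:=|Y_t|$ and $M_t:=\int_0^t\sgn(Y_s)\,dY_s$, one obtains
\begin{equation*}
\widetilde Z_t=M_t+\theta\int_0^t 1_{\{\widetilde Z_s=0\}}\,ds,\qquad \langle M\rangle_t=\sigma^2\int_0^t 1_{\{\widetilde Z_s>0\}}\,ds.
\end{equation*}
A standard martingale-representation argument on an enlarged probability space carrying an independent Brownian motion produces a Brownian motion $\tilde W$ such that $M_t=\sigma\int_0^t 1_{\{\widetilde Z_s>0\}}\,d\tilde W_s$, exhibiting $|Y|$ as a weak solution of~\eqref{eq:sde_sbm}.

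The main technical obstacle is the rigorous passage from the abstract speed-measure characterization of $Y$ (as in \cite{RY} or \cite{RogersWilliams}) to the two identifications of $\langle Y\rangle$ and of $L^0(Y)$ used above. A clean way to do this is to first verify, for every $f\in C^2(\bbR)$ with bounded derivatives, that $f(Y_\cdot)-\int_0^\cdot\cL f(Y_s)\,ds$ is a local martingale (the martingale problem associated with $\cL$), then specialize to $f(x)=x^2$ and to smooth approximations of $|x|$, and finally pass to the limit via the occupation-time formula. Once these two identifications are in place, the rest of the argument reduces to the routine martingale-representation construction of $\tilde W$, the invocation of uniqueness in law for~\eqref{eq:sde_sbm}, and transferring the properties of $|Y|$ (given by Section~\ref{sec:reflecting}) to~$Z$.
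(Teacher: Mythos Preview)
Your argument is correct but takes a genuinely different route from the paper's. The paper works directly with $Z$: it applies It\^o's formula to $f(Z)$ for $f\in C^2_0([0,\infty))$, reads off the generator $\cA f(0)=\theta f'(0)$ and $\cA f(z)=\frac{\sigma^2}{2}f''(z)$ for $z>0$ (with boundary condition $\theta f'(0)=\frac{\sigma^2}{2}f''(0)$), and then identifies the speed measure via Theorem~VII.3.12 and Proposition~VII.3.13 in~\cite{RY}. That is short and self-contained. You instead first exhibit $|Y|$ as a weak solution of~\eqref{eq:sde_sbm}, invoke uniqueness in law to get $Z\stackrel{d}{=}|Y|$, and then read off the speed measure of $|Y|$ from the folding construction of Section~\ref{sec:reflecting}. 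This is more involved---you need the martingale-problem identifications of $\langle Y\rangle$ and $L^0(Y)$ and a martingale-representation step---but it has the pleasant feature that the identity $Z\stackrel{d}{=}|Y|$, which the paper deduces only \emph{after} the lemma (using Section~\ref{sec:reflecting} once $m_Z$ is known), falls out directly from your proof. Both approaches ultimately rest on the same ingredients from~\cite{RY}; the paper uses them to go from the SDE to the speed measure, while you use them to go from the speed measure of $Y$ to that of $|Y|$ and separately match $|Y|$ with the SDE.
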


\begin{proof}
Strong Markov property of $Z$ is implied by the uniqueness in law for~\eqref{eq:sde_sbm}.
Clearly, $Z$ is regular with state space $I_Z=[0,\infty)$
and in natural scale.
By It\^o's formula, we have
$$
f(Z_t)=f(Z_0)+\int_0^t
\left(\theta f'(0) 1_{\{Z_s=0\}}
+\frac{\sigma^2}2 f''(Z_s) 1_{\{Z_s>0\}}\right)\,ds
+\int_0^t \sigma f'(Z_s) 1_{\{Z_s>0\}}\,dW_s
$$
for $C^2$ functions $f\colon[0,\infty)\to\bbR$.
Therefore, the generator $\cA$ of $Z$
takes the form
\begin{equation}\label{eq:12072018a1}
\cA f(z)=\begin{cases}
\theta f'(0)&\text{if }z=0,\\
\frac{\sigma^2}2 f''(z)&\text{if }z>0
\end{cases}
\end{equation}
for $f\in C^2_0([0,\infty))$
(this means that the function itself and
its first and second derivative
vanish at infinity)
satisfying the boundary condition
$$
\theta f'(0)=\frac{\sigma^2}2 f''(0).
$$
By Theorem~VII.3.12 in \cite{RY},
we have $\cA f(z)=\frac{d}{dm_Z} f'(z)$
in the interior of the state space, i.e., for $z>0$,
while, by Proposition~VII.3.13 in \cite{RY}, it holds
$f'(0)=m(\{0\})\cA f(0)$
on the boundary.
Together with~\eqref{eq:12072018a1},
this implies~\eqref{eq:12072018a2}
and concludes the proof.
\end{proof}

It follows from Section~\ref{sec:reflecting} that $Z \stackrel{d}{=} |Y|$, where $Y$ is a diffusion in natural scale with state space $I_Y=\R$ and speed measure
$m_Y(dz)=\frac2{\sigma^2}\,\lambda(dz)+\frac{2}{\theta}\,\delta_0(dz)$,
i.e., $Y$ is the process studied in
Section~\ref{subsec:sticky_at_0}
(cf.~\eqref{eq:sm_sticky_bm}).
In particular, $Z$ can be approximated by $(|\wh X^h|)_{h\in (0,\infty)}$,
where each $\wh X^h$ is the EMCEL$(h)$
constructed in Section~\ref{subsec:sticky_at_0}.

Warren \cite{Warren97} determines for all $t\in (0,\infty)$ the conditional law of $Z_t$ given the driving Brownian motion $W$. As a consequence, we obtain for all $t\in (0,\infty)$ closed form representations of the cumulative distribution function and the expected value of $Z_t$. The precise formulas are provided in Lemma~\ref{lem:distr_funct_sbm} below,
where we, without loss of generality, consider $\sigma=1$.
The notations $P_0$ for the probability measure
and $E_0$ for the corresponding expectation operator
emphasize that the formulas are given for the case $Z_0=0$.
We use these formulas to analyze the empirical rate of convergence of EMCEL approximations.
The results are presented in Figure~\ref{fig:sbm2}. 

\begin{figure}[htb!]
\centering
\includegraphics[width=0.49\textwidth]{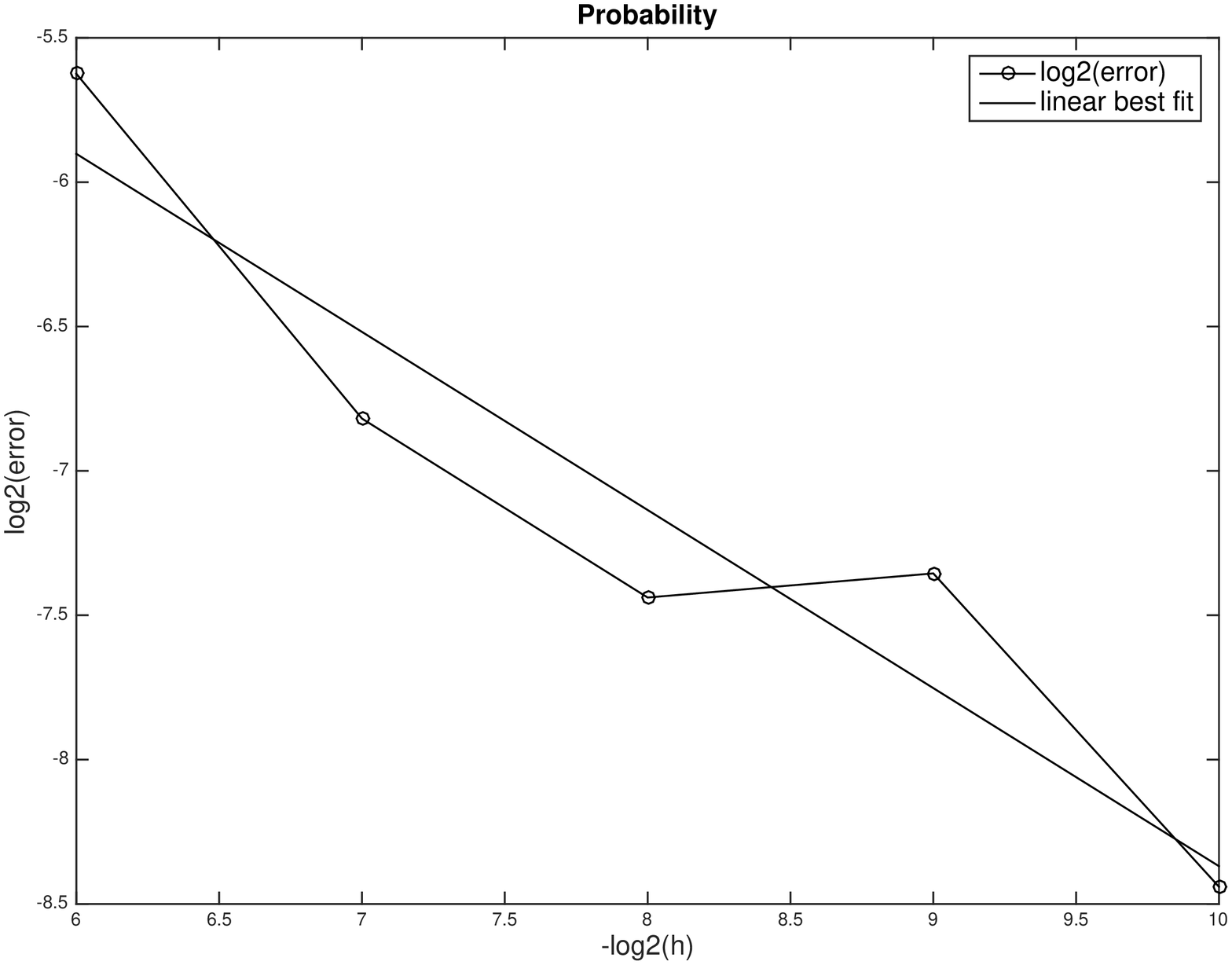}
\includegraphics[width=0.49\textwidth]{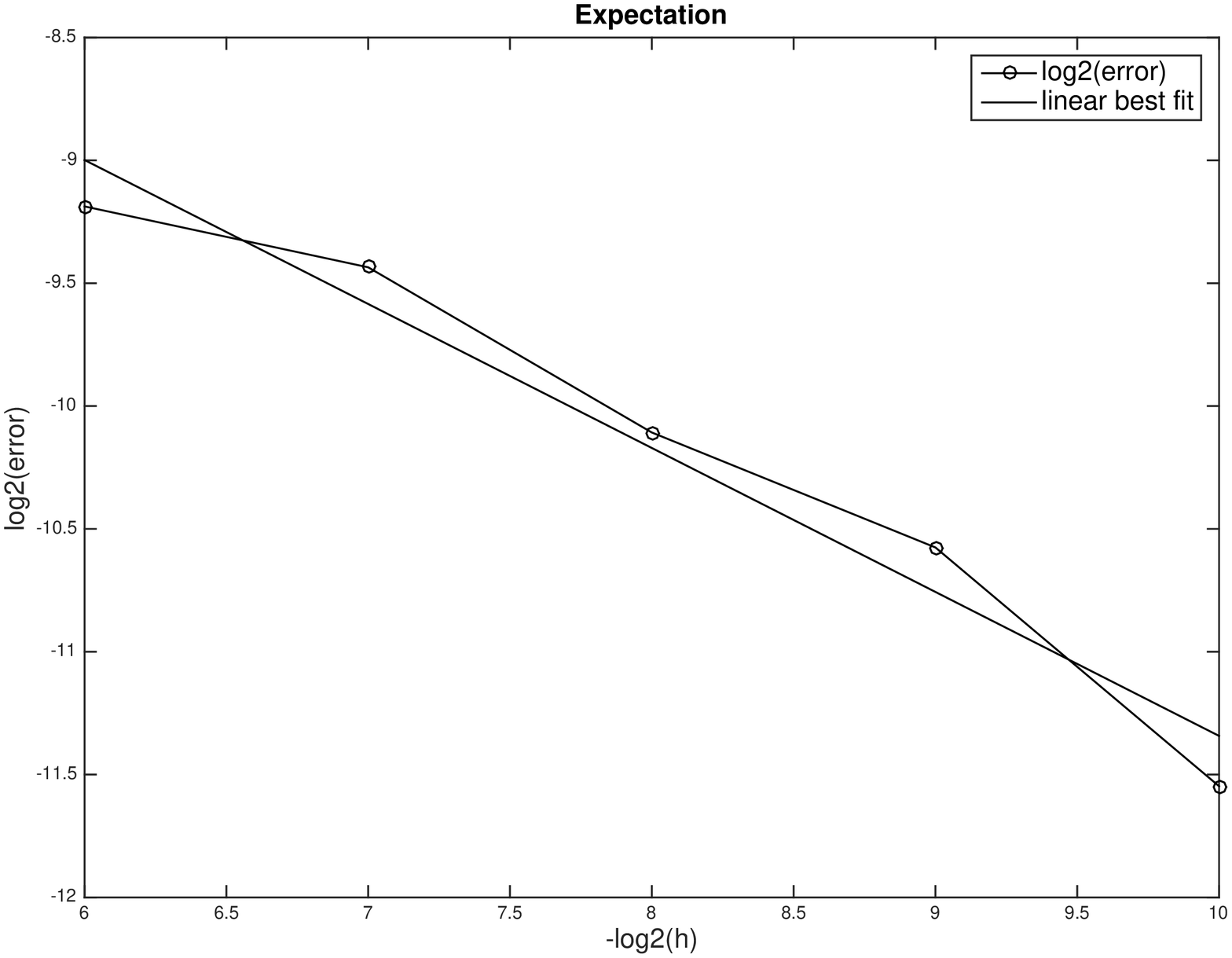}
{\caption{\footnotesize
The parameter values are $\sigma=1$ and $\theta=1/2$ for both plots.
Left: Empirical rate of convergence of the distribution function.
The circles depict the five data points 
$\left\{\left( -\log_2(h),\log_2\left|\frac{1}{M}\sum_{i=1}^M1_{[0,0.1]}(|\wh X^{h,i}_1|)-F_Z(0.1;1)\right|\right), h=2^{-6},\ldots,2^{-10}\right \}$, where $F_Z(0.1;1)=P_0[Z_1\le 0.1]\approx 0.5741$ (see Lemma~\ref{lem:distr_funct_sbm}) and $(\wh X^{h,i}_1)_{i\in \{1,\ldots,M\}}$ are $M=10^8$ independent realizations of $\wh X^h_1$.
The straight line is the linear best fit. Its slope is approximately $-0.62$.
Right: Empirical rate of convergence of the expected value. The circles depict the five data points 
$\left\{\left( -\log_2(h),\log_2\left|\frac{1}{M}\sum_{i=1}^M|\wh X^{h,i}_1|-E_0[Z_1]\right|\right), h=2^{-6},\ldots,2^{-10}\right \}$, where $E_0[Z_1]\approx 0.3210$ (see Lemma~\ref{lem:distr_funct_sbm}) and $(\wh X^{h,i}_1)_{i\in \{1,\ldots,M\}}$ are $M=10^8$ independent realizations of $X^h_1$.
The straight line is the linear best fit. Its slope is approximately $-0.59$.}\label{fig:sbm2}}
\end{figure}

\begin{lemma}\label{lem:distr_funct_sbm}
Let $Z$ be a solution of~\eqref{eq:sde_sbm} with $\sigma=1$.
For every $t\in (0,\infty)$ the cumulative distribution function $F(\cdot;t)\colon [0,\infty)\to [0,1]$ of $Z_t$ satisfies
\begin{equation}\label{eq:distr_funct_sbm}
F_Z(z;t):=P_0[Z_t\le z]=2\Phi\left(\frac{z}{\sqrt{t}} \right)-1+2e^{2\theta(z+\theta t)}\Phi\left(-2\theta \sqrt{t}-\frac{z}{\sqrt{t}}\right), \quad z\in [0,\infty),
\end{equation}
where $\Phi(x)=\int_{-\infty}^x \frac1{\sqrt{2\pi}} e^{-y^2/2}\,dy$
is the cumulative distribution function of the standard normal distribution. Moreover, it holds that
\begin{equation}\label{eq:05012020a1}
E_0[Z_t]=\sqrt{\frac{2t}{\pi}}-\frac{1}{2\theta}+\frac{e^{2\theta^2t}}{\theta}\Phi(-2\theta\sqrt{t})
=E_0[|W_t|]-\frac{1}{2\theta}+\frac{e^{2\theta^2t}}{\theta}\Phi(-2\theta\sqrt{t}).
\end{equation}
\end{lemma}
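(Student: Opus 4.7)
The starting point is Warren's explicit description in \cite{Warren97} of the conditional law of $Z_t$ given the driving Brownian motion~$W$. Warren expresses $P_0(Z_t\le z\mid\sigma(W))$ as an explicit functional of the pair $(W_t, L^0_t(W))$, where $L^0_t(W)$ denotes the local time of $W$ at the origin; the functional involves the stickiness parameter~$\theta$ in a simple way and reduces the law of $Z_t$ under~$P_0$ to a two-dimensional integration.

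To obtain~\eqref{eq:distr_funct_sbm}, I would take the unconditional expectation and integrate Warren's formula against the joint law of $(W_t, L^0_t(W))$, which by L\'evy's identity coincides with that of $(S_t-W_t, S_t)$, where $S_t=\sup_{s\le t}W_s$. Substituting the classical density
$$
\mathbb{P}(W_t\in dx,\, S_t\in dy)=\frac{2(2y-x)}{\sqrt{2\pi t^3}}\exp\!\left(-\frac{(2y-x)^2}{2t}\right)dx\,dy,\quad y\ge 0,\; x\le y,
$$
and completing the square in the exponent, the resulting double integral splits into two Gaussian contributions: the first reproduces the ``reflecting'' term $2\Phi(z/\sqrt t)-1=\mathbb{P}(|W_t|\le z)$, and the second yields the ``sticky correction'' $2e^{2\theta(z+\theta t)}\Phi(-2\theta\sqrt t-z/\sqrt t)$. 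Summing these two pieces gives~\eqref{eq:distr_funct_sbm}.

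For the mean~\eqref{eq:05012020a1}, I would use the layer-cake identity $E_0[Z_t]=\int_0^\infty(1-F_Z(z;t))\,dz$. The contribution of the Gaussian summand integrates to $E_0[|W_t|]=\sqrt{2t/\pi}$ in the standard way. The exponential correction reduces to computing
$$
J:=\int_0^\infty e^{2\theta z}\Phi\!\left(-2\theta\sqrt t-\frac{z}{\sqrt t}\right)dz,
$$
which I would evaluate by integration by parts with $u=\Phi(-2\theta\sqrt t-z/\sqrt t)$ and $dv=e^{2\theta z}dz$. The boundary term at $z=0$ yields $-\frac{1}{2\theta}\Phi(-2\theta\sqrt t)$, while the remaining integral simplifies dramatically: once the exponents combine to $-2\theta^2 t - z^2/(2t)$, the integrand is a pure Gaussian, yielding $\frac{\sqrt t}{2}e^{-2\theta^2 t}$. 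Multiplying by $-2e^{2\theta^2 t}$ and adding $\sqrt{2t/\pi}$ recovers exactly the two claimed summands $-\frac{1}{2\theta}$ and $\frac{e^{2\theta^2 t}}{\theta}\Phi(-2\theta\sqrt t)$.

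The step demanding the most care is the bookkeeping of normalizations: Warren's conventions for the stickiness parameter and the local time must be reconciled with those used here, where the speed measure~\eqref{eq:12072018a2} carries the atom $1/\theta$ at the origin (and not $2/\theta$, as in the two-sided case of Section~\ref{subsec:sticky_at_0}). Aligning these constants so that the exponent in the correction term of~\eqref{eq:distr_funct_sbm} ends up as precisely $2\theta(z+\theta t)$, and producing the correct factor $\sqrt{2t/\pi}$ in the Gaussian part, is the main place where a careful check is needed. Once the constants are aligned, all remaining manipulations are elementary Gaussian integration and algebra.
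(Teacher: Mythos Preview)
Your proposal is correct, and the overall strategy—Warren's conditional law for the CDF, then elementary integration for the mean—matches the paper's. The execution differs in two places worth noting.

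For the distribution function, the paper observes that Warren's formula (after L\'evy's identity $W_t+\sup_{s\le t}(-W_s)\stackrel{d}{=}|W_t|$) reduces directly to the one-dimensional expectation
\[
P_0[Z_t\le z]=E_0\!\left[e^{-2\theta(|W_t|-z)^+}\right]
=P_0(|W_t|<z)+\int_{z}^{\infty}e^{-2\theta(x-z)}\,\frac{2}{\sqrt{2\pi t}}e^{-x^2/(2t)}\,dx,
\]
which is a single Gaussian integral computed by completing the square. Your plan to integrate against the joint density of $(W_t,S_t)$ is correct but unnecessarily two-dimensional; the key shortcut you are missing is that Warren's functional depends on the pair only through the combination $W_t+\sup_{s\le t}(-W_s)$, whose marginal law is already known.

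For the mean, the paper goes the other way: it differentiates~\eqref{eq:distr_funct_sbm}, notices a cancellation that collapses the density on $(0,\infty)$ to the single term $F_Z'(z;t)=4\theta\,e^{2\theta(z+\theta t)}\Phi\!\left(-2\theta\sqrt t-z/\sqrt t\right)$, and then evaluates $\int_0^\infty z\,F_Z'(z;t)\,dz$ via Fubini. Your tail-integral route $\int_0^\infty(1-F_Z)\,dz$ with integration by parts is equally valid and arguably a touch cleaner, since it bypasses the differentiation step. The two computations are, of course, related by one integration by parts.
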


\begin{proof}
Fix $t\in (0,\infty)$ throughout the proof.
L\'evy's distributional theorem implies
$W_t+\sup_{s\in[0,t]}(-W_s)\stackrel{d}{=}|W_t|$
(see Theorem~VI.2.3 in \cite{RY}).
Then it follows from Theorem~1 in \cite{Warren97}
that for all $z\in [0,\infty)$ it holds
\begin{equation*}
\begin{split}
P_0[Z_t\le z]&=E_0\left[e^{-2\theta (W_t+\sup_{s\in[0,t]}(-W_s)-z)^+} \right]=E_0\left[1_{[0,z)}(|W_t|)+e^{-2\theta (|W_t|-z)}1_{[z,\infty)}(|W_t|) \right]\\
&=2\Phi\left(\frac{z}{\sqrt{t}} \right)-1+\frac{2}{\sqrt{2\pi}}\int_{z/\sqrt{t}}^\infty e^{-2\theta(\sqrt{t}y-z)}e^{-\frac{y^2}{2}}dy\\
&=2\Phi\left(\frac{z}{\sqrt{t}} \right)-1+\frac{2}{\sqrt{2\pi}}e^{2\theta(z+\theta t)}\int_{z/\sqrt{t}}^\infty e^{-\frac{(y+2\theta \sqrt{t})^2}{2}}dy\\
&=2\Phi\left(\frac{z}{\sqrt{t}} \right)-1+2e^{2\theta(z+\theta t)}\Phi\left(-2\theta \sqrt{t}-\frac{z}{\sqrt{t}}\right).
\end{split}
\end{equation*}
This proves \eqref{eq:distr_funct_sbm}. Moreover, this implies
that the density function of $Z_t$ starting in $0$ satisfies 
\begin{equation}
F_Z'(z;t)=\frac{2}{\sqrt{t}}\Phi'\left(\frac{z}{\sqrt{t}} \right)+4\theta e^{2\theta(z+\theta t)}\Phi\left(-2\theta \sqrt{t}-\frac{z}{\sqrt{t}}\right)
-\frac{2}{\sqrt{t}}e^{2\theta(z+\theta t)}\Phi'\left(-2\theta \sqrt{t}-\frac{z}{\sqrt{t}}\right)
\end{equation}
for all $z\in (0,\infty)$. 
Observe that for all $z\in (0,\infty)$ it holds 
\begin{equation}
\begin{split}
 e^{2\theta(z+\theta t)}\Phi'\left(-2\theta \sqrt{t}-\frac{z}{\sqrt{t}}\right)
&= \frac{1}{\sqrt{2\pi }}e^{2\theta(z+\theta t)}
e^{-\frac{\left(2\theta \sqrt{t}+\frac{z}{\sqrt{t}}\right)^2}{2}}
=  \frac{1}{\sqrt{2\pi }}
e^{-\frac{z^2}{2t}} =  \Phi'\left(\frac{z}{\sqrt{t}} \right).
\end{split}
\end{equation}
This implies for all $z\in (0,\infty)$ that
\begin{equation}\label{eq:05012020a2}
F_Z'(z;t)=4\theta e^{2\theta(z+\theta t)}\Phi\left(-2\theta \sqrt{t}-\frac{z}{\sqrt{t}}\right).
\end{equation}
This and Fubini's theorem prove that
\begin{equation}
\begin{split}
E_0[Z_t]&=\int_0^\infty z F_Z'(z;t) \, dz
=\frac{4\theta e^{2\theta^2t}}{\sqrt{2\pi}}\int_0^\infty z e^{2\theta z} \int_{-\infty}^{-2\theta \sqrt{t}-\frac{z}{\sqrt{t}}}e^{-\frac{y^2}{2}}\,dy \, dz\\
&=\frac{4\theta e^{2\theta^2t}}{\sqrt{2\pi}}\int_{-\infty}^{-2\theta \sqrt{t}} e^{-\frac{y^2}{2}} \int_0^{-y\sqrt{t}-2\theta t} ze^{2\theta z} \, dz \, dy \\
&=\frac{ e^{2\theta^2t}}{\theta \sqrt{2\pi}}\int_{-\infty}^{-2\theta \sqrt{t}} e^{-\frac{y^2}{2}} \left[1 -e^{-2\theta(y\sqrt{t}+2\theta t)}
(2\theta(y\sqrt{t}+2\theta t)+1) \right] \, dy\\
&=\frac{e^{2\theta^2 t}}{\theta}\Phi(-2\theta \sqrt{t})
-\frac{1}{\theta \sqrt{2\pi}}\int_{-\infty}^{-2\theta \sqrt{t}}e^{-\frac{(y+2\theta\sqrt{t})^2}{2}}
(2\theta\sqrt{t}(y+2\theta \sqrt{t})+1) \, dy\\
&=\frac{e^{2\theta^2 t}}{\theta}\Phi(-2\theta \sqrt{t})
-\frac{1}{\theta \sqrt{2\pi}}\int_{-\infty}^{0}e^{-\frac{y^2}{2}}
(2\theta\sqrt{t}y+1) \, dy\\
&=\frac{e^{2\theta^2 t}}{\theta}\Phi(-2\theta \sqrt{t})
-\frac{1}{2\theta}-\frac{2\sqrt{t}}{\sqrt{2\pi}}\int_{-\infty}^{0}ye^{-\frac{y^2}{2}}\, dy 
=\frac{e^{2\theta^2t}}{\theta}\Phi(-2\theta\sqrt{t})-\frac{1}{2\theta}+\sqrt{\frac{2t}{\pi}}.
\end{split}
\end{equation}
This completes the proof.
\end{proof}

\begin{remark}
Another possibility to get \eqref{eq:distr_funct_sbm}
and~\eqref{eq:05012020a1} is as follows.
An explicit formula for the transition density of a sticky Brownian motion
is provided in Part~I, Appendix~1, Section~8 of \cite{BS2002}.
This yields formula~\eqref{eq:05012020a2} for the density of~$Z_t$
(notice that the factor $4$, which is not present in \cite{BS2002},
is due to the facts that the mentioned formula in \cite{BS2002}
is given for a sticky Brownian motion on $\bbR$
and the transition density in \cite{BS2002} is given
with respect to the speed measure,
i.e., twice the Lebesgue measure outside zero).
Now \eqref{eq:05012020a1} follows by the same calculation as above,
while the distribution function~\eqref{eq:distr_funct_sbm}
can be recovered by integrating the density
and taking into account the atom at zero.
\end{remark}

\section{Brownian motion slowed down on the Cantor set}\label{sec:cantor_bm}
In this section we apply our results to construct a family of Markov chains $(X^h)_{h\in (0,1)}$ 
that converge in distribution to the general diffusion $Y$ on $\R$ with speed measure
$m(dx)=m_C(dx)+2\,dx$, where $m_C$ is the Cantor distribution. Such a process $Y$ can be understood as a Brownian motion
slowed down on the Cantor set.

For later reference we briefly recall a way to construct the Cantor distribution. 
To this end let $\mathcal C$ be the collection of all subsets of $[0,1]$ and 
let $\Psi\colon \mathcal C \to \mathcal C$ be the map given by
\begin{equation}
\Psi(A)=\frac{A}{3} \, \cup \frac{A+2}{3}, \quad A\subseteq [0,1].
\end{equation}
Next, we define recursively a sequence $(C_n)_{n\in \N_0}$
of subsets of $[0,1]$. Let $C_0=[0,1]$ and for $n\in \N$ let
\begin{equation}
C_n=\Psi(C_{n-1}).
\end{equation}
The Cantor set is defined as $C=\cap_{n\in \N}C_n$.

We define for all $n\in \N$ the probability measure $m_n$ on
$(\R, \mathcal B(\R))$ by
$m_n(dx)=\left(\frac{3}{2}\right)^n 1_{C_n}(x)\,dx$. Note that $m_n$ is absolutely continuous with respect to the Lebesgue measure $\mu_L$. It follows from the proof of Theorem~3.1 in \cite{Schmidt91}
that the sequence $(m_n)_{n\in \N}$ converges in distribution to a probability measure $m_C$
on $(\bbR,\cB(\bbR))$
and that for all $n\in \N$ it holds
\begin{equation}\label{eq:uniform_conv_cantor}
\sup_{x\in\bbR}|m_C((-\infty,x])-m_n((-\infty,x])|\le 2^{-(n-1)}.
\end{equation}
Moreover, it holds that $m_C(C)=1$
(in particular, $m_C$ is concentrated on $[0,1]$),
$\mu_L(C)=0$ and, for all $x\in \R$,
$m_C(\{x\})=0$, i.e., $m_C$ is a singular-continuous measure.

\begin{propo}\label{prop:21122018a1}
Let $m$ be the measure on $\R$ given by $m(dx)=m_C(dx)+2\,dx$ and let $Y$ be the associated diffusion. Let $n\colon (0,1)\to \N$ be a function satisfying 
$\lim_{h\to 0}2^{n(h)}{\sqrt{h}}=\infty$. 
Then there exists for all $h\in (0,1)$, $y\in \R$ a unique solution $a_h(y)\in (0,\sqrt{h}]$ of the equation
\begin{equation}\label{eq:eq_sf_cantor}
\frac{1}{2}\left(\frac{3}{2}\right)^{n(h)}\int_{y-a_h(y)}^{y+a_h(y)} 1_{C_{n(h)}}(u)(a_h(y)-|u-y|)du
+a^2_h(y)
=h.
\end{equation}
Let $(X^h)_{h\in (0,1)}$ be the family of Markov chains defined in
\eqref{eq:def_X} and
 \eqref{eq:13112017a1} (with scale factors $a_h$, $h\in(0,1)$, given by the solution of \eqref{eq:eq_sf_cantor}). Then for all $y\in \R$ the distributions of $(X^{h,y}_t)_{t\in [0,\infty)}$, $h\in (0,1)$, under $P$ converge weakly to the distribution of $(Y_t)_{t\in [0,\infty)}$ under $P_y$, as $h\to 0$.
\end{propo}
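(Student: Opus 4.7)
The plan is to verify Condition~(A) for the family $(a_h)_{h\in(0,1)}$ defined implicitly by \eqref{eq:eq_sf_cantor} and then invoke Theorem~\ref{main thm sec 1}. The proof splits naturally into an existence/uniqueness step for $a_h(y)$ and a Kolmogorov-Smirnov-type error estimate that exploits \eqref{eq:uniform_conv_cantor}.

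First, for each $h\in(0,1)$ and $y\in\R$ I would introduce
\[
F_{h,y}(a)=\frac12\left(\frac{3}{2}\right)^{n(h)}\int_{y-a}^{y+a}\mathbf{1}_{C_{n(h)}}(u)(a-|u-y|)\,du+a^2,\quad a\in[0,\sqrt h],
\]
and observe that $F_{h,y}$ is continuous with $F_{h,y}(0)=0$ and $F_{h,y}(\sqrt h)\ge h$ (the Lebesgue part alone already contributes $h$); moreover $a\mapsto a^2$ is strictly increasing while the first summand is non-decreasing in $a$. Consequently there is a unique $a_h(y)\in(0,\sqrt h]$ with $F_{h,y}(a_h(y))=h$, and this $a_h(y)$ automatically satisfies $a_h(y)\le\sqrt h$.

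Second, writing $m=m_C+2\lambda$ and $a=a_h(y)$, a direct computation using $\int_{y-a}^{y+a}(a-|u-y|)\,du=a^2$ and the defining equation \eqref{eq:eq_sf_cantor} gives
\[
\frac12\int(a-|u-y|)^+\,m(du)-h
=\frac12\int(a-|u-y|)^+(m_C-m_{n(h)})(du),
\]
where $m_{n(h)}(du)=(3/2)^{n(h)}\mathbf{1}_{C_{n(h)}}(u)\,du$. The test function $\phi_{a,y}(u)=(a-|u-y|)^+$ is Lipschitz with compact support in $[y-a,y+a]$ and $\|\phi'_{a,y}\|_{L^1}=2a$. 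Since $m_C$ and $m_{n(h)}$ are both probability measures, their CDFs $M_C$ and $M_{n(h)}$ agree at $\pm\infty$, so integration by parts yields
\[
\left|\int\phi_{a,y}\,d(m_C-m_{n(h)})\right|
=\left|\int\phi'_{a,y}(u)(M_C(u)-M_{n(h)}(u))\,du\right|
\le 2a\cdot\|M_C-M_{n(h)}\|_\infty.
\]
Combining this with \eqref{eq:uniform_conv_cantor} and $a\le\sqrt h$ gives the uniform-in-$y$ bound
\[
\sup_{y\in\R}\left|\tfrac12\int(a_h(y)-|u-y|)^+\,m(du)-h\right|\le\frac{2\sqrt h}{2^{n(h)}},
\]
and the hypothesis $2^{n(h)}\sqrt h\to\infty$ is exactly what is needed to make the right-hand side $o(h)$ as $h\to0$. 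In particular, Condition~(A) holds on every compact $K\subset\R$, and the weak convergence claim follows from Theorem~\ref{main thm sec 1}.

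The main technical obstacle I anticipate is the integration-by-parts step: one must be careful that the boundary terms vanish and that the Kolmogorov-Smirnov bound of \eqref{eq:uniform_conv_cantor} can be leveraged against a test function whose Lipschitz constant is $1$ but whose amplitude shrinks like $\sqrt h$. Once this is handled, the uniformity in $y$ is free because the bound $\tfrac{2\sqrt h}{2^{n(h)}}$ does not depend on $y$, so one does not even need to restrict to compact subsets.
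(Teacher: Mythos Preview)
Your proof is correct and follows essentially the same route as the paper: both arguments reduce the error $\frac12\int(a_h(y)-|u-y|)^+\,m(du)-h$ to $\frac12\int(a_h(y)-|u-y|)^+\,(m_C-m_{n(h)})(du)$, convert this into an integral involving the CDF difference $M_C-M_{n(h)}$ (the paper via the identity $\int_{(y-a,y+a)}(a-|u-y|)\,\nu(du)=\int_y^{y+a}\nu((y,u))\,du+\int_{y-a}^y\nu((u,y))\,du$, you via integration by parts against $\phi'_{a,y}$), and then apply \eqref{eq:uniform_conv_cantor} together with $a_h(y)\le\sqrt h$ to obtain the uniform bound $\sqrt h\,2^{-(n(h)-1)}=2\sqrt h/2^{n(h)}\in o(h)$. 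The existence/uniqueness argument and the final appeal to Theorem~\ref{main thm sec 1} are likewise identical in spirit.
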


\begin{proof}
First observe that for all $y\in \R$ the mapping
\begin{equation}
[0,\infty) \ni a\mapsto \frac{1}{2}\left(\frac{3}{2}\right)^{n(h)}\int_{y-a}^{y+a} 1_{C_{n(h)}}(u)(a-|u-y|)du
+a^2 \in [0,\infty)
\end{equation}
is continuous and strictly increasing. This ensures existence of a unique solution $a_h(y)\in[0,\infty)$ of \eqref{eq:eq_sf_cantor}. It follows from
\begin{equation}
a^2_h(y)\le \frac{1}{2}\left(\frac{3}{2}\right)^{n(h)}\int_{y-a_h(y)}^{y+a_h(y)} 1_{C_{n(h)}}(u)(a_h(y)-|u-y|)du
+a^2_h(y)
=h
\end{equation}
that $a_h(y)\le \sqrt{h}$ for all $h\in (0,1)$, $y\in \R$. Moreover, it follows that for all $h\in (0,1)$ and $y\in \R$ it holds
\begin{equation}\label{eq:sf_cantor_aux1}
\begin{split}
&\frac{1}{2}\int_{(y-a_h(y),y+a_h(y))} (a_h(y)-|u-y|)\,(m_{n(h)}+2)(du)\\
&=
\frac{1}{2}\left(\frac{3}{2}\right)^{n(h)}\int_{y-a_h(y)}^{y+a_h(y)} 1_{C_{n(h)}}(u)(a_h(y)-|u-y|)\,du
+a^2_h(y)=h.
\end{split}
\end{equation}
Next, observe that formula~\eqref{eq:03012020a2}, definition~\eqref{eq:def_q} of $q$ and the fact that $m_C$ and $m_{n(h)}$ do not possess atoms ensure that it holds for all $h\in (0,1)$ and $y\in \R$ that
\begin{equation}
\begin{split}
&
\int_{(y-a_h(y),y+a_h(y))} (a_h(y)-|u-y|)\,(m_C-m_{n(h)})(du)\\
&=
\int_y^{y+a_h(y)}\left[m_C((y,u))-m_{n(h)}((y,u))\right]\,du+
\int_{y-a_h(y)}^y\left[m_C((u,y))-m_{n(h)}((u,y))\right]\,du
\\
&=
\int_y^{y+a_h(y)}\left[m_C((-\infty,u])-m_{n(h)}((-\infty,u])\right]\,du-
\int_{y-a_h(y)}^y\left[m_C((-\infty,u])-m_{n(h)}((-\infty,u])\right]\,du.
\end{split}
\end{equation}
This, \eqref{eq:uniform_conv_cantor} and the fact that $a_h(y)\le \sqrt{h}$ show that,
for all $h\in (0,1)$ and $y\in \R$, we have
\begin{equation}\label{eq:sf_cantor_aux2}
\begin{split}
&
\left|
\frac{1}{2}
\int_{(y-a_h(y),y+a_h(y))} (a_h(y)-|u-y|)\,(m_C-m_{n(h)})(du)
\right|
\\
&\le a_h(y)
\sup_{u\in\bbR} \left| m_C((-\infty,u])-m_{n(h)}((-\infty,u]) \right|
\le \sqrt{h} 2^{-(n(h)-1)}.
\end{split}
\end{equation}
Combining \eqref{eq:sf_cantor_aux1} and \eqref{eq:sf_cantor_aux2} and using the assumption $\lim_{h\to 0}2^{n(h)}{\sqrt{h}}=\infty$ shows that 
\begin{equation}
\begin{split}
&\sup_{y\in \R}\left| 
\frac{1}{2}\int_{(y-a_h(y),y+a_h(y))} (a_h(y)-|u-y|)\,m(du)-h
\right|\\
&\le 
\sup_{y\in \R} \left| 
\frac{1}{2}\int_{(y-a_h(y),y+a_h(y))} (a_h(y)-|u-y|)\,(m_C-m_{n(h)})(du)\right|\\
&\qquad+
\sup_{y\in \R}\left|\frac{1}{2}\int_{(y-a_h(y),y+a_h(y))} (a_h(y)-|u-y|)\,(m_{n(h)}+2)(du)-h
\right|\\
&=\sup_{y\in \R} \left| 
\frac{1}{2}\int_{(y-a_h(y),y+a_h(y))} (a_h(y)-|u-y|)\,(m_C-m_{n(h)})(du)\right|\in o(h).
\end{split}
\end{equation}
Hence, Condition~(A) is satisfied and weak convergence of $X^h$ to $Y$ follows from Theorem~\ref{main thm sec 1}.
\end{proof}

Proposition~\ref{prop:21122018a1}
provides the way to simulate approximations
of the Brownian trajectories
slowed down on the Cantor set
(see Figure~\ref{fig:cbm}).

\begin{figure}[htb!]
\centering
\includegraphics[width=0.49\textwidth]{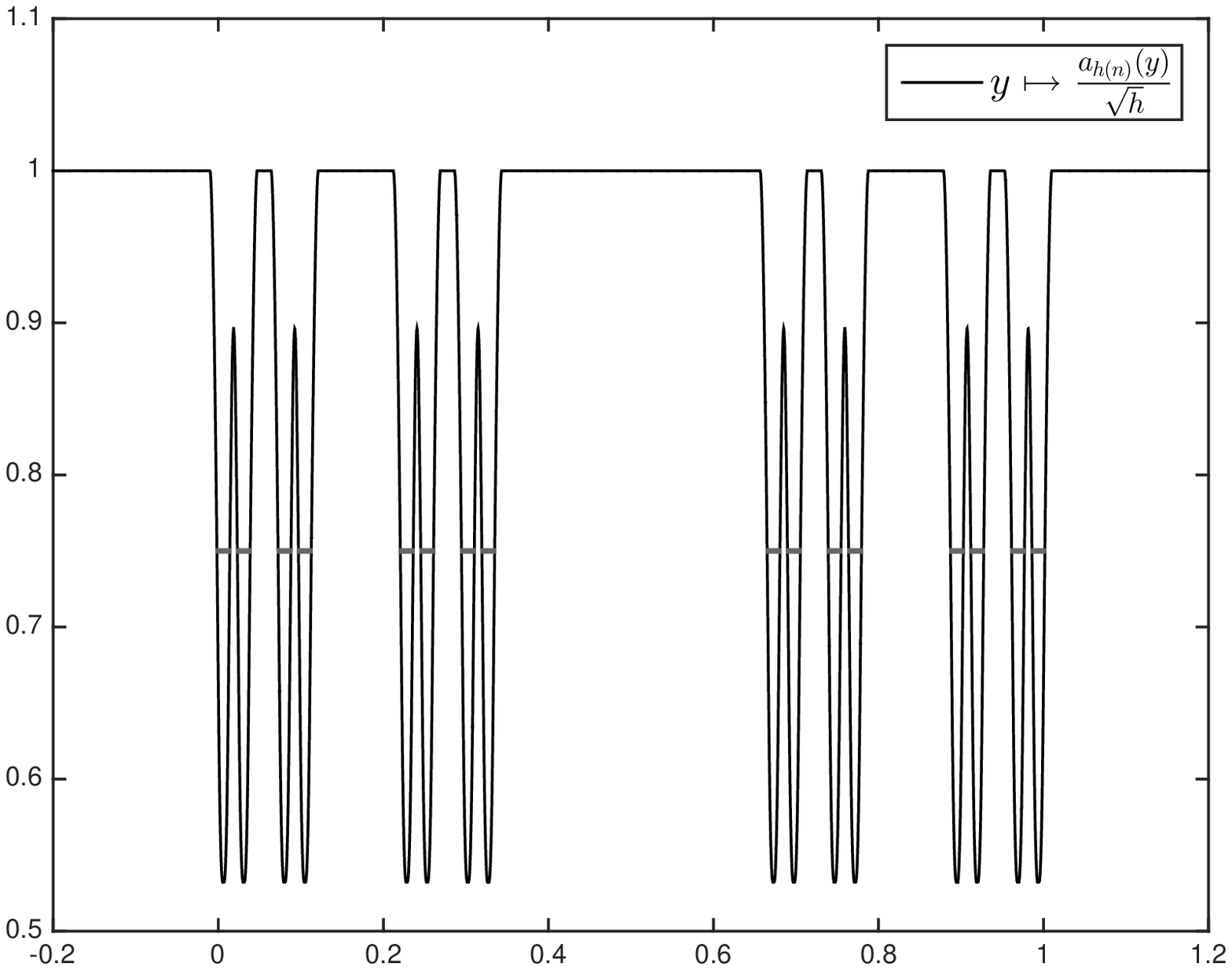}
\includegraphics[width=0.49\textwidth]{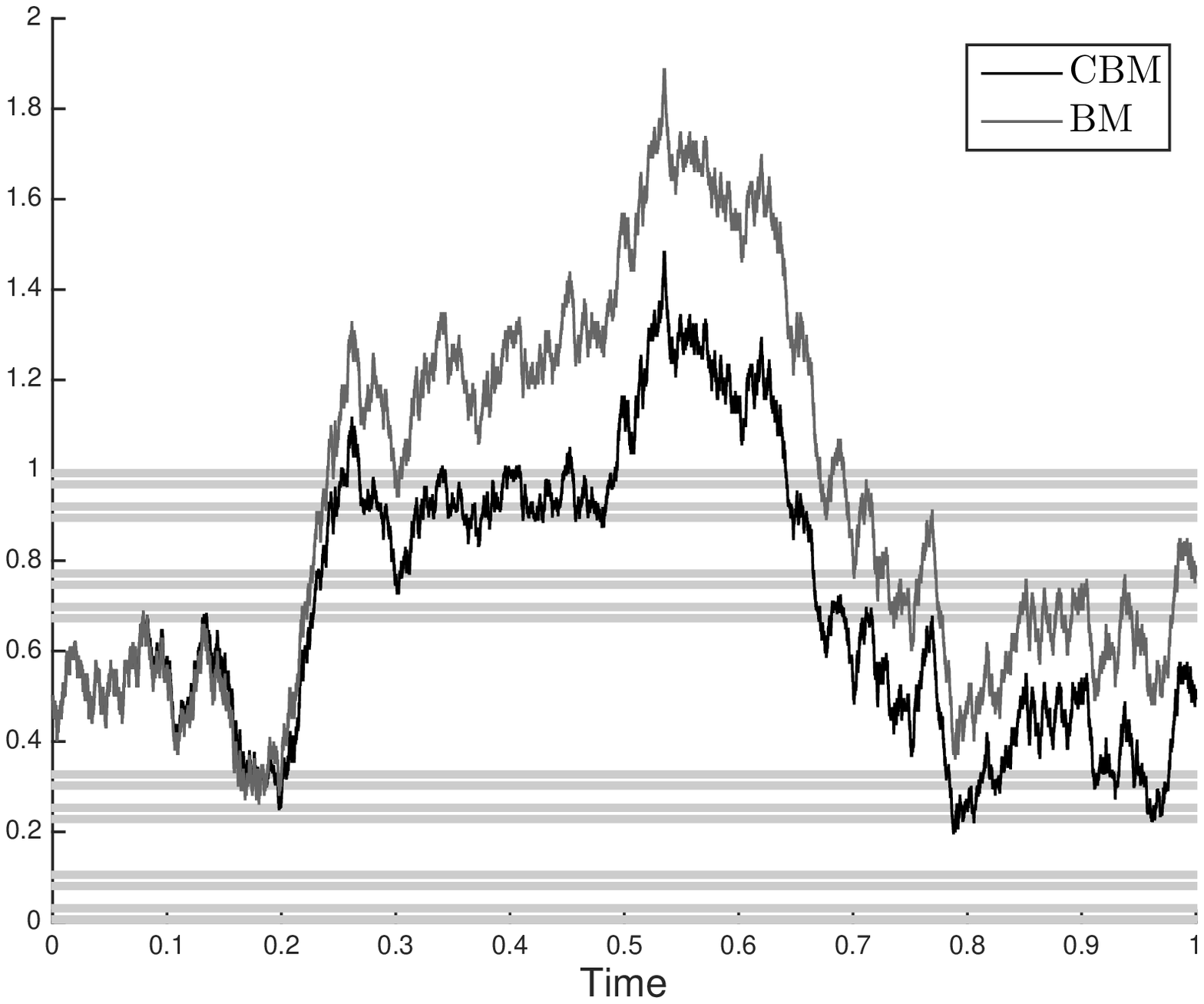}
{\caption{\footnotesize Left: Normalized scale factor for the approximation of the Brownian motion slowed down on the Cantor set. The black line depicts the function $\R \ni y\mapsto \frac{a_{n(h)}(y)}{\sqrt{h}}\in \R$, where for all $y\in \R$ the real number $a_{n(h)}(y)$ is the solution of \eqref{eq:eq_sf_cantor} with $h=10^{-4}$ and $n(h)=4$. The gray bars depict the 16 subintervals of the set  $C_{n(h)}$. We see that the black line
interpolates smoothly between 1 and considerably smaller values around the gray bars. 
Right: A realization of an approximation of the Brownian motion slowed down on the Cantor set (CBM) and of a Brownian motion (BM). The black line depicts a trajectory of $(X_t^h)_{t\in [0,1]}$ defined in
\eqref{eq:def_X} and
 \eqref{eq:13112017a1} with scale factor $a$ given by the solution of \eqref{eq:eq_sf_cantor} with $h=10^{-4}$ and $n(h)=4$. The gray line shows a realization of $(X_t^h)_{t\in [0,1]}$ defined in
\eqref{eq:def_X} and~\eqref{eq:13112017a1} with scale factor $a_h$ given by $a_h\equiv \sqrt{h}$ and $h=10^{-4}$. Both trajectories are generated from the same sample of random increments $(\xi_k)_{k\in \N}$. The horizontal light gray bars show again the 16 subintervals of the set  $C_{n(h)}$.
We see that on the set $C_{n(h)}$
the CBM is indeed slowed down (compared with the BM).}\label{fig:cbm}}
\end{figure}

\paragraph{Acknowledgement}
We thank the anonymous referee for comments that helped improve the exposition.
Thomas Kruse and Mikhail Urusov acknowledge
the support from the
\emph{German Research Foundation}
through the project 415705084.

\bibliographystyle{abbrv}
\bibliography{literature}
\end{document}